\documentclass[11pt,reqno]{amsart}

\usepackage[text={160mm,240mm},centering]{geometry}            % See geometry.pdf to learn the layout options. There are lots.
\usepackage{tikz}
\usepackage{tikz-3dplot}
\usetikzlibrary{arrows.meta}%画箭头，并非必需
\usetikzlibrary{commutative-diagrams}%交换图
\usetikzlibrary{cd}
\usepackage{amssymb}
\usepackage{amsmath}
\usepackage{longtable}
\usepackage{textcomp}
\newtheorem{theorem}{Theorem}[section]
\newtheorem{lemma}[theorem]{Lemma}
\newtheorem{proposition}[theorem]{Proposition}

\newtheorem{conjecture}[theorem]{Conjecture}
\newtheorem{corollary}[theorem]{Corollary}

\theoremstyle{definition}
\newtheorem{definition}[theorem]{Definition}

\theoremstyle{remark}
\newtheorem{remark}[theorem]{Remark}

\numberwithin{equation}{section}

\begin{document}

\title[hypersurface singularities]{The Quotient of Milnor number by Tjurina Number of Hypersurface Singularities in Arbitrary Characteristic}

%    Information for first author
\author{Hongrui Ma}
%    Address of record for the research reported here
\address{Department of Mathematical Sciences,
Tsinghua University, Beijing, 100084, P. R. China.}
%    Current address
%\curraddr{Department of Mathematics and Statistics,
%Case Western Reserve University, Cleveland, Ohio 43403}
\email{mhr23@mails.tsinghua.edu.cn}
%    \thanks will become a 1st page footnote.
%\thanks{The first author was supported in part by NSF Grant \#000000.}

\author{Huaiqing Zuo}
\address{Department of Mathematical Sciences,
Tsinghua University,
Beijing, 100084, P. R. China.}
\email{hqzuo@mail.tsinghua.edu.cn}
\thanks{Zuo is supported by NSFC Grant 12271280 and BJNSF Grant 1252009}
%\thanks{Zuo is supported by NSFC Grant 12271280 and BJNSF Grant 1252009}

%    General info
%\subjclass[2020]{Primary 54C40, 14E20; Secondary 46E25, 20C20}

%\date{January 1, 2001 and, in revised form, June 22, 2001.}

%\dedicatory{This paper is dedicated to our advisors.}

%\keywords{algebraic geometry, singularity theory}

\begin{abstract}
In this paper, we use Hilbert-Samuel multiplicity, Hilbert-Kunz multiplicity, and s-multiplicity to establish a sharp upper bound for the quotient of the generalized Milnor numbers and the Tjurina numbers for isolated hypersurface singularities of any dimension in positive characteristic. Using this result, we also derive an upper bound for the quotient of the Milnor numbers $\mu$ and the Tjurina numbers $\tau$ for isolated hypersurface singularities of any dimension in characteristic zero. In particular, as a corollary, we obtain that for an isolated surface singularity $(f,0) \subset (\mathbb{C}^3,0)$, $\frac{\mu(f)}{\tau(f)}\leq \frac{3}{2}$, which partially answers a conjecture of P. Almir\'{o}n, replacing the original strict inequality $<$ by $\leq$. This is also a weak version of Durfee's conjecture. We have also constructed a family of hypersurface singularities of any dimension for which $\frac{\mu}{\tau}$ tends to the bound we get, which means that the bound is sharp, and at the same time answers an open problem raised by P. Almir\'{o}n.

Keywords. isolated hypersurface singularity, Milnor number, Tjurina number, positive characteristic.
	
\end{abstract}

\maketitle

\section{introduction}
By a hypersurface singularity at the origin over $K$ we mean a power series $f \in \mathfrak{m}^2 \subset R=K[[x_1,...,x_n]]$, where $K$ is a field of arbitrary characteristic and $\mathfrak{m}$ is the maximal ideal of $R$. One of the most important things people care about are the {\em Milnor number} and the {\em Tjurina number}, which are given by the {\em Jacobian ideal} $j(f)=(\frac{\partial f}{\partial x_1},\dots,\frac{\partial f}{\partial x_n})$ and {\em Tjurina ideal} $tj(f)=(f,\frac{\partial f}{\partial x_1},\dots,\frac{\partial f}{\partial x_n})$. The Milnor number $\mu(f)$ and Tjurina number $\tau(f)$ are defined as
\begin{equation}\label{def-mu-tau}
    \mu(f)=\dim_K R/j(f) \quad \mathrm{and} \quad \tau(f)=\dim_K R/tj(f).
\end{equation}
The hypersurface singularity $f$ is isolated if and only if $\tau(f)< \infty$, while in characteristic zero, $\tau(f)<\infty$ is equivalent to $\mu(f)<\infty$.

For a long time, people have been paying attention to the relationship between the Milnor number and the Tjurina number of an isolated hypersurface singularity. The inequality $\frac{\mu}{\tau} \geq 1$ follows easily from the definition. In 1971, K. Saito \cite{saito-mu-tau} showed that for an isolated hypersurface singularity $f$ over $\mathbb{C}$, $\frac{\mu}{\tau}=1$ if and only if $f$ is analytically equivalent to a weighted homogeneous polynomial. For the upper bound of $\frac{\mu}{\tau}$, Y. Liu \cite{liuyongqiang} first gave a result
\begin{equation}\label{bound-liuyongqiang}
    \frac{\mu}{\tau} \leq n
\end{equation}
over $\mathbb{C}$. This work is related to the work of J. Briançon, H. Skoda \cite{skoda}, in which they defined the Briançon-Skoda exponent as $$e^{BS}(f)=\min\{e \in \mathbb{N} \mid f^e \in j(f)\} $$ and showed that $e^{BS}(f) \leq n$ over $\mathbb{C}$. Using the same method as \cite{liuyongqiang}, one can show that in the complex field, $$1 \leq \frac{\mu}{\tau}<e^{BS}(f).$$ However, this bound is far from sharp.

The sharper bound was first conjectured by A. Dimca and G.M. Greuel \cite{greuel-conj} in 2018:
\begin{conjecture}\label{conj-4/3}
    $\frac{\mu}{\tau}<\frac{4}{3}$ for any isolated hypersurface singularity $f \in \mathbb{C}[[x,y]]$.
\end{conjecture}
They also construct a sequence of plane curve singularities that asymptotically achieve the $\frac{4}{3}$ bound. Subsequently, this conjecture was proved in the case of irreducible plane curve and semi‑quasi‑homogeneous singularities. In 2022, P. Almir\'{o}n \cite{quotient-4/3} proved Conjecture \ref{conj-4/3} completely using formulas for the geometric genus. He also posted a new conjecture of dimension $2$:
\begin{conjecture}\label{conj-3/2}
    $\frac{\mu}{\tau}<\frac{3}{2}$ for any isolated hypersurface singularity $f \in \mathbb{C}[[x,y,z]]$.
\end{conjecture}
Interestingly, Conjecture \ref{conj-3/2} is a weak version of the following Conjecture \ref{conj-durfee} (still open), which is known as Durfee's conjecture (see Proposition \ref{durfee-3/2}).
\begin{conjecture}[\cite{conj-durfee}]\label{conj-durfee}
    For an isolated hypersurface singularity $f \in \mathbb{C}[[x,y,z]]$, $$6p_g \leq \mu,$$ with equality only when $\mu=0$, where $p_g$ is the geometric genus of the singularity.
\end{conjecture}
The study of Conjecture \ref{conj-3/2} is highly dependent on Conjecture \ref{conj-durfee}. As a weak version, since Conjecture \ref{conj-durfee} holds for singularities of one of the following types: (i) multiplicity 3, (ii) absolutely isolated singularity, (iii) suspension of the type $\{f(x,y)+z^n\}$, (iv) the link of the singularity is an integral homology sphere, (v) the topological Euler characteristic of the exceptional divisor of the minimal resolution is positive (see \cite{durfee-i,durfee-ii,durfee-iii,durfee-iv,durfee-v,durfee-vi}), Conjecture \ref{conj-3/2} holds under the same condition.

Although there have been many works on the upper bound for $\frac{\mu}{\tau}$, these works are all based on fields of characteristic zero. The main reason is that for isolated hypersurface singularities in positive characteristic, it is possible that $\mu=\infty$. In 2019, A. Hefez, J. H. O. Rodrigues, and R. Salom\~ao \cite{mu-O} generalized the Milnor number as
$$\mu(\mathcal{O}_f)=\min \{\mu(uf)\mid u \in R^{\times}\},$$ where $R^\times$ means the units of $R$. $\mu(\mathcal{O}_f)$ is always finite and coincides with $\mu(f)$ in characteristic zero. The generalized Milnor number allows us to discuss the quotient of Milnor number and Tjurina number in positive characteristic.

In this paper, we get a better bound the quotient of the Milnor number and the Tjurina number for any dimension in positive characteristic with the help of Hilbert-Samuel multiplicity, Hilbert-Kunz multiplicity and $s$-multiplicity. Then we deduce the result to fields of characteristic zero. In conclusion, we get

\begin{theorem}\label{introduction-main-thm}
    Let $f \in R=K[[x_1,\dots,x_n]]$ define an isolated hypersurface singularity at the origin over fields of arbitrary characteristic. Then we have
    \begin{equation}\label{introduction-main-bound}
        \frac{\mu(\mathcal{O}_f)}{\tau(f)} \leq \frac{1}{\mathcal{H}_{\frac{n+1}{2}}(n)-\mathcal{H}_{\frac{n-1}{2}}(n)},
    \end{equation}
    where $\mathcal{H}_s(n)=\sum_{i=0}^{\lfloor s \rfloor}\frac{(-1)^i}{n!}\binom{n}{i}(s-i)^n$ for $0<s \leq n$.
\end{theorem}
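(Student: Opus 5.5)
The plan is to work first in characteristic $p>0$ and then transfer to characteristic zero by reduction modulo $p$. In characteristic $p$, I first choose a unit $u\in R^{\times}$ with $\mu(uf)=\mu(\mathcal{O}_f)<\infty$ and replace $f$ by $uf$. This is harmless because $tj(uf)=tj(f)$, so $\tau$ does not change. Write $J=j(f)$. It is an $\mathfrak m$-primary ideal generated by $n$ elements, hence a parameter ideal in the regular ring $R$, and $\mu(\mathcal{O}_f)=\ell(R/J)=e(J)$ is its Hilbert–Samuel multiplicity. On the other side, $\tau(f)=\ell(R/(J+(f)))$. Since $R$ is regular, Frobenius is flat, so colength equals Hilbert–Kunz multiplicity, and $\tau(f)=e_{HK}(J+(f))$. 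The statement therefore reduces to the multiplicity inequality
\[
e_{HK}\bigl(J+(f)\bigr)\;\geq\;\bigl(\mathcal{H}_{\frac{n+1}{2}}(n)-\mathcal{H}_{\frac{n-1}{2}}(n)\bigr)\, e(J).
\]

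The structural input is the position of $f$ relative to $J$: $f$ lies in the integral closure $\overline{\mathfrak m J}$, and in particular $f\in\overline{J}$. This is the Teissier/Briançon–Skoda type statement; in positive characteristic it should follow from valuative arguments once $\mu(f)<\infty$. So $J\subseteq J+(f)\subseteq \overline J$, and $J$ is a minimal reduction of $I:=J+(f)$.

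I would then bound $e_{HK}(I)/e(I)$ from below using the $s$-multiplicity, interpolating between $e$ and $e_{HK}$. The model case is $I=(x_1,\dots,x_n,\,g)$, where $g$ behaves like an element of "degree $\tfrac{n+1}{2}$" in the normalized $x$-weights. In that model the normalized count of monomials in $R/(J^{[q]},f^{q})$ is the volume of the part of the unit cube cut out by the slab $\frac{n-1}{2}\le\sum t_i\le\frac{n+1}{2}$, which is exactly $\mathcal{H}_{\frac{n+1}{2}}(n)-\mathcal{H}_{\frac{n-1}{2}}(n)$.

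\textbf{Main obstacle.} The hard step is proving that the model case is extremal for every $f\in\overline{\mathfrak m J}$. I would try to degenerate to an initial (monomial/graded) situation with respect to a Rees valuation of $J$, in the spirit of the Watanabe–Yoshida lower bounds for Hilbert–Kunz multiplicities. There one counts lattice points in $[0,q)^n$ avoiding the multiples of $f^q$, and shows that the worst case is when the constraint from $f$ is a half-space through the centre of the cube. Concretely, the count is $\ell\bigl(R/(J^{[q]}:f^q)\bigr)\le \ell(R/J^{[q]})-\ell\bigl(R/(J^{[q]}+f^qR)\bigr)$, combined with the Gorenstein duality $\ell(A/fA)=\ell(0:_A f)$ for $A=R/J^{[q]}$. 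Dividing by $q^n$ and letting $q\to\infty$ should give the slab volume as the minimal normalized value of $e_{HK}(I)/e(J)$.

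For characteristic zero, I would take a model of $f$ over a finitely generated $\mathbb Z$-algebra and reduce modulo a large prime $p$. For all but finitely many $p$ this preserves $\mu$ and $\tau$, and $\mu=\mu(\mathcal O_f)$ there, so the bound passes to characteristic zero unchanged.
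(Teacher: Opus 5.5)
Your reduction is correct and matches the paper's. After passing to a minimizing unit multiple, $\mu(\mathcal O_f)=e(J)$ and $\tau(f)=e_{HK}(I)$ with $I=J+(f)$, and $J$ is a reduction of $I$. Everything then comes down to $e_{HK}(I)\ge\bigl(\mathcal H_{\frac{n+1}{2}}(n)-\mathcal H_{\frac{n-1}{2}}(n)\bigr)e(J)$.

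The gap is that this inequality, which is the entire content of the theorem, is never proved. The model case and the degeneration along a Rees valuation are heuristics that are neither made precise nor carried out. The claim that ``the worst case is a half-space through the centre'' is exactly what has to be shown. The one concrete formula you give carries no information. By the exact sequence $0\to(J^{[q]}:f^q)/J^{[q]}\to R/J^{[q]}\xrightarrow{f^q}R/J^{[q]}\to R/(J^{[q]}+f^qR)\to 0$ it is an equality; this holds in any Artinian ring, and Gorenstein duality plays no role. By flatness of Frobenius it is just $q^n$ times the identity $\ell(R/(J:f))=\mu(\mathcal O_f)-\tau(f)$.

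The missing idea is a simple truncation in a filtration where $f$ has degree one, namely the $I$-adic one. Since $f^qI^{\lceil(t-1)q\rceil}\subseteq I^{\lceil tq\rceil}$, the cyclic module $(I^{[q]}+I^{\lceil tq\rceil})/(J^{[q]}+I^{\lceil tq\rceil})$, generated by $f^q$, is a quotient of $R/(J^{[q]}+I^{\lceil(t-1)q\rceil})$. Hence
\[
\tau(f)\,q^n=\ell(R/I^{[q]})\ \ge\ \ell\bigl(R/(J^{[q]}+I^{\lceil tq\rceil})\bigr)-\ell\bigl(R/(J^{[q]}+I^{\lceil(t-1)q\rceil})\bigr).
\]
Because $J$ is a reduction of $I$, each $I^k$ can be replaced by $J^k$ at a cost of $O(q^{n-1})$. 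Moreover $\ell(R/(J^{[q]}+J^k))=e(J)\cdot\#\{a\in\mathbb N^n : a_i<q,\ |a|<k\}$, since $R$ is free of rank $e(J)$ over $K[[g_1,\dots,g_n]]$ for $J=(g_1,\dots,g_n)$. Dividing by $q^n$ yields $\tau(f)\ge(\mathcal H_t(n)-\mathcal H_{t-1}(n))\,\mu(\mathcal O_f)$ for all $1\le t\le n+1$.

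The paper gets exactly this from Miller--Taylor's bound (Theorem~\ref{lower-bound}). It uses $r=1$, since $I/J$ is cyclic, and $s=n+1$, where $e_{n+1}(I)=e_{HK}(I)$ because $I$ has $n+1$ generators. It then takes $t=\frac{n+1}{2}$.

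Smaller points:

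\begin{enumerate}
\item Your claim $f\in\overline{\mathfrak mJ}$ is false in characteristic $p$. For $f=x^p+y^p+x^{p+1}+y^{p+1}$ one has $J=(x^p,y^p)$ and $\mu(f)=\mu(\mathcal O_f)=p^2$, yet $\overline{\mathfrak mJ}=\mathfrak m^{p+1}\not\ni f$.
\item The membership $f\in\overline J$ does not follow from $\mu(f)<\infty$. For odd $p$, $g=(1+x^2)(x^p+y^{p+1})$ has $j(g)=(x^{p+1},y^p)$, so $\mu(g)=p^2+p$, but $g\notin\overline{j(g)}$. The membership holds because you chose a minimizing unit, by Hefez--Rodrigues--Salom\~ao, and that is all you need.
\item In the passage to characteristic zero, the step ``$\mu=\mu(\mathcal O_f)$ there'' needs proof, because $\mu(\mathcal O_{f(\mathfrak p)})\le\mu(f(\mathfrak p))$ points the wrong way. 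The paper secures it with the null-form criterion for a generic linear unit $a_0+a_1x_1+\dots+a_nx_n$.
\end{enumerate}
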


Choosing $K$ to be a field of characteristic zero, we get
\begin{corollary}
    Let $f \in R=K[[x_1,\dots,x_n]]$ define an isolated hypersurface singularity at the origin over fields of characteristic zero. Then we have
    \begin{equation}\label{introduction-main-bound-C}
        \frac{\mu(f)}{\tau(f)} \leq \frac{1}{\mathcal{H}_{\frac{n+1}{2}}(n)-\mathcal{H}_{\frac{n-1}{2}}(n)}.
    \end{equation}
\end{corollary}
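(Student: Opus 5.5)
The corollary should follow directly from Theorem \ref{introduction-main-thm} by specializing to $\operatorname{char} K = 0$ and identifying $\mu(\mathcal{O}_f)$ with $\mu(f)$. So I would prove it in two short steps.

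\emph{Step 1: the theorem applies.} The hypothesis is that $f$ defines an isolated hypersurface singularity over a field of characteristic zero. Theorem \ref{introduction-main-thm} is stated over fields of arbitrary characteristic, so its bound \eqref{introduction-main-bound} holds verbatim:
\[
\frac{\mu(\mathcal{O}_f)}{\tau(f)} \le \frac{1}{\mathcal{H}_{\frac{n+1}{2}}(n)-\mathcal{H}_{\frac{n-1}{2}}(n)}.
\]

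\emph{Step 2: $\mu(\mathcal{O}_f)=\mu(f)$ in characteristic zero.} This is the only real content. I would argue it as follows.

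Let $u\in R^\times$. The Milnor number is a contact invariant in characteristic zero, so $\mu(uf)=\mu(f)$. One way to see this: $\mu(g)$ equals the intersection multiplicity of the partials of $g$, and it is a topological invariant of the Milnor fibre when $K=\mathbb{C}$. For a general field of characteristic zero, reduce to $\mathbb{C}$ by the Lefschetz principle; only finitely many coefficients matter, thanks to finite determinacy of isolated singularities.

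A more algebraic route uses the fact that in characteristic zero $f\in \overline{j(f)}$, the integral closure of the Jacobian ideal. One then checks that $j(uf)$ and $j(f)$ share a common reduction up to terms in $\mathfrak{m}\, j$. From this, $e(j(uf))=e(j(f))$, and since both ideals are generated by a system of parameters, $\mu=e$.

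The cleanest citation is the statement already recorded in the introduction: $\mu(\mathcal{O}_f)$ coincides with $\mu(f)$ in characteristic zero \cite{mu-O}. I would simply invoke it. Since $\mu(uf)$ is constant in $u$, the minimum defining $\mu(\mathcal{O}_f)$ equals $\mu(f)$. It is also finite, because $\tau(f)<\infty$ is equivalent to $\mu(f)<\infty$ in characteristic zero.

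Substituting $\mu(\mathcal{O}_f)=\mu(f)$ into the inequality of Step 1 gives \eqref{introduction-main-bound-C}.

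I expect no real obstacle. The one point to watch is that the invariance of $\mu$ under multiplication by units genuinely needs characteristic zero; this is precisely why the identification fails, and $\mu(f)$ may be infinite, in positive characteristic.
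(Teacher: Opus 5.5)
Your Step~1 is where all of the difficulty lies, and as you use it, it is circular. Formally your argument copies the one-line remark in the introduction (``choosing $K$ to be a field of characteristic zero''). But the argument behind Theorem~\ref{introduction-main-thm}, namely the proof of Theorem~\ref{main-thm-p}, works only in positive characteristic. That proof writes $\tau(f)=\lambda(R/tj(f))=e_{HK}(tj(f))=e_{n+1}(tj(f))$ and bounds this from below by the Miller--Taylor inequality (Theorem~\ref{lower-bound}). Hilbert--Kunz multiplicity, $h_s$ and $e_s$ are all defined through Frobenius powers $I^{[p^e]}$, so none of this exists when $\mathrm{char}\,K=0$. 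Since $\mu(\mathcal{O}_f)=\mu(f)$ in characteristic zero, the characteristic-zero case of Theorem~\ref{introduction-main-thm} is literally the statement you are proving. The paper establishes it separately as Theorem~\ref{main-thm-0}, so your proof reduces the corollary to itself. Your Step~2, which you call the only real content, is the routine part: it is item (ii) of the proposition quoted from \cite{mu-O}, and it enters the paper's proof only as a small ingredient.

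The missing idea is the transfer from characteristic $p$ to characteristic $0$ by reduction modulo $p$, and this transfer is not automatic. The paper proceeds in three steps:
\begin{enumerate}
\item It uses finite contact determinacy (Theorem~\ref{finite-deter}) to replace $f$ by a polynomial with the same $\mu$ and $\tau$.
\item It spreads the coefficients over the finitely generated domain $A=\mathbb{Z}[a_\alpha]$.
\item It uses semicontinuity (Corollary~\ref{cor-semicont}) to pick a maximal ideal $\mathfrak{p}\subset A$, with residue field of positive characteristic, at which $\tau(f(\mathfrak{p}))=\tau(f)$.
\end{enumerate}
The delicate point is the numerator. Since $\mu(\mathcal{O}_{f(\mathfrak{p})})$ is a minimum over units, even knowing $\mu(f(\mathfrak{p}))=\mu(f)$ only gives $\mu(\mathcal{O}_{f(\mathfrak{p})})\le\mu(f)$. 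That is the wrong direction: you need $\mu(\mathcal{O}_{f(\mathfrak{p})})\ge\mu(f)$.

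The paper secures equality with a null form. It takes a nonzero null form $G$ of the Tjurina ideal over $A$, that is, a nonzero element of the kernel of $A[Y_0,\dots,Y_n]\to\bigoplus_s I_A^s/\mathfrak{m}_AI_A^s$, which exists for dimension reasons. It then takes a linear unit $u=a_0+\sum a_ix_i$ with $G(a_0,-a_1,\dots,-a_n)\neq0$. Next it chooses $\mathfrak{p}$ so that this nonvanishing survives and $\mu(\bar u\,f(\mathfrak{p}))=\mu(uf)=\mu(f)$. Then $\bar G$ is still a null form, and Proposition~\ref{null-form-criterion} gives $\mu(\mathcal{O}_{f(\mathfrak{p})})=\mu(\bar u\,f(\mathfrak{p}))=\mu(f)$. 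Finally, Theorem~\ref{main-thm-p} over $k(\mathfrak{p})$ yields the bound. Without an argument of this kind, your proposal does not prove the corollary.
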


We can see that for every $n \geq 2$, the bound in (\ref{introduction-main-bound-C}) is sharper than the bound in (\ref{bound-liuyongqiang}) given by Liu. For $n=2,3$, the bound given in (\ref{introduction-main-bound-C}) equals $\frac{4}{3},\frac{3}{2}$, which coincides with Conjecture \ref{conj-4/3}, \ref{conj-3/2}. Therefore, we get Corollary \ref{cor-=3/2}, a weak version of Durfee's conjecture.
\begin{corollary}\label{cor-=3/2}
    $\frac{\mu}{\tau}\leq\frac{3}{2}$ for any isolated hypersurface singularity $f \in \mathbb{C}[[x,y,z]]$.
\end{corollary}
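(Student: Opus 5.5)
The corollary is the case $n=3$ of the characteristic-zero corollary, specialised to $K=\mathbb{C}$. So the plan has two steps: evaluate the bound $\frac{1}{\mathcal{H}_{2}(3)-\mathcal{H}_{1}(3)}$ explicitly, then invoke the previous corollary. The latter follows from Theorem \ref{introduction-main-thm}, because $\mu(\mathcal{O}_f)=\mu(f)$ in characteristic zero, as recalled from \cite{mu-O}.

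For $n=3$ we have $\frac{n+1}{2}=2$ and $\frac{n-1}{2}=1$. With $\mathcal{H}_s(3)=\frac{1}{6}\sum_{i=0}^{\lfloor s\rfloor}(-1)^i\binom{3}{i}(s-i)^3$, we get
\[
\mathcal{H}_1(3)=\tfrac{1}{6}\bigl(1^3-3\cdot 0^3\bigr)=\tfrac16,
\qquad
\mathcal{H}_2(3)=\tfrac16\bigl(2^3-3\cdot 1^3+3\cdot 0^3\bigr)=\tfrac56 .
\]
Hence $\mathcal{H}_2(3)-\mathcal{H}_1(3)=\frac{4}{6}=\frac23$, and the bound in (\ref{introduction-main-bound-C}) equals $\frac32$.

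These values are the Eulerian-type volumes of slices of the unit cube $[0,1]^3$: the slab $1\le \sum x_i\le 2$ has volume $\frac23$. This gives an independent check of the arithmetic.

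Applying the characteristic-zero corollary with $K=\mathbb{C}$ and $n=3$ then gives $\mu(f)/\tau(f)\le \frac32$ for every isolated $f\in\mathbb{C}[[x,y,z]]$. The only subtlety is the case where $f$ is analytic rather than formal. There the Milnor and Tjurina numbers are computed in $\mathbb{C}\{x,y,z\}$, and they agree with the formal ones by faithful flatness of completion, so nothing further is needed.

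All the substantial difficulty sits in Theorem \ref{introduction-main-thm}, which we are allowed to assume. The corollary itself requires no new obstacle beyond checking the floor conventions in $\mathcal{H}_s$ at the integer arguments $s=1$ and $s=2$.
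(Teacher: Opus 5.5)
Your proposal is correct and matches the paper's own argument: the corollary is just Theorem \ref{main-thm-0} (the characteristic-zero form of the main theorem) specialised to $n=3$ and $K=\mathbb{C}$. Your evaluation $\mathcal{H}_2(3)-\mathcal{H}_1(3)=\frac56-\frac16=\frac23$, which gives the bound $\frac32$, is the only computation needed, and the paper simply asserts that value without writing it out.
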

We have also proposed Conjecture \ref{conj-j-closure} regarding the equality condition in (\ref{introduction-main-bound}) (or (\ref{introduction-main-bound-C})). Once Conjecture \ref{conj-j-closure} holds, the equality in (\ref{introduction-main-bound}) cannot be achieved, and Conjecture \ref{conj-3/2} follows automatically. Therefore, we provide another approach to studying Conjecture \ref{conj-3/2}, while to some extent demonstrating the correctness of Conjecture \ref{conj-durfee}.

A. Dimca and G.M. Greuel \cite[Example 4.1]{greuel-conj} showed that $\frac{4}{3}$ is sharp for $n=2$. Wahl \cite[Example 4.7]{Wahl} showed that $\frac{3}{2}$ is sharp for $n=3$. In this paper, we have constructed a family of isolated hypersurface singularities with $\frac{\mu}{\tau}$ tending to $\frac{1}{\mathcal{H}_{\frac{n+1}{2}}(n)-\mathcal{H}_{\frac{n-1}{2}}(n)}$ for all $n$. Therefore, the bound in Theorem \ref{introduction-main-thm} is sharp for all $n\geq 2$.\iffalse As a corollary, we answer part of the question raised by P. Almir\'{o}n \cite[Problem 1]{quotient-4/3}:

\begin{corollary}
    Let $(X,0)\subset (\mathbb{C}^N,0)$ be an isolated complete intersection singularity of codimension $1$. Then $\frac{1}{\mathcal{H}_{\frac{d+1}{2}}(d)-\mathcal{H}_{\frac{d-1}{2}}(d)}$ is the optional rational number such that $$\frac{\mu}{\tau} \leq \frac{1}{\mathcal{H}_{\frac{d+1}{2}}(d)-\mathcal{H}_{\frac{d-1}{2}}(d)},$$ where optimal means that there exists a family of singularities such that $\frac{\mu}{\tau}$ tends to $\frac{1}{\mathcal{H}_{\frac{d+1}{2}}(d)-\mathcal{H}_{\frac{d-1}{2}}(d)}$ when the multiplicity at the origin tends to infinity.
\end{corollary}
\fi

\section{Hilbert-Samuel multiplicity, Hilbert-Kunz multiplicity, and $s$-multiplicity}
We begin with some properties of Hilbert-Samuel multiplicity, Hilbert-Kunz multiplicity, and $s$-multiplicity. We will use these results throughout the paper without explicit reference. Throughout this section, all rings will be assumed Noetherian.
\subsection{Hilbert-Samuel multiplicity and Hilbert-Kunz multiplicity}

\begin{definition}
    Let $(R,\mathfrak{m})$ be a local ring of dimension $d$. $I \subset R$ an $\mathfrak{m}$-primary ideal. The {\em Hilbert-Samuel multiplicity} of $R$ with respect to $I$ is defined to be
    $$e(I;R)=\lim_{n \rightarrow +\infty}\frac{d!\cdot\lambda(R/I^n)}{n^d}, $$ where $\lambda(M)$ denotes the length of $M$ as an $R$-module. We often write $e(I)$ when the ring $R$ is understood.
\end{definition}

We have the following well-known properties:

\begin{proposition}
    Let $(R,\mathfrak{m})$ be a local ring of dimension $d$.

    (i) Let $I$ and $J$ be two $\mathfrak{m}$-primary ideals of $R$ having the same integral closures, then $e(I)=e(J)$.
    
    (ii) If $(R,\mathfrak{m})$ is Cohen-Macaulay, then $e(I)=\lambda(R/I)$ for every parameter ideal $I$. Here a parameter ideal means a $\mathfrak{m}$-primary ideal which has exactly $d$ generators.
\end{proposition}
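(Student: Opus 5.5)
Part (i): the plan is to reduce to a reduction of the ideal and compare Hilbert functions. First, if $\bar I=\bar J$, then $I$ and $J$ share a common reduction, namely $I\cap J$? That is not quite right. A cleaner route is to compare both with the integral closure $\bar I$. Since $I\subseteq \bar I$ and $\bar I$ is integral over $I$, the standard fact is that $I$ is a reduction of $\bar I$ (Noetherian hypothesis). So there is $r$ with $\bar I^{\,n+r}=I^{n}\bar I^{\,r}\subseteq I^n$ for all $n$. Hence
\[
I^{n+r}\subseteq \bar I^{\,n+r}\subseteq I^n ,
\]
and therefore $\lambda(R/I^n)\le\lambda(R/\bar I^{\,n+r})\le\lambda(R/I^{n+r})$.

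Dividing by $n^d/d!$ and letting $n\to\infty$, the outer terms have the same limit, because $(n+r)^d/n^d\to1$. This gives $e(I)=e(\bar I)$, and likewise $e(J)=e(\bar J)=e(\bar I)$. The only nontrivial input is the existence of $r$. For that, take generators $y_1,\dots,y_k$ of $\bar I$; each satisfies an integral equation over $I$, which shows that $\bar I^{\,m}\subseteq I\bar I^{\,m-1}$ for $m$ large (the usual determinant/Rees algebra finiteness argument). This step is the one I would cite rather than reprove.

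Part (ii): let $I=(a_1,\dots,a_d)$ be a parameter ideal in a Cohen--Macaulay local ring. Then $a_1,\dots,a_d$ is a regular sequence. For a regular sequence, the associated graded ring $\bigoplus I^n/I^{n+1}$ is a polynomial ring $(R/I)[T_1,\dots,T_d]$ (Rees' theorem on quasi-regular sequences). So $I^n/I^{n+1}$ is free over $R/I$ of rank $\binom{n+d-1}{d-1}$, which gives
\[
\lambda(R/I^n)=\lambda(R/I)\binom{n+d-1}{d}.
\]
Multiplying by $d!/n^d$ and letting $n\to\infty$ yields $e(I)=\lambda(R/I)$.

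The main obstacle is justifying that $\operatorname{gr}_I R$ is a polynomial ring. I would prove it by induction on $d$, using that a regular sequence is quasi-regular: one shows that if a form $F$ of degree $n$ satisfies $F(a)\in I^{n+1}$, then all of its coefficients lie in $I$. The induction step passes to $R/(a_d)$, using that $a_d$ is a nonzerodivisor.
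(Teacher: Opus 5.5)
The paper gives no proof of this proposition. It records both parts as well-known background facts, so there is no argument in the paper to compare yours against. Your argument is the standard textbook one, and it is correct.

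\textbf{Part (i).} $I$ is a reduction of $\overline{I}$, because $\overline{I}$ is finitely generated and integral over $I$. The squeeze $I^{n+r}\subseteq\overline{I}^{\,n+r}\subseteq I^{n}$ then gives $e(I)=e(\overline{I})$. Hence $e(I)=e(\overline{I})=e(\overline{J})=e(J)$.

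\textbf{Part (ii).} The $d$ generators of an $\mathfrak{m}$-primary ideal form a system of parameters. In the Cohen--Macaulay ring $R$ they are therefore a regular sequence. Then $\operatorname{gr}_I R\cong (R/I)[T_1,\dots,T_d]$ gives $\lambda(R/I^n)=\lambda(R/I)\binom{n+d-1}{d}$, whence $e(I)=\lambda(R/I)$.

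One caution, relevant only if you write out the proof that regular sequences are quasi-regular instead of citing it: passing to $R/(a_d)$ does not by itself close the induction.
\begin{itemize}
\item Suppose $F=G(T_1,\dots,T_{d-1})+T_dH$ has degree $\nu$ and $F(a)\in I^{\nu+1}$.
\item Working in $R/(a_d)$ does show that the coefficients of $G$ lie in $I$.
\item You are then left with $a_dH(a)\in I^{\nu+1}$ and need $H(a)\in I^{\nu}$, i.e.\ $(I^{\nu+1}:a_d)\cap I^{\nu-1}\subseteq I^{\nu}$.
\item That containment is essentially the degree-$\nu$ statement you are trying to prove.
\end{itemize}
The usual argument instead applies the inductive hypothesis to $a_1,\dots,a_{d-1}$ in $R$ itself. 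Set $I'=(a_1,\dots,a_{d-1})$. Each $(I')^{\nu}/(I')^{\nu+1}$ is then free over $R/I'$, so $a_d$, being regular on $R/I'$, is a nonzerodivisor on every $R/(I')^{\nu}$. Combined with an inner induction on the degree, this finishes the proof.

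Since you cite Rees' theorem for this step, your proof of the proposition is complete as it stands.
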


When $R$ is of characteristic $p>0$, the Frobenius map $F: R \rightarrow R,\ x \mapsto x^p$ is an ring homomorphism. For an ideal $I \subset R$ and $e \in \mathbb{N}$, we denote $I^{[p^e]}$, the $e$th Frobenius power of $I$, to be the ideal generated by all the $p^e$th power of elements of $I$. Then we have a limit similar to the Hilbert-Samuel multiplicity:

\begin{definition}
    Let $(R,\mathfrak{m})$ be a local ring of dimension $d$ and characteristic $p>0$. $I \subset R$ an $\mathfrak{m}$-primary ideal. The {\em Hilbert-Kunz multiplicity} of $R$ with respect to $I$ is defined to be

    $$e_{HK}(I;R)=\lim_{e \rightarrow +\infty}\frac{\lambda(R/I^{[p^e]})}{p^{ed}}. $$ We often write $e_{HK}(I)$ when the ring $R$ is understood.
\end{definition}

We also have properties for the Hilbert-Kunz multiplicity:

\begin{proposition}\label{eHK-property}
    Let $(R,\mathfrak{m})$ be a local ring of dimension $d$ and characteristic $p>0$.

    (i) Let $I$ and $J$ be two $m$-primary ideals of $R$ having the same tight closures, then $e_{HK}(I)=e_{HK}(J)$.

    (ii) If $(R,\mathfrak{m})$ is a regular local ring, then $e_{HK}(I)=\lambda(R/I)$.
    
    (iii) $\frac{e(I)}{d!} \leq e_{HK}(I) \leq e(I)$ for all $\mathfrak{m}$-primary ideals $I$.

    (iv) If $I$ is a parameter ideal, then $e_{HK}(I)=e(I)$.
\end{proposition}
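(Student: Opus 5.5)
The statement collects four standard facts about $e_{HK}$, and I would prove them in the order (ii), (iv), (iii), (i). Throughout, write $q=p^e$.

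For (ii), I would use Kunz's theorem: when $R$ is regular, the Frobenius $F^e$ is flat. First filter $R/I$ by a composition series with factors $K=R/\mathfrak{m}$. Then apply the exact functor $-\otimes_R F^e_*R$, which sends $R/J$ to $R/J^{[q]}$. This gives $\lambda(R/I^{[q]})=\lambda(R/I)\cdot\lambda(R/\mathfrak{m}^{[q]})$. Since $\mathfrak{m}=(x_1,\dots,x_d)$ is generated by a regular system of parameters, $R/\mathfrak{m}^{[q]}$ has $K$-basis the monomials $x^a$ with $0\le a_i<q$, so $\lambda(R/\mathfrak{m}^{[q]})=q^d$. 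Dividing by $q^d$ gives $e_{HK}(I)=\lambda(R/I)$ exactly, for every $e$.

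For (iv), let $J=(x_1,\dots,x_d)$ be a parameter ideal. Then $J^{[q]}=(x_1^q,\dots,x_d^q)$, and $e(J^{[q]})=q^d e(J)$. By Serre's formula,
\[
\lambda(R/J^{[q]})-e(J^{[q]})=\sum_{i\ge1}(-1)^{i-1}\lambda(H_i(x^q;R))=\chi_1(x^q;R).
\]
This quantity is $\ge 0$ and bounded by $\lambda(H_1(x^q;R))$. The key input is Lech's limit formula, $e(J)=\lim_{\min t_i\to\infty}\lambda(R/(x_1^{t_1},\dots,x_d^{t_d}))/(t_1\cdots t_d)$; specialising to $t_i=q$ gives $e_{HK}(J)=e(J)$. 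This is the step I expect to be the main obstacle, since the rest is bookkeeping; I would cite Lech's lemma rather than reprove it.

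For (iii), the lower bound follows from $I^{[q]}\subseteq I^q$. This gives $\lambda(R/I^{[q]})\ge\lambda(R/I^q)=\frac{e(I)}{d!}q^d+O(q^{d-1})$, and dividing by $q^d$ and letting $q\to\infty$ yields $e(I)/d!\le e_{HK}(I)$. For the upper bound, I would first pass to $R[t]_{\mathfrak{m}R[t]}$. This extension is faithfully flat with closed fibre a field, so it preserves both $e$ and $e_{HK}$, and it lets us assume the residue field is infinite. Then $I$ has a minimal reduction $J\subseteq I$ that is a parameter ideal with $\overline{J}=\overline{I}$. From $J^{[q]}\subseteq I^{[q]}$ we get $e_{HK}(I)\le e_{HK}(J)=e(J)=e(I)$, using (iv) and property (i) of Hilbert–Samuel multiplicity.

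For (i), it suffices to show $e_{HK}(I)=e_{HK}(J')$ whenever $I\subseteq J'\subseteq I^*$. The general case then follows by applying this to $I\subseteq I+J\subseteq I^*$ and $J\subseteq I+J\subseteq J^*=I^*$. Since $J'$ is finitely generated, say by $y_1,\dots,y_k$, there is a single $c\in R^\circ$ (not in any minimal prime) with $cy_j^q\in I^{[q]}$ for all $j$ and all $q\gg0$. Hence $J'^{[q]}/I^{[q]}$ is generated by $k$ elements and killed by $c$ and $I^{[q]}$. It is therefore a quotient of $(R/(I^{[q]}+cR))^k$, and its length is at most $k\,\lambda(R/(I^{[q]}+cR))$. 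Because $\dim R/cR\le d-1$ and $I^{[q]}R/cR\supseteq (IR/cR)^{\mu q}$, where $\mu$ is the number of generators of $I$, this length is $O(q^{d-1})$. So $\lambda(R/I^{[q]})-\lambda(R/J'^{[q]})=o(q^d)$, and the two Hilbert–Kunz multiplicities agree.
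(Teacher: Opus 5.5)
The paper gives no proof of this proposition. It simply records (i)--(iv) as well-known properties of Hilbert--Kunz multiplicity, so there is no competing argument to compare with.

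Your proposal supplies the standard proofs from the literature, and each step is correct:
\begin{itemize}
\item (ii) via Kunz's flatness of Frobenius;
\item (iv) via Lech's limit formula;
\item (iii) via the containment $I^{[q]}\subseteq I^q$, plus a minimal parameter reduction after passing to $R[t]_{\mathfrak{m}R[t]}$;
\item (i) via the Hochster--Huneke estimate $\lambda(J'^{[q]}/I^{[q]})\le k\,\lambda(R/(I^{[q]}+cR))=O(q^{d-1})$.
\end{itemize}
In particular, the reduction of (i) to the case $I\subseteq J'\subseteq I^*$ is valid, as is the bound $\dim R/cR\le d-1$ for $c\in R^\circ$.

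There are only cosmetic points.
\begin{itemize}
\item In (ii), the monomial $K$-basis of $R/\mathfrak{m}^{[q]}$ presupposes a coefficient field. This is harmless: $R/\mathfrak{m}^{[q]}$ is Artinian, hence unchanged by completion. Alternatively, just note that $x_1^q,\dots,x_d^q$ is a regular sequence.
\item Also in (ii), lengths must be taken for the usual $R$-structure on $R/J^{[q]}$. In other words, you are using the Peskine--Szpiro functor, not $F^e_*(-)$ viewed as an $R$-module via Frobenius. The latter would introduce a factor $[K:K^q]$.
\item In (iv), the detour through Serre's formula is unnecessary, since Lech's lemma with all $t_i=q$ already is the whole proof.
\end{itemize}
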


\subsection{$s$-multiplicity}

In \cite{s-multiplicity}, William D. Taylar investigates a function that interpolates continuously between Hilbert-Samuel multiplicity and Hilbert-Kunz multiplicity, which is called $s$-multiplicity. We give a brief review here.

\begin{proposition}\textup{(\cite[Theorem 2.1]{s-multiplicity})}
    Let $(R,\mathfrak{m})$ be a local ring of dimension $d$ and characteristic $p>0$. $I,J \subset R$ are $\mathfrak{m}$-primary ideals. For $s>0$, the limit
    $$h_s(I,J)=\lim_{e \rightarrow +\infty} \frac{\lambda(R/(I^{\lceil sp^e \rceil}+J^{[p^e]}))}{p^{ed}} $$ exists.
\end{proposition}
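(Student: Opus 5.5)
The plan is to run Monsky's argument for the existence of Hilbert--Kunz multiplicity, applied to the two-parameter family of ideals $I^{\lceil sq\rceil}+J^{[q]}$. Write $q=p^e$ and
$$a_e(M)=\lambda\bigl(M/(I^{\lceil sq\rceil}+J^{[q]})M\bigr)$$
for a finitely generated $R$-module $M$. The goal is a Cauchy-type estimate
$$\bigl|a_{e+1}(M)-p^{d}a_e(M)\bigr|\le C\,q^{d-1}$$
with $C$ independent of $e$. Dividing by $(pq)^d$ and summing the resulting geometric series then shows that $a_e(M)/q^d$ converges. We also get the trivial bound $a_e(M)\le \lambda(M/J^{[q]}M)\le Cq^{d}$, and in fact $\lambda(M/J^{[q]}M)=O(q^{\dim M})$.

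\textbf{Reductions.} Length is unchanged under completion and under a faithfully flat local extension with trivial closed fibre. I would therefore pass to $\widehat R$ and then to a gonflement whose residue field is perfect. This makes $R$ complete with perfect residue field, hence F-finite. Next, take a prime filtration of $M$ with factors $R/P$. Functions of the form $e\mapsto\lambda(N/\mathfrak a_eN)$ are additive up to error terms. Each error is controlled by $\lambda(T/J^{[q]}T)$ for modules $T$ with $\dim T<d$, so it is $O(q^{d-1})$. Factors with $\dim R/P<d$ contribute only $O(q^{d-1})$ in total. It therefore suffices to treat an F-finite complete local domain $R$ of dimension $d$ and modules over it.

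\textbf{Key estimate.} Let $\alpha=\log_p[k:k^p]$, which is $0$ here, so that $F_*R$ has rank $p^{d+\alpha}$. The two standard exact sequences
$$0\to R^{p^{d+\alpha}}\to F_*R\to T_1\to0,\qquad 0\to F_*R\to R^{p^{d+\alpha}}\to T_2\to0,$$
have $\dim T_i<d$. Tensoring with $R/L$ for $L=I^{\lceil sq\rceil}+J^{[q]}$ gives
$$\bigl|\lambda(R/L^{[p]})-p^{d}\lambda(R/L)\bigr|\le C\,\lambda(T/J^{[q]}T)=O(q^{d-1}).$$
Here we use that $L\supseteq J^{[q]}$ and that the residue field is perfect, so that lengths over $F_*R$ match.

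It remains to compare $L^{[p]}=(I^{\lceil sq\rceil})^{[p]}+J^{[pq]}$ with $I^{\lceil spq\rceil}+J^{[pq]}$. Let $\nu$ be the number of generators of $I$. A pigeonhole argument on monomials in these generators gives the chain
$$(I^{N})^{[p]}\subseteq I^{pN}\subseteq \bigl(I^{N-\nu}\bigr)^{[p]}.$$
In addition, $\lceil spq\rceil\le p\lceil sq\rceil\le\lceil spq\rceil+p$. So each comparison changes the $I$-exponent by a bounded amount, namely at most $p$ or $\nu$. Each such change costs at most the length of a module of the form
$$I^{m}/I^{m+c}\otimes R/J^{[pq]},$$
with $c$ bounded.

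\textbf{Main obstacle.} I expect the hardest step to be showing that this cost is $O(q^{d-1})$ uniformly in $m$. I would try the following. The associated graded pieces $I^{m}/I^{m+1}$ are quotients of $(R/I)^{\binom{m+\nu-1}{\nu-1}}$, which is not uniform in $m$, so instead I would bound the cost by $\lambda\bigl(R/(x\,R+J^{[pq]})\bigr)$-type terms. For this, choose a parameter element $x\in I$ in the spirit of Monsky's lemma, so that $I^{m}/I^{m+c}$ is killed by $I^c$ and is a subquotient of $R/(I^c+J^{[pq]})$-modules. Concretely, I would show
$$\lambda\bigl(I^m/(I^{m+c}+J^{[pq]}\cap I^m)\bigr)\le \lambda\bigl(R/(I^{c}+J^{[pq]})\bigr)\cdot(\text{bounded})$$
for $m$ large compared with $q$, and use $I^{m}\subseteq J^{[pq]}$ once $m\ge \nu_J\, pq$ (where $\nu_J$ is the number of generators of $J$) to cap the relevant range. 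If this uniform bound resists, the fallback is the cruder statement that $\lambda(R/(I^{m}+J^{[q]}))$ is, for fixed $q$, a function of $m$ whose increments are bounded by $C q^{d-1}$. This follows from the associated graded module of $R/J^{[q]}$ having Hilbert polynomial degree bounded in terms of $\dim R/J^{[q]}$, and this is the point I would need to verify most carefully.

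Combining the Frobenius estimate with the exponent comparison yields $|a_{e+1}-p^da_e|\le Cq^{d-1}$, and hence the existence of the limit $h_s(I,J)$.
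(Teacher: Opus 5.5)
Your overall mechanism is the right Monsky-type argument: pass to a complete domain with perfect residue field, compare $F_*R$ with $R^{p^d}$, then compare $L^{[p]}$ with $I^{\lceil spq\rceil}+J^{[pq]}$. (The paper itself gives no proof here; it only quotes Taylor.) However, you leave open the step you single out, and neither of your arguments for it works as written.

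\textbf{The flagged step.} Your first inequality bounds the cost by $\lambda(R/(I^c+J^{[pq]}))$ times a constant, which is $O(1)$. In the range that matters, $m\approx spq$, this is false. Take $R=k[[x,y]]$, $I=J=\mathfrak m$, $c=1$, $s<1$: then $(\mathfrak m^m+\mathfrak m^{[pq]})/(\mathfrak m^{m+1}+\mathfrak m^{[pq]})$ has length $m+1$. Restricting to $m$ large compared with $q$ only covers the range where $I^m\subseteq J^{[pq]}$, where the cost is $0$ anyway.

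Your fallback statement is true, but not for the reason you give. $R/J^{[q]}$ is Artinian, so its associated graded module has Hilbert polynomial $0$ and yields nothing uniform in $q$.

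The missing observation is that no uniformity in $m$ is needed. Every exponent that occurs is $O(q)$; indeed it is at most $p\lceil sq\rceil$. Since $\lambda(I^i/I^{i+1})\le C_0\, i^{d-1}$ for all $i\ge 1$ by the Hilbert--Samuel polynomial,
\[
\lambda\Bigl(\frac{I^m+J^{[pq]}}{I^{m+c}+J^{[pq]}}\Bigr)\le \lambda(I^m/I^{m+c})\le c\,C_0\,(m+c)^{d-1}=O(q^{d-1}).
\]
Better still, you only need one direction. The containments $(I^{\lceil sq\rceil}+J^{[q]})^{[p]}\subseteq I^{p\lceil sq\rceil}+J^{[pq]}\subseteq I^{\lceil spq\rceil}+J^{[pq]}$ give $a_{e+1}\le p^d a_e+Cq^{d-1}$. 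For $b_e=a_e/p^{ed}\ge 0$ this says $b_{e'}\le b_e+C'p^{-e}$ for all $e'\ge e$, hence $\limsup b\le\liminf b$. So the pigeonhole chain is unnecessary.

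\textbf{The reduction to domains.} There is a second, smaller gap here. Comparing $R$ with $\bigoplus_P R/P$ (over $P$ with $\dim R/P=d$) by maps in both directions whose cokernels have dimension $<d$ uses only right exactness and $\mathfrak a_e\supseteq J^{[q]}$. But it requires $R_P$ to be a field, i.e.\ $R$ reduced, and completion does not preserve reducedness.

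For non-reduced rings, additivity up to $O(q^{d-1})$ does not follow from $\mathfrak a\supseteq J^{[q]}$ alone. In $R=k[[x,y]]/(y^2)$, take $0\to yR\to R\to R/yR\to 0$ and $\mathfrak a=(x^q,y)\supseteq\mathfrak m^{[q]}$. The two outer terms each contribute length $q$, while $\lambda(R/\mathfrak a)=q$, so the defect is $q=q^{d}$.

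You therefore need a step that uses the shape of $\mathfrak a_e$. For example:
\begin{enumerate}
\item Choose $q_0=p^{e_0}$ with $(\sqrt{0})^{[q_0]}=0$.
\item Use $\lambda(R/\mathfrak b^{[q_0]})=\lambda(F^{e_0}_*R/\mathfrak b\,F^{e_0}_*R)$, where $F^{e_0}_*R$ is a finitely generated $R_{\mathrm{red}}$-module.
\item Sandwich $\mathfrak a_{e+e_0}$ between $\mathfrak a_e^{[q_0]}$ and $(I^{\lceil sq\rceil-\nu}+J^{[q]})^{[q_0]}$. The corresponding colengths differ by $O(q^{d-1})$, by the same Hilbert-function bound applied to the module $F^{e_0}_*R$.
\end{enumerate}
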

    
Here are some properties of $h_s(I,J)$:

\begin{proposition}\label{h-property}\textup{(\cite[Proposition 2.6, Theorem 2.7]{s-multiplicity})}
Let $(R,\mathfrak{m})$ be a local ring of dimension $d$ and characteristic $p>0$. Let $I,J \subset R$ be $\mathfrak{m}$-primary ideals and let $s>0$ be a real number.

(i) $h_s(I,J)\leq \min\{\frac{s^d}{d!}e(I),e_{HK}(J)\}$.

(ii) If $s'\geq s$ then $h_{s'}(I,J)\geq h_s(I,J)$.

(iii) If $I'$ and $J'$ are ideals of $R$ such that $I\subseteq I'$ and $J\subseteq J'$, then $h_s(I',J')\leq h_s(I,J)$. 

(iv) If $I'$ is an ideal of $R$ with the same integral closure as $I$, then $h_s(I',J)=h_s(I,J)$.

(v) If $J'$ is an ideal of $R$ with the same tight closure as $J$, then $h_s(I,J')=h_s(I,J)$.

(vi) The function $h_s(I,J)$ is Lipschitz continuous.
\end{proposition}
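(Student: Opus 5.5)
Since the existence of $h_s(I,J)$ is already granted by \cite[Theorem 2.1]{s-multiplicity}, I plan to prove each item by comparing the colengths $\lambda(R/(I^{\lceil sq\rceil}+J^{[q]}))$, $q=p^e$, for the two sets of data. The aim in each case is to show the two colengths differ by $O(q^{d-1})$, or by a controlled multiple of $q^d$, and then divide by $q^d$ and let $e\to\infty$.

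\textbf{Items (i)–(iii).} These follow from containments alone.
\begin{itemize}
\item For (i), the ideal $I^{\lceil sq\rceil}+J^{[q]}$ contains both $I^{\lceil sq\rceil}$ and $J^{[q]}$. Hence its colength is at most $\lambda(R/I^{\lceil sq\rceil})$ and at most $\lambda(R/J^{[q]})$. Dividing by $q^d$, the first bound tends to $\frac{s^d}{d!}e(I)$ by the definition of $e(I)$, since $\lceil sq\rceil/q\to s$. The second tends to $e_{HK}(J)$.
\item For (ii), if $s'\ge s$ then $I^{\lceil s'q\rceil}\subseteq I^{\lceil sq\rceil}$, so the colength for $s'$ is at least the colength for $s$.
\item For (iii), the inclusions $I\subseteq I'$ and $J\subseteq J'$ give $I^{\lceil sq\rceil}+J^{[q]}\subseteq I'^{\lceil sq\rceil}+J'^{[q]}$, so the colength for the primed ideals is at most that for $I,J$.
\end{itemize}

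\textbf{Item (vi), done before (iv).} I will prove Lipschitz continuity on bounded intervals $[s,s']\subseteq(0,S]$. The difference of colengths is at most
\[
\lambda\bigl(I^{\lceil sq\rceil}/I^{\lceil s'q\rceil}\bigr)=\sum_{k=\lceil sq\rceil}^{\lceil s'q\rceil-1}\lambda(I^k/I^{k+1}).
\]
The associated graded ring of $I$ has Hilbert function of polynomial growth of degree $d-1$, so $\lambda(I^k/I^{k+1})\le Ck^{d-1}$. The sum is therefore at most $C(Sq+1)^{d-1}\bigl((s'-s)q+1\bigr)$. After dividing by $q^d$ and taking the limit, this gives $|h_{s'}-h_s|\le CS^{d-1}(s'-s)$.

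\textbf{Item (iv).} By (iii) it suffices to compare $I$ with $K=I+I'$, and likewise $I'$ with $K$. Since $I\subseteq K\subseteq\overline{I}$, the ideal $I$ is a reduction of $K$. So there is an $r$ with $K^{n+r}=I^nK^r\subseteq I^n$ for all $n$. Together with $I^n\subseteq K^n$, this gives
\[
h_s(K,J)\le h_s(I,J)\quad\text{and}\quad \lambda\bigl(R/(I^{\lceil sq\rceil}+J^{[q]})\bigr)\le\lambda\bigl(R/(K^{\lceil sq\rceil+r}+J^{[q]})\bigr).
\]
The right-hand colength in the second inequality exceeds $\lambda\bigl(R/(K^{\lceil sq\rceil}+J^{[q]})\bigr)$ by at most $\lambda(K^{\lceil sq\rceil}/K^{\lceil sq\rceil+r})=O(q^{d-1})$. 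The estimate from (vi) shows this, with $r$ fixed. Hence the shift disappears in the limit and $h_s(I,J)=h_s(K,J)$.

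\textbf{Item (v), the main obstacle.} Here I mimic the classical proof that $e_{HK}$ depends only on tight closure. By (iii) it suffices to treat $J\subseteq J'\subseteq J^*$. I first reduce to $R$ reduced (and excellent, after completion), where a test element $c$ not in any minimal prime exists. It satisfies $cJ'^{[q]}\subseteq J^{[q]}$ for all $q$. Then the difference of colengths is
\[
\lambda\Bigl(\frac{I^{\lceil sq\rceil}+J'^{[q]}}{I^{\lceil sq\rceil}+J^{[q]}}\Bigr).
\]
This module is generated by the images of $J'^{[q]}$, so by at most $\mu(J')$ elements, and it is killed by $c$, $I^{\lceil sq\rceil}$ and $J^{[q]}$. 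Its length is therefore at most $\mu(J')\,\lambda(R/(cR+J^{[q]}))$. Since $\dim R/cR\le d-1$, this is $O(q^{d-1})$, by the Hilbert–Kunz-type bound on $R/cR$.

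The delicate points are these:
\begin{itemize}
\item justifying the reduction to the reduced, complete case, where passing to $R_{\mathrm{red}}$ changes lengths only by lower-order terms when $\dim$ of the nilradical support is $<d$, and otherwise requires an additivity argument over minimal primes of dimension $d$;
\item securing a uniform $O(q^{d-1})$ constant.
\end{itemize}
I expect this to be the hardest step. I would handle it exactly as in the corresponding argument for $e_{HK}$ in Proposition \ref{eHK-property}(i).
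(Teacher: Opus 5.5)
The paper gives no proof of this proposition; it is quoted from Taylor, so I am comparing your plan with the standard arguments. Items (i)--(iv) and (vi) are correct and are the usual proofs:
\begin{itemize}
\item containments for (i)--(iii);
\item for (vi), the bound by $\lambda\bigl(I^{\lceil sq\rceil}/I^{\lceil s'q\rceil}\bigr)$, whose normalized limit is in fact exactly $\frac{e(I)}{d!}(s'^d-s^d)$;
\item for (iv), passing to $K=I+I'$ and using $K^{n+r}\subseteq I^n$.
\end{itemize}
One small addition to (vi): you prove Lipschitz continuity only on intervals $(0,S]$. To get it on all of $(0,\infty)$, observe that $h_s(I,J)$ is constant for large $s$. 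If $\mathfrak m^a\subseteq J$ and $\mathfrak m$ has $\nu$ generators, then $I^n\subseteq\mathfrak m^n\subseteq(\mathfrak m^{[q]})^a\subseteq J^{[q]}$ for $n\ge a\nu q$. Hence $h_s(I,J)=e_{HK}(J)$ for $s\ge a\nu$.

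In (v), the obstacle you single out does not exist. The reduction you propose is the only part of the plan you leave uncarried out, and it is not harmless. When the nilradical has $d$-dimensional support, passing to $R_{\mathrm{red}}$ changes $h_s$ at top order. For example, for $R=k[[x,y]]/(y^2)$ with $I=J=\mathfrak m$, one gets $h_s=2\min\{s,1\}$ on $R$ but $\min\{s,1\}$ on $R_{\mathrm{red}}$. So that route needs an associativity formula for $h_s$ over the $d$-dimensional minimal primes, which you have not proved.

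None of this is needed, because test elements play no role. The direct argument runs as follows.
\begin{itemize}
\item By the definition of tight closure, each of the finitely many generators $y_i$ of $J'$ has some $c_i\in R^\circ$ with $c_iy_i^q\in J^{[q]}$ for $q\ge q_i$.
\item $R^\circ$ is multiplicatively closed and avoids every minimal prime. So $c=\prod_i c_i\in R^\circ$ satisfies $cJ'^{[q]}\subseteq J^{[q]}$ for all $q\ge \max_i q_i$, and only large $q$ matter for the limit.
\item Every prime containing $c$ properly contains a minimal prime, so $\dim R/cR\le d-1$.
\end{itemize}
Your estimate $\mu(J')\,\lambda\bigl(R/(cR+J^{[q]})\bigr)=O(q^{d-1})$ then goes through verbatim. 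This is exactly the classical argument for $e_{HK}$ that you invoke. Replace the reduction paragraph with it and the proof of (v) is complete.
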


There are still some important properties related to two thresholds.

\begin{definition}
    Let $R$ be a ring of characteristic $p>0$, $I,J$ be ideals of $R$. For $e \in \mathbb{N}$, set
    $$\nu^I_J(p^e)=\sup \{n \in \mathbf{N} \mid I^n \nsubseteq J^{[p^e]}\} \quad \mathrm{and} \quad \mu^I_J(p^e)=\inf \{n \in \mathbb{N}\mid J^{[p^e]} \nsubseteq I^n\}. $$
    We also set $$c_J(I)=\lim_{e \rightarrow +\infty} \frac{\nu^I_J(p^e)}{p^e} \quad\mathrm{and} \quad b_J(I)=\lim_{e \rightarrow +\infty} \frac{\mu^I_J(p^e)}{p^e}.$$
    Furthermore, if $(R,\mathfrak{m})$ is a local ring and $I,J \subset R$ are $\mathfrak{m}$-primary ideals, then $b_J(I) \leq c_J(I)$.
\end{definition}

\begin{proposition}\label{h-shouwei-property}\textup{(\cite[Lemma 3.3]{s-multiplicity})}
    Let $(R,\mathfrak{m})$ be a local ring of dimension $d$ and characteristic $p>0$, and let $I,J \subset R$ be $\mathfrak{m}$-primary ideals.

    (i) If $s\leq b_J(I)$ then $h_s(I,J)=\frac{s^d}{d!}e(I)$.

    (ii) If $s \geq c_J(I)$ then $h_s(I,J)=e_{HK}(J)$.
\end{proposition}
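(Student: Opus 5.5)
Both parts follow from the same mechanism. For $e\gg 0$, one of the two ideals $I^{\lceil sp^e\rceil}$, $J^{[p^e]}$ is contained in the other, so the sum collapses to a single ideal. The boundary values $s=b_J(I)$ and $s=c_J(I)$ are then handled by continuity (Proposition \ref{h-property}(vi)). Throughout write $q=p^e$.

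For (i), first take $s<b_J(I)$. Since $\mu^I_J(q)/q\to b_J(I)$, for all large $e$ we have $\lceil sq\rceil\le sq+1<\mu^I_J(q)$, because $(b_J(I)-s)q\to\infty$. The powers $I^n$ decrease in $n$, and $\mu^I_J(q)$ is the least $n$ with $J^{[q]}\nsubseteq I^n$. Hence $J^{[q]}\subseteq I^n$ for every $n<\mu^I_J(q)$, and in particular $J^{[q]}\subseteq I^{\lceil sq\rceil}$. Therefore
\[
\lambda\bigl(R/(I^{\lceil sq\rceil}+J^{[q]})\bigr)=\lambda(R/I^{\lceil sq\rceil}).
\]
By the definition of $e(I)$ as a limit, $\lambda(R/I^{m})=\frac{e(I)}{d!}m^d+o(m^d)$. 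Since $\lceil sq\rceil/q\to s$, dividing by $q^d$ gives $h_s(I,J)=\frac{s^d}{d!}e(I)$. For $s=b_J(I)$, both sides are continuous in $s$: the left side by Proposition \ref{h-property}(vi), the right side being a polynomial. Letting $s\uparrow b_J(I)$ gives the equality there as well.

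For (ii), first take $s>c_J(I)$. Since $\nu^I_J(q)/q\to c_J(I)$, for large $e$ we get $\lceil sq\rceil\ge sq>\nu^I_J(q)$. The number $\nu^I_J(q)$ is the largest $n$ with $I^n\nsubseteq J^{[q]}$, and the powers decrease, so $I^n\subseteq J^{[q]}$ for all $n>\nu^I_J(q)$. Thus $I^{\lceil sq\rceil}+J^{[q]}=J^{[q]}$, and the normalized lengths $\lambda(R/J^{[q]})/q^d$ converge to $e_{HK}(J)$ by definition. For $s=c_J(I)$, again use continuity of $h_s$ in $s$ and let $s\downarrow c_J(I)$. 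One small point needs checking: $\nu^I_J(q)$ is finite. This holds because $J^{[q]}$ is $\mathfrak m$-primary, so it contains some power of $\mathfrak m$, and hence some power of $I$.

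I expect the main obstacle to be only the bookkeeping with ceilings. One must ensure the strict inequalities $\lceil sq\rceil<\mu^I_J(q)$ and $\lceil sq\rceil>\nu^I_J(q)$ hold eventually, which is why the argument works with strict inequalities in $s$ and recovers the endpoints by Lipschitz continuity. The existence of the limits $b_J(I)$ and $c_J(I)$ is taken as given, as in their definition.
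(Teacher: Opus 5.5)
Your proof is correct and is essentially the standard argument for this result. The paper only cites it (Taylor's Lemma 3.3) and gives no proof of its own. In that argument, for $s<b_J(I)$ (resp.\ $s>c_J(I)$) one gets $J^{[q]}\subseteq I^{\lceil sq\rceil}$ (resp.\ $I^{\lceil sq\rceil}\subseteq J^{[q]}$) for $q\gg 0$, so the sum collapses to a single ideal, and the endpoint $s=b_J(I)$ or $s=c_J(I)$ then follows from the Lipschitz continuity of $h_s$ in $s$, which is established independently of this lemma, so there is no circularity.
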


For normalization, the following normalization factor needs to be introduced.

\begin{proposition}\textup{(\cite[Proposition 3.4]{s-multiplicity})}
    Let $k$ be a field of characteristic $p>0$, $R=k[[x_1,\dots,x_d]], \ \mathfrak{m}=(x_1,\dots,x_d).$ Then
    $$h_s(\mathfrak{m},\mathfrak{m})=\sum_{i=0}^{\lfloor s \rfloor}\frac{(-1)^i}{d!}\binom{d}{i}(s-i)^d.$$ We denote $h_s(\mathfrak{m},\mathfrak{m})$ by $\mathcal{H}_s(d)$.
\end{proposition}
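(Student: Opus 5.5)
The plan is to compute the length $\lambda\bigl(R/(\mathfrak{m}^{\lceil sq\rceil}+\mathfrak{m}^{[q]})\bigr)$ exactly for $q=p^e$ by counting monomials, and then identify the limit with a volume. Both ideals are monomial ideals, so their sum is monomial. Concretely, $\mathfrak{m}^{[q]}=(x_1^q,\dots,x_d^q)$ and $\mathfrak{m}^N$ is generated by the monomials of degree $N$. Hence $R/(\mathfrak{m}^{N}+\mathfrak{m}^{[q]})$ has a $k$-basis given by the monomials $x^a$ that avoid both ideals, namely those with
$$0\le a_i\le q-1 \text{ for all } i \quad\text{and}\quad |a|=a_1+\cdots+a_d\le N-1 .$$
With $N=\lceil sq\rceil$ this gives
$$\lambda\bigl(R/(\mathfrak{m}^{\lceil sq\rceil}+\mathfrak{m}^{[q]})\bigr)=\#\{a\in\{0,\dots,q-1\}^d : |a|<\lceil sq\rceil\}=\#\{a\in\{0,\dots,q-1\}^d : |a|< sq\}.$$

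Next, I would divide by $q^d$ and view the count as a Riemann sum. Attach to each lattice point $a$ the cube $\frac1q(a+[0,1)^d)\subset[0,1]^d$. The count divided by $q^d$ is then the volume of a union of cubes. This union contains the region $P_s=\{t\in[0,1]^d:\sum t_i\le s\}$ up to the cubes meeting the hyperplane $\sum t_i=s$ within distance $d/q$. The number of such boundary cubes is $O(q^{d-1})$, since the lattice points involved satisfy $sq-d\le|a|<sq$ and each slice $|a|=c$ contains $O(q^{d-1})$ points. The discrepancy is therefore $O(1/q)$, and so
$$h_s(\mathfrak{m},\mathfrak{m})=\operatorname{vol}\Bigl(\{t\in[0,1]^d:\textstyle\sum t_i\le s\}\Bigr).$$
This step is the main place where care is needed: the error must be uniform in the ceiling and the boundary cubes must be controlled. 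A cruder alternative is to sandwich the count between the lattice-point counts for $s\pm d/q$ and use monotonicity together with continuity of the volume in $s$.

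Finally, I would compute the volume by inclusion–exclusion on the constraints $t_i\le1$. Start from the simplex $\{t\ge0,\ \sum t_i\le s\}$, whose volume is $s^d/d!$. For any set $S$ of $i$ coordinates, the region where $t_j>1$ for all $j\in S$ becomes, after translating $t_j\mapsto t_j-1$, a simplex with total bound $s-i$. Its volume is $(s-i)^d/d!$ if $i\le s$ and $0$ otherwise. Inclusion–exclusion over the $\binom{d}{i}$ such sets gives
$$\operatorname{vol}(P_s)=\sum_{i=0}^{\lfloor s\rfloor}\frac{(-1)^i}{d!}\binom{d}{i}(s-i)^d,$$
which is the claimed formula $\mathcal{H}_s(d)$.

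As a sanity check, Proposition \ref{h-shouwei-property} should be consistent with this. In $k[[x_1,\dots,x_d]]$ one has $b_{\mathfrak{m}}(\mathfrak{m})=1$ and $c_{\mathfrak{m}}(\mathfrak{m})=d$. The formula indeed gives $s^d/d!$ for $s\le1$ and $1=e_{HK}(\mathfrak{m})$ for $s\ge d$.
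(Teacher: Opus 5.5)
Your proof is correct and follows essentially the same route as the paper. The paper only cites the result from Taylor, but it records in Remark~\ref{H-measure} that $\mathcal{H}_s(d)$ is the volume of $\{t\in[0,1]^d:\sum t_i\le s\}$, and your steps (counting the surviving monomials $x^a$ with $a_i<q$, $|a|<sq$, bounding the boundary-cube error by $O(d/q)$, and evaluating the volume by inclusion--exclusion) establish exactly that identification and the closed formula.
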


\begin{remark}\label{H-measure}
    $\mathcal{H}_s(d)$ is exactly the measure of the set $$\{(x_1,\dots,x_d) \in [0,1]^d \mid x_1+\dots+x_d \leq s\} \subset \mathbb{R}^d.$$
\end{remark}

Here are some properties of $\mathcal{H}_s(d)$.

\begin{proposition}\label{H-property}\textup{(\cite[Lemma 3.7]{s-multiplicity})}
    (i) $\mathcal{H}_s(d)$ is non-decreasing.

    (ii) $\mathcal{H}_s(d)$ is Lipschitz continuous.

    (iii) If $s \geq d$, $\mathcal{H}_s(d)=1$.

    (iv) If $0<s \leq 1$, $\mathcal{H}_s(d)=\frac{s^d}{d!}$.

    (v) $H_s(d)+H_{d-s}(d)=1$ for $0 < s \leq d$. 

    (vi) For $d \geq 1$, $\mathcal{H}_s(d)=\int_{s-1}^{s}\mathcal{H}_{t}(d-1)dt$.
\end{proposition}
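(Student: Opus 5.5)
The plan is to derive all six properties from the geometric description in Remark \ref{H-measure}. There, $\mathcal{H}_s(d)$ is the Lebesgue measure of
$$P_s(d)=\{x\in[0,1]^d \mid x_1+\dots+x_d\le s\}.$$
The first step is to make sure that Remark is justified, since everything else rests on it.

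The justification is inclusion--exclusion on the standard simplex $\Delta_s=\{x\in\mathbb{R}_{\ge0}^d \mid \sum x_i\le s\}$, whose volume is $s^d/d!$. The set $P_s(d)$ is $\Delta_s$ with the points having some $x_i>1$ removed. For a subset $S$ of indices with $|S|=i$, the set $\{x\in\Delta_s \mid x_j\ge1 \text{ for } j\in S\}$ is a translate of $\Delta_{s-i}$. It is empty when $i>s$. Inclusion--exclusion therefore gives
$$\mathrm{vol}(P_s(d))=\sum_{i=0}^{\lfloor s\rfloor}(-1)^i\binom{d}{i}\frac{(s-i)^d}{d!},$$
which is the defining formula. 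Alternatively, one can read it off from $h_s(\mathfrak{m},\mathfrak{m})$ directly. The monomials outside $\mathfrak{m}^{\lceil sp^e\rceil}+\mathfrak{m}^{[p^e]}$ are the $x^a$ with $0\le a_j<p^e$ and $\sum a_j<\lceil sp^e\rceil$. After dividing by $p^{ed}$ this count is a Riemann sum converging to $\mathrm{vol}(P_s(d))$. In the proof I will extend the notation by setting $\mathcal{H}_s(d)=0$ for $s\le0$ and $\mathcal{H}_s(d)=1$ for $s\ge d$, which is consistent with the volume description.

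With the volume description in hand, most properties are immediate.

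(i) If $s\le s'$ then $P_s\subseteq P_{s'}$, so $\mathcal{H}$ is non-decreasing.

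(ii) For $s<s'$, the difference $\mathcal{H}_{s'}-\mathcal{H}_s$ is the volume of the slab $\{x\in[0,1]^d \mid s<\sum x_i\le s'\}$. Fix $x_1,\dots,x_{d-1}$. Then $x_d$ ranges over an interval of length at most $s'-s$. By Fubini the slab has volume at most $s'-s$, so $\mathcal{H}$ is $1$-Lipschitz.

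(iii) If $s\ge d$ then $P_s$ is the whole cube, so $\mathcal{H}_s(d)=1$.

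(iv) If $s\le1$ the cube constraint is automatic inside $\Delta_s$, so $\mathcal{H}_s(d)=s^d/d!$. This is also just the $i=0$ term of the formula.

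(v) Apply the measure-preserving involution $x\mapsto(1,\dots,1)-x$ of the cube. It sends $P_s$ onto $\{\sum x_i\ge d-s\}$, which is the complement of $\{\sum x_i<d-s\}$. The hyperplane $\{\sum x_i=d-s\}$ has measure zero. Hence $\mathcal{H}_s(d)=1-\mathcal{H}_{d-s}(d)$.

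(vi) Apply Fubini in the last coordinate. For fixed $x_d=t\in[0,1]$, the slice of $P_s(d)$ is $P_{s-t}(d-1)$, with the extended conventions covering $s-t\le0$ and $s-t\ge d-1$. So $\mathcal{H}_s(d)=\int_0^1\mathcal{H}_{s-t}(d-1)\,dt$, and the substitution $u=s-t$ gives $\int_{s-1}^{s}\mathcal{H}_u(d-1)\,du$. For $d=1$ we read $\mathcal{H}_u(0)$ as the indicator of $u\ge0$.

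The main obstacle is the identification in Remark \ref{H-measure}, specifically the inclusion--exclusion bookkeeping. If one instead goes through $h_s(\mathfrak{m},\mathfrak{m})$, the difficulty is the Riemann-sum limit, which needs care with the rounding $\lceil sp^e\rceil$. Both are routine, and after that each item is a one-line volume argument. The only other point requiring care is stating the boundary conventions for $\mathcal{H}$ outside $(0,d]$ so that (vi) makes sense when $s<1$ or $s>d-1$.
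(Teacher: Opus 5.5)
Your proof is correct. The inclusion--exclusion step identifies the defining formula with $\mathrm{vol}(P_s(d))$ for every $s\ge 0$; for $s>d$ the extra terms vanish because $\binom{d}{i}=0$ when $i>d$. Each of (i)--(vi) then follows exactly as you describe.

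The paper gives no argument of its own. It quotes the statement from Taylor's Lemma 3.7 and records the volume interpretation in Remark \ref{H-measure} without proof. Your write-up is therefore a self-contained replacement, not a variant of something in the text.

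It is worth seeing what the paper's own framework already yields. Since $\mathcal{H}_s(d)=h_s(\mathfrak m,\mathfrak m)$:
\begin{itemize}
\item Items (i) and (ii) are Proposition \ref{h-property}(ii),(vi) with $I=J=\mathfrak m$.
\item Items (iii) and (iv) follow from Proposition \ref{h-shouwei-property}. One has $\nu^{\mathfrak m}_{\mathfrak m}(q)=d(q-1)$ and $\mu^{\mathfrak m}_{\mathfrak m}(q)=q+1$, so $c_{\mathfrak m}(\mathfrak m)=d$ and $b_{\mathfrak m}(\mathfrak m)=1$, while $e(\mathfrak m)=e_{HK}(\mathfrak m)=1$.
\end{itemize}
The symmetry (v) and the convolution identity (vi) do not follow from the general properties of $h_s$. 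They need either the explicit formula or your volume picture, and your reflection $x\mapsto \mathbf{1}-x$ and Fubini argument is the cleanest way to get them.

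Your boundary conventions are genuinely necessary. The paper defines $\mathcal{H}_s(n)$ by the formula only for $0<s\le n$, yet (iii), (vi), and later arguments (e.g.\ $\mathcal{H}_0(n)=0$ in Proposition \ref{bound-less-n}) use values outside that range. Your conventions are consistent with the formula: for $d\ge 1$ it vanishes when $s\le 0$, and it equals $1$ for $s\ge d$ because $\sum_{i=0}^{d}(-1)^i\binom{d}{i}(s-i)^d=d!$.
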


\begin{corollary}\label{cor-H'}
    We have $\mathcal{H}'_s(d)=\mathcal{H}_{s}(d-1)-\mathcal{H}_{s-1}(d-1)$ and $\mathcal{H}'_s(d)=\mathcal{H}'_{d-s}(d)$.
\end{corollary}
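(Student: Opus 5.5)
Both identities follow from Proposition \ref{H-property}, parts (vi) and (v), by direct differentiation. For the first identity, start from the integral representation (vi):
\[
\mathcal{H}_s(d)=\int_{s-1}^{s}\mathcal{H}_t(d-1)\,dt .
\]
The integrand $t\mapsto \mathcal{H}_t(d-1)$ is continuous by (ii). It should be viewed as defined for all real $t$, extended by $0$ for $t\le 0$ and by $1$ for $t\ge d-1$. This matches the measure interpretation in Remark \ref{H-measure} and is the convention under which (vi) holds for every $s$.

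With a continuous integrand, the fundamental theorem of calculus applied to both variable endpoints gives
\[
\mathcal{H}'_s(d)=\mathcal{H}_s(d-1)-\mathcal{H}_{s-1}(d-1).
\]
In particular $\mathcal{H}_s(d)$ is $C^1$ in $s$ for $d\ge 2$, so the derivative exists in the classical sense. For $d=1$ the integrand $\mathcal{H}_t(0)$ is a step function. In that case the identity should be read for $s$ away from the integers, or one-sidedly. I would note this caveat rather than belabor it.

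For the second identity, differentiate the symmetry relation (v),
\[
\mathcal{H}_s(d)+\mathcal{H}_{d-s}(d)=1,
\]
with respect to $s$. The chain rule gives $\mathcal{H}'_s(d)-\mathcal{H}'_{d-s}(d)=0$, hence $\mathcal{H}'_s(d)=\mathcal{H}'_{d-s}(d)$.

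A consistency check is available from the first identity. Applying (v) in dimension $d-1$ gives
\[
\mathcal{H}_{d-s}(d-1)=1-\mathcal{H}_{s-1}(d-1), \qquad \mathcal{H}_{d-s-1}(d-1)=1-\mathcal{H}_{s}(d-1).
\]
Substituting into the formula for $\mathcal{H}'_{d-s}(d)$ recovers the same expression as for $\mathcal{H}'_s(d)$.

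The only real obstacle is justifying the differentiation. One must confirm that (vi) and (v) hold on an open interval of $s$, which requires the extension of $\mathcal{H}_t$ outside $(0,d]$ described above. One must also confirm that the integrand is continuous, so that the fundamental theorem of calculus applies. Both points follow from (ii) and Remark \ref{H-measure}.
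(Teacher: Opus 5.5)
Your proposal is correct and follows the same route as the paper, which simply differentiates parts (v) and (vi) of Proposition~\ref{H-property}. Your added remarks are sound refinements of details the paper leaves implicit: extending $\mathcal{H}_t$ by $0$ and $1$ outside $(0,d]$, checking continuity of the integrand so the fundamental theorem of calculus applies, and noting the $d=1$ caveat.
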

\begin{proof}
    By Proposition \ref{H-property}(v) and (vi), the result follows directly.
\end{proof}

Now we can introduce the definition of $s$-multiplicity:

\begin{definition}\textup{(\cite[Definition 3.5]{s-multiplicity})}
    Let $(R,\mathfrak{m})$ be a local ring of dimension $d$ and characteristic $p>0$. $I,J \subset R$ are $\mathfrak{m}$-primary ideals. For $s>0$, the {\em $s$-multiplicity} with respect to $(I,J)$ is defined to be
    $$e_s(I,J)=\frac{h_s(I,J)}{\mathcal{H}_s(d)}.$$
\end{definition}

By Proposition \ref{h-property} and \ref{h-shouwei-property}, we have the following corollary.

\begin{corollary}\label{e-property}\textup{(\cite[Corollary 3.8, Corollary 3.9]{s-multiplicity})}
    Let $(R,\mathfrak{m})$ be a local ring of dimension $d$ and characteristic $p>0$ and let $I,J \subset R$ be $\mathfrak{m}$-primary ideals.

    (i) If $s \leq \min \{1,b_J(I)\}$ then $e_s(I,J)=e(I)$.

    (ii) If $s\geq \max\{d,c_J(I)\}$ then $e_s(I,J)=e_{HK}(J)$.

    (iii) If $I'$ is an ideal of $R$ with the same integral closure as $I$, then $e_s(I',J)=e_s(I,J)$.

    (iv) If $J'$ is an ideal of $R$ with the same tight closure as $J$, then $e_s(I,J')=e_s(I,J)$.

    (v) The function $e_s(I,J)$ is Lipschitz continuous.

    (vi) If $(R,\mathfrak{m})$ is Cohen-Macaulay and $I$ is an parameter ideal, then $e_s(I,I)=e(I)=\lambda(R/I)$ for all $s > 0$.
\end{corollary}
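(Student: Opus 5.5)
We prove (vi) by computing $h_s(I,I)$ exactly. The aim is to show that, for a parameter ideal $I$ in a Cohen--Macaulay ring, $h_s(I,I)=\lambda(R/I)\,\mathcal{H}_s(d)$. Write $I=(x_1,\dots,x_d)$; since $R$ is Cohen--Macaulay, $x_1,\dots,x_d$ is a regular sequence. Once the formula for $h_s$ is in hand, the definition gives $e_s(I,I)=h_s(I,I)/\mathcal{H}_s(d)=\lambda(R/I)$. Part (ii) of the first proposition on Hilbert--Samuel multiplicity then gives $\lambda(R/I)=e(I)$, which finishes the proof.

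Fix $q=p^e$ and $n=\lceil sq\rceil$. Both $I^n$ and $I^{[q]}=(x_1^q,\dots,x_d^q)$ are generated by monomials in the $x_i$, so $J_q:=I^n+I^{[q]}$ is the extension of the monomial ideal
$$\mathfrak{a}_q=(T_1,\dots,T_d)^n+(T_1^q,\dots,T_d^q)\subset S=\mathbb{Z}[T_1,\dots,T_d]$$
along the map $T_i\mapsto x_i$. I claim that
$$\lambda(R/J_q)=\lambda(R/I)\cdot N_q,$$
where $N_q$ counts the exponent vectors $a\in\mathbb{N}^d$ with $a_i<q$ for all $i$ and $|a|<n$.

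To prove the claim I would use the standard fact that $x_1,\dots,x_d$ being $R$-regular makes $R$ flat over $S$ (localized at the appropriate prime). This is the local criterion for flatness, applied after using that the $T_i$ form a regular sequence on $S$ and that $\mathrm{Tor}_1^S(S/(T),R)=0$, which follows from the Koszul complex. The quotient $S/\mathfrak{a}_q$ has a finite filtration by monomial ideals whose successive quotients are all $S/(T_1,\dots,T_d)\cong\mathbb{Z}$, one for each standard monomial $T^a\notin\mathfrak{a}_q$. There are exactly $N_q$ such monomials. Tensoring with $R$ is exact by flatness, so $R/J_q$ acquires a filtration with $N_q$ factors, each isomorphic to $R/I$, which proves the claim. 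Equivalently, one can argue through $\mathrm{gr}_I(R)\cong(R/I)[T_1,\dots,T_d]$, but the flatness route avoids controlling $J_q\cap I^k$ directly.

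Finally, $N_q/q^d$ is a lattice-point count of the set $\{x\in[0,1)^d : \sum x_i<n/q\}$ scaled by $q$. Since $n/q\to s$, this ratio tends to the volume of $\{x\in[0,1]^d:\sum x_i\le s\}$, which is $\mathcal{H}_s(d)$ by Remark \ref{H-measure}. Hence $h_s(I,I)=\lambda(R/I)\mathcal{H}_s(d)$, as required. The main obstacle is the length identity $\lambda(R/J_q)=\lambda(R/I)N_q$, that is, justifying that monomial ideals in a regular sequence have the same "monomial basis" behavior as in a polynomial ring over $R/I$. The flatness argument is where I expect the care to be needed; the limit computation is routine.
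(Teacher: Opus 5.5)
\textbf{Main gap: parts (i)--(v) are missing.} Your proposal proves only part (vi), so as written it does not establish the statement. The paper gives no argument beyond citing Taylor and saying the corollary follows from Propositions~\ref{h-property} and~\ref{h-shouwei-property}. You should say the same, together with Proposition~\ref{H-property}:
\begin{itemize}
\item (i) is Proposition~\ref{h-shouwei-property}(i) combined with $\mathcal{H}_s(d)=s^d/d!$ for $0<s\le 1$.
\item (ii) is Proposition~\ref{h-shouwei-property}(ii) combined with $\mathcal{H}_s(d)=1$ for $s\ge d$.
\item (iii) and (iv) are Proposition~\ref{h-property}(iv),(v) divided by $\mathcal{H}_s(d)$.
\item (v) needs one extra remark, because the denominator $\mathcal{H}_s(d)$ tends to $0$ as $s\to 0$. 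Put $\varepsilon=\min\{1,b_J(I)\}$. On $[\varepsilon,\infty)$ the quotient of the Lipschitz functions $h_s(I,J)$ and $\mathcal{H}_s(d)$ is Lipschitz, since $\mathcal{H}_s(d)\ge\mathcal{H}_\varepsilon(d)>0$ and $h_s(I,J)\le e_{HK}(J)$. On $(0,\varepsilon]$ the function is constant by (i), and the two pieces glue. Here $b_J(I)>0$: if $\mathfrak{m}^k\subseteq I$, then $J^{[q]}\subseteq\mathfrak{m}^q\subseteq I^{\lfloor q/k\rfloor}$, so $b_J(I)\ge 1/k$.
\end{itemize}

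\textbf{Part (vi): correct in substance, but one justification is wrong.} The paper simply quotes Taylor for (vi). Your direct computation of $h_s(I,I)$, by base-changing the standard-monomial filtration of $S/\mathfrak{a}_q$ and then taking a Riemann-sum limit, is a self-contained alternative. The problem is the flatness claim behind the identity $\lambda(R/(I^n+I^{[q]}))=N_q\,\lambda(R/I)$.
\begin{itemize}
\item Since $pR=0$, $R$ is not flat over $\mathbb{Z}[T_1,\dots,T_d]$ or over any localization of it of which $R$ is a module. Such a localization must invert nothing containing $p$, so $p$ is a nonzero element of a domain, and flat modules over a domain are torsion-free.
\item Correspondingly, the local criterion would have to be tested against the residue field $\mathbb{F}_p$ of $\mathbb{Z}[T]_{(p,T)}$, not against $S/(T)\cong\mathbb{Z}$, and that $\mathrm{Tor}_1$ is nonzero.
\end{itemize}
Either of the following repairs works.
\begin{itemize}
\item Replace $\mathbb{Z}$ by $\mathbb{F}_p$. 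Then $\mathbb{F}_p[T]_{(T)}\to R$ is a local map of Noetherian local rings, and the local criterion does give flatness.
\item Drop flatness altogether. You only need $\mathrm{Tor}_1^S(S/(T),R)=H_1(x_1,\dots,x_d;R)=0$, which you already noted via the Koszul complex. Descending induction along your chain $\mathfrak{a}_q=\mathfrak{b}_0\subset\cdots\subset\mathfrak{b}_{N_q}=S$, with $\mathfrak{b}_j/\mathfrak{b}_{j-1}\cong S/(T)$, gives $\mathrm{Tor}_1^S(S/\mathfrak{b}_j,R)=0$ for every $j$. That is exactly what keeps each sequence $0\to R/I\to R/\mathfrak{b}_{j-1}R\to R/\mathfrak{b}_jR\to 0$ exact.
\end{itemize}
With this repair the length identity holds. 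The squeeze $\mathcal{H}_{(n-1)/q}(d)\le N_q/q^d\le \mathcal{H}_{(n-1+d)/q}(d)$ then gives $h_s(I,I)=\mathcal{H}_s(d)\lambda(R/I)$ for every $s>0$, and hence (vi).
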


Next we consider $e_s(I,I)$ and denoted by $e_s(I)$.

\begin{corollary}
    Let $(R,\mathfrak{m})$ be a local ring of dimension $d$ and characteristic $p>0$ and let $I\subset R$ be a $\mathfrak{m}$-primary ideal generated by $r$ elements. Then $c_I(I) \leq r$ and for $s \geq r$, $e_s(I)=e_{HK}(I)$.
\end{corollary}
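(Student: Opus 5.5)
The plan is to bound the threshold $c_I(I)$ by a pigeonhole argument on exponents. Once $c_I(I)\le r$ is known, the identity $e_s(I)=e_{HK}(I)$ for $s\ge r$ will follow from Corollary \ref{e-property}(ii).

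Write $I=(f_1,\dots,f_r)$. Then $I^n$ is generated by the monomials $f_1^{a_1}\cdots f_r^{a_r}$ with $a_1+\dots+a_r=n$. Fix $q=p^e$ and suppose $n\ge r(q-1)+1$. If every $a_i\le q-1$, the sum would be at most $r(q-1)<n$, a contradiction. So some $a_i\ge q$, and the monomial is divisible by $f_i^{q}\in I^{[q]}$. Hence $I^n\subseteq I^{[q]}$ for all such $n$, which gives
$$\nu^I_I(q)\le r(q-1).$$
Dividing by $q=p^e$ and letting $e\to\infty$ yields $c_I(I)\le \lim_e r(p^e-1)/p^e=r$. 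The limit defining $c_I(I)$ exists by the cited framework, so no separate existence argument is needed.

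For the second claim, note that $r\ge d$, because an $\mathfrak{m}$-primary ideal in a $d$-dimensional local ring needs at least $d$ generators (Krull's height theorem). Thus for $s\ge r$ we have $s\ge\max\{d,c_I(I)\}$, and Corollary \ref{e-property}(ii) with $J=I$ gives $e_s(I)=e_{HK}(I)$.

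The only points needing care are that $I^n$ is generated by those monomials, which holds because products of generators generate powers, and that $r\ge d$. Neither is a real obstacle.
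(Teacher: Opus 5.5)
Your proof is correct and follows the paper's argument. The pigeonhole containment $I^{r(q-1)+1}\subseteq I^{[q]}$ (the paper states the slightly weaker $I^{rp^e+1}\subseteq I^{[p^e]}$) gives $c_I(I)\le r$, and Corollary~\ref{e-property}(ii) then finishes; your explicit check that $r\ge d$ via Krull's height theorem, so that $s\ge r$ forces $s\ge\max\{d,c_I(I)\}$, is a step the paper leaves implicit.
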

\begin{proof}
    Clearly we have $I^{rp^e+1} \subseteq I^{[p^e]}$ for every $e \in \mathbb{N}$. Therefore $c_I(I) \leq \lim_{e \rightarrow \infty} \frac{rp^e+1}{p^e}=r$. The rest of the result follows from Corollary \ref{e-property}(ii).
\end{proof}

Then we recall the definition of $s$-closure.

\begin{definition}
    Let $R$ be a ring of characteristic $p>0$. Let $I \subset R$ be an ideal and $s \geq 1$ be a real number. An element $x \in R$ is said to be in the {\em weak $s$-closure} of $I$ if there exists $c \in R^{\circ}$ such that for all $e \gg 0$, $cx^{p^e} \in I^{\lceil sp^e \rceil}+I^{[p^e]}$, where $R^{\circ}$ denotes the set of nonzerodivisors of $R$. We denote the set of all $x$ in the weak $s$-closure of $I$ by $I^{\text{w.cl}_s}$. Clearly we have $I \subseteq I^{\text{w.cl}_s}$.

    We define $s$-closure of $I$ as the union of the following chain of ideals:
    $$I \subseteq I^{\text{w.cl}_s} \subseteq (I^{\text{w.cl}_s})^{\text{w.cl}_s} \subseteq ((I^{\text{w.cl}_s})^{\text{w.cl}_s})^{\text{w.cl}_s} \subseteq \dots .$$ We denote this ideal by $I^{\text{cl}_s}$.
\end{definition}

\begin{proposition}\label{s-closure-criterion}\textup{(\cite[Theorem 4.6]{s-multiplicity})}
    Let $(R,\mathfrak{m})$ be a local ring of characteristic $p>0$ and let $I$ and $J$ be $\mathfrak{m}$-primary ideals of $R$ with $I \subseteq J$. If $J \subseteq I^{\text{cl}_s}$, then $e_s(J)=e_s(I)$. If $R$ is an $F$-finite complete domain, then $e_s(I,J)=e_s(I)$ implies $J \subseteq I^{\text{cl}_s}$ and $I^{\text{cl}_s}=I^{\text{w.cl}_s}$.
\end{proposition}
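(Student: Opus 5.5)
The plan is to treat the two implications separately. The first one is a colon-ideal length estimate. The second is a Hochster--Huneke type argument adapted to the mixed ideals $I^{\lceil sq\rceil}+I^{[q]}$, with $q=p^e$ throughout.

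\emph{First implication.} Since $R$ is Noetherian, the chain defining $I^{\text{cl}_s}$ stabilizes. Also, $I\subseteq J\subseteq I^{\text{cl}_s}$ forces $J^{\text{cl}_s}=I^{\text{cl}_s}$, so by induction along the chain it suffices to treat the case $J\subseteq I^{\text{w.cl}_s}$. Monotonicity (Proposition \ref{h-property}(iii)) already gives $h_s(J,J)\le h_s(I,I)$, so only the reverse inequality is needed.

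Write $J=I+(x_1,\dots,x_k)$ with $cx_i^{q}\in I^{\lceil sq\rceil}+I^{[q]}$ for $q\gg0$; a single $c$ works for all $i$ after taking a product. The key step is a containment of the form
\[
c^{N}\bigl(J^{\lceil sq\rceil}+J^{[q]}\bigr)\subseteq I^{\lceil (s-\varepsilon)q\rceil}+I^{[q]}\qquad(q\gg0),
\]
with $N$ independent of $q$. For the Frobenius part this is immediate, since $cJ^{[q]}\subseteq I^{\lceil sq\rceil}+I^{[q]}$. For $J^{\lceil sq\rceil}$ I would expand each product $x^\alpha i_1\cdots i_m$ and write the exponents as $\alpha_j=q_0\beta_j+r_j$ with a fixed small $q_0=p^{e_0}$. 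I would then apply the weak $s$-closure relation at level $q_0$ to each block $x_j^{q_0}$ and raise to Frobenius powers. The remainders and rounding cost only an $\varepsilon$-shift in $s$, with $\varepsilon\to0$ as $q_0\to\infty$ suitably.

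Set $L_q=I^{\lceil (s-\varepsilon)q\rceil}+I^{[q]}$. The containment then gives
\[
\lambda\bigl(R/(J^{\lceil sq\rceil}+J^{[q]})\bigr)\ \ge\ \lambda(R/(L_q:c^N))=\lambda(R/L_q)-\lambda(R/(L_q+c^NR)).
\]
Because $c$ is a nonzerodivisor, $\dim R/c^NR=d-1$. Moreover $L_q\supseteq \mathfrak m^{[q]}\cdot(\text{const})$, so the error term is $O(q^{d-1})$. Dividing by $q^d$ gives $h_s(J,J)\ge h_{s-\varepsilon}(I,I)$. Letting $\varepsilon\to0$ and using Lipschitz continuity (Proposition \ref{h-property}(vi)) gives equality, and hence $e_s(J)=e_s(I)$.

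\emph{Converse.} Assume $R$ is an $F$-finite complete domain and $e_s(I,J)=e_s(I)$. Suppose some $x\in J$ is not in $I^{\text{w.cl}_s}$. I would mimic the proof that $e_{HK}(I)=e_{HK}(J)$ forces $J\subseteq I^*$. Using $F$-finiteness, write $R^{1/q}\supseteq R^{\oplus a_q}$ with $a_q/q^{d}\to \mathrm{rank}>0$, and use a test-element style choice of $c$. The goal is a uniform lower bound
\[
\lambda\bigl((I^{\lceil sq\rceil}+I^{[q]}+x^{q}R)/(I^{\lceil sq\rceil}+I^{[q]})\bigr)\ \ge\ \delta q^{d}
\]
for some $\delta>0$. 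This contradicts the equality of the limits, since
\[
I^{\lceil sq\rceil}+I^{[q]}+x^qR\subseteq I^{\lceil sq\rceil}+J^{[q]}.
\]
Once every such $x$ is excluded we have $J\subseteq I^{\text{w.cl}_s}$, and then $I^{\text{cl}_s}=I^{\text{w.cl}_s}$. For this last point, apply the same argument to $J=I^{\text{w.cl}_s}$ itself, whose $s$-multiplicity equals that of $I$ by the first part.

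The main obstacle is this converse lower bound. The failure of $cx^{q}\in I^{\lceil sq\rceil}+I^{[q]}$ must be converted into a positive-proportion length contribution. This needs completeness and $F$-finiteness, via a Cohen--Gabber or test-element argument making the failure uniform in $q$. I would first try to adapt Hochster--Huneke's proof of the $e_{HK}$ criterion directly, replacing $I^{[q]}$ by the mixed ideal throughout.
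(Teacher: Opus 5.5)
The paper gives no proof of this proposition; it quotes it from Taylor. So your proposal has to stand on its own, and as written the converse does not.

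\textbf{The gap in the converse.} You leave the key lower bound unproved. The route you suggest, running Hochster--Huneke's argument with $I^{[q]}$ replaced throughout by $L_q=I^{\lceil sq\rceil}+I^{[q]}$, fails at a specific point. Hochster--Huneke use a uniform $c\neq 0$ and free submodules $G_q\subseteq R^{1/q}$ of full rank with $cR^{1/q}\subseteq G_q$; this is what the $F$-finite complete domain hypothesis provides. From this they get $\lambda\bigl(R/(\mathfrak{A}^{[q]}:y^{q})\bigr)\ge \gamma\, q^{d}\,\lambda\bigl(R/(\mathfrak{A}:cy)\bigr)$ for every $\mathfrak{m}$-primary $\mathfrak{A}$, every $y$, and a constant $\gamma>0$. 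They then apply this with $\mathfrak{A}=I^{[q']}$ and $y=x^{q'}$, which is legitimate because $I^{[qq']}=(I^{[q']})^{[q]}$. The mixed ideals are not Frobenius powers of one another. One only has $L_{q'}^{[q]}\subseteq L_{qq'}$, in general strictly, and that inclusion goes the wrong way for transferring the hypothesis $\lambda(R/(L_Q:x^Q))=o(Q^d)$ to the ideals $L_{q'}^{[q]}:x^{qq'}$.

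\textbf{The missing step.} What is needed is a containment in the other direction, at the cost of a bounded shift.
\begin{itemize}
\item Let $\mu$ be the number of generators of $I$. Writing exponent vectors as $q\beta+\gamma$ with $0\le\gamma_i<q$ gives $I^{\lceil sqq'\rceil}\subseteq (I^{\lceil sq'\rceil-\mu})^{[q]}$. Hence $L_{qq'}\subseteq (L'_{q'})^{[q]}$ with $L'_{q'}=I^{\lceil sq'\rceil-\mu}+I^{[q']}$.
\item Reduce to $J=I+xR$ by the squeeze $h_s(I,J)\le h_s(I,I+xR)\le h_s(I,I)$, so that $\lambda(R/(L_Q:x^Q))=o(Q^d)$.
\item For each fixed $q'$ this gives $\gamma q^d\lambda\bigl(R/(L'_{q'}:cx^{q'})\bigr)\le\lambda\bigl(R/(L_{qq'}:x^{qq'})\bigr)=o(q^d)$. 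This forces $cx^{q'}\in L'_{q'}$ for every $q'$.
\item Multiplying by any $0\neq c'\in I^{\mu}$ gives $cc'x^{q'}\in L_{q'}$, i.e.\ $x\in I^{\text{w.cl}_s}$. This is where the domain hypothesis and the uniformity of $c$ in $q$ are used.
\end{itemize}
For the last clause, apply this to $J=I^{\text{cl}_s}$, not to $J=I^{\text{w.cl}_s}$ as you wrote; for the latter the conclusion is vacuous. You need the first part together with $e_s(I^{\text{cl}_s})\le e_s(I,I^{\text{cl}_s})\le e_s(I)$.

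\textbf{The first implication.} It is salvageable but contains an error. With blocks $x_j^{q_0}$ of fixed length $q_0$, the number of blocks is about $sq/q_0$. So the multiplier is $c^{|\beta|}$ with exponent growing linearly in $q$, and your $N$ is not independent of $q$. There are two repairs:
\begin{itemize}
\item Use blocks of length $q/q_0$. Then $N\le sq_0$, and the remainders cost $kq/q_0$, which is exactly your $\varepsilon=k/q_0$.
\item Keep $N\approx sq/q_0$ and bound the error by $N\lambda(R/(cR+I^{[q]}))=O(q^d/q_0)$.
\end{itemize}
Simpler still, you can avoid the block expansion altogether. Since $s\ge 1$, $cx^q\in L_q\subseteq I^q$ gives $J\subseteq\overline{I}$; the paper uses the same observation at the end of the proof of Proposition~\ref{lemma-eq-hold}. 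Then $h_s(J,J)=h_s(I,J)$ by Proposition~\ref{h-property}(iv). Also $h_s(I,J)=h_s(I,I)$, because $(I^{\lceil sq\rceil}+J^{[q]})/L_q$ is generated by the images of the $x_i^q$, each killed by $c$. Its length is therefore at most $k\,\lambda(R/(cR+I^{[q]}))=O(q^{d-1})$. No $\varepsilon$-shift or Lipschitz argument is needed.
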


In \cite{lower-bound-of-s}, L. E. Miller and W. D. Taylor give a lower bound of $e_s(I)$.

\begin{theorem}\label{lower-bound}\textup{(\cite[Theorem 3.8]{lower-bound-of-s})}
    Let $(R,\mathfrak{m})$ be a Cohen-Macaulay local ring of dimension $d$ and characteristic $p>0$. Let $I \subset R$ be an $\mathfrak{m}$-primary ideal and $J$ be a reduction of $I$. If $I/J$ is generated by $r_0$ elements, then we have
    $$e_s(I) \geq \frac{\mathcal{H}_t(d)-r\mathcal{H}_{t-1}(d)}{\mathcal{H}_s(d)}e(I) $$ for every $r \geq r_0$ and $1 \leq t \leq s$.
\end{theorem}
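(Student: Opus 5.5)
The plan is to reduce to a statement about the unnormalized function $h$ and compare $I$ with its reduction $J$. Since $1\le t\le s$, Proposition \ref{h-property}(ii) gives $h_s(I,I)\ge h_t(I,I)$. Dividing by $\mathcal{H}_s(d)$, it therefore suffices to prove
$$h_t(I,I)\ \ge\ \bigl(\mathcal{H}_t(d)-r\,\mathcal{H}_{t-1}(d)\bigr)\,e(I).$$

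First I compute the terms attached to $J$. I take $J$ to be a minimal reduction, hence a parameter ideal; this is harmless after passing to $R[X]_{\mathfrak m R[X]}$ if the residue field is finite, since the statement is insensitive to this base change. Because $R$ is Cohen–Macaulay, Corollary \ref{e-property}(vi) gives $e_u(J,J)=e(J)$, that is, $h_u(J,J)=\mathcal{H}_u(d)\,e(J)$ for all $u>0$. Since $J$ is a reduction of $I$, the two ideals have the same integral closure. So $e(J)=e(I)$, and Proposition \ref{h-property}(iv) gives $h_u(I,J)=h_u(J,J)=\mathcal{H}_u(d)\,e(I)$. This holds for $u=t$ and, when $t>1$, for $u=t-1$. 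When $t=1$ the term $\mathcal{H}_{0}(d)$ is $0$ and the corresponding error term below vanishes trivially.

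Next comes the main length estimate at level $q=p^e$. Write $I=J+(u_1,\dots,u_{r_0})$, padding with zeros up to $r$ generators. Then $I^{[q]}=J^{[q]}+(u_1^q,\dots,u_r^q)$. The exact sequences for the successive quotients give
$$\lambda\bigl(R/(I^{\lceil tq\rceil}+I^{[q]})\bigr)\ \ge\ \lambda\bigl(R/(I^{\lceil tq\rceil}+J^{[q]})\bigr)-\sum_{i=1}^r \lambda\bigl(R/((I^{\lceil tq\rceil}+J^{[q]}+(u_1^q,\dots,u_{i-1}^q)):u_i^q)\bigr).$$
Each colon ideal contains $I^{\lceil tq\rceil-q}+J^{[q]}=I^{\lceil (t-1)q\rceil}+J^{[q]}$, because $u_i^q\in I^q$. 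Hence each subtracted term is at most $\lambda\bigl(R/(I^{\lceil (t-1)q\rceil}+J^{[q]})\bigr)$. Dividing by $q^d$ and letting $e\to\infty$ yields
$$h_t(I,I)\ge h_t(I,J)-r\,h_{t-1}(I,J)=\bigl(\mathcal{H}_t(d)-r\mathcal{H}_{t-1}(d)\bigr)e(I).$$
Combining this with the first paragraph gives the theorem.

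The main obstacle is the colon bound together with identifying its limit. I need $h_{t-1}(I,J)=\mathcal{H}_{t-1}(d)\,e(I)$ exactly, and not just the upper bound $\le\frac{(t-1)^d}{d!}e(I)$ from Proposition \ref{h-property}(i). This exact value is what forces the detour through the parameter ideal $J$ and the Cohen–Macaulay hypothesis (Corollary \ref{e-property}(vi)), combined with invariance under integral closure. The remaining points are routine: the reduction to a minimal reduction, and the edge case $t<2$, where $\lceil (t-1)q\rceil$ may be $0$ or $1$ and one simply uses $\mathcal{H}_{t-1}(d)=\frac{(t-1)^d}{d!}$ or $0$.
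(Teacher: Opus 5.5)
Your argument is correct and is essentially the Miller--Taylor argument that the paper cites, and which it reproduces for $r=1$ in the proof of Proposition \ref{lemma-eq-hold}: you strip off the extra Frobenius generators $u_i^q$ using colon ideals that contain $I^{\lceil (t-1)q\rceil}+J^{[q]}$, evaluate the $J$-terms via $e_u(J)=e(J)$ for the parameter ideal $J$ together with integral-closure invariance, and your monotonicity step $h_s(I)\ge h_t(I)$ only cosmetically replaces the paper's three-term splitting. One caveat: that $J$ is a parameter ideal must be read as part of the hypothesis (as in the application $J=j(f)$), not arranged by shrinking $J$ to a minimal reduction or by base change, because shrinking can increase $r_0$, and for a non-parameter reduction the inequality genuinely fails (e.g.\ $J=I=\mathfrak{m}^2\subset k[[x,y]]$ with $r=0$ and $t=s\ge 3$ would give $3=e_{HK}(I)\ge e(I)=4$).
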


We can directly obtain a corollary from Theorem \ref{lower-bound}, which answers Question 2.9 in \cite{question-less-d} under the assumption of Cohen-Macaulay.
\begin{corollary}
    Let $(R,\mathfrak{m})$ be a Cohen-Macaulay local ring of positive characteristic and $I \subset R$ be an $\mathfrak{m}$-primary ideal. Then $\frac{e(I)}{e_{HK}(I)} < d!$.
\end{corollary}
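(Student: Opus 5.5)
We need to prove that $\frac{e(I)}{e_{HK}(I)} < d!$ for an $\mathfrak{m}$-primary ideal $I$ in a Cohen-Macaulay local ring of dimension $d$ and characteristic $p>0$. The plan is to apply Theorem \ref{lower-bound} with $s$ large, so that $e_s(I)=e_{HK}(I)$, and then to choose $t$ and $r$ so that the lower bound strictly exceeds $e(I)/d!$.

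First we reduce to the case of an infinite residue field. Pass to $R[x]_{\mathfrak{m}R[x]}$, which is a faithfully flat extension with Cohen-Macaulay fibre. Under this extension the lengths $\lambda(R/I^n)$ and $\lambda(R/I^{[p^e]})$ are unchanged, so $e(I)$ and $e_{HK}(I)$ are preserved, and the ring stays Cohen-Macaulay of dimension $d$. Over an infinite residue field, $I$ has a minimal reduction $J$ generated by $d$ elements. Let $r_0$ be the number of generators of $I/J$; then $I$ is generated by at most $d+r_0$ elements. Take $N$ to be any integer at least the number of generators of $I$, and take $s\ge N$. The preceding corollary then gives $e_s(I)=e_{HK}(I)$.

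Now apply Theorem \ref{lower-bound} with $r=r_0$ (or any $r\ge r_0$, say $r\ge 1$) and $t=1$. Since $\mathcal{H}_0(d)=0$ and $\mathcal{H}_1(d)=\frac{1}{d!}$, we get
\[
e_{HK}(I)=e_s(I)\ge \frac{\mathcal{H}_1(d)-r\mathcal{H}_0(d)}{\mathcal{H}_s(d)}e(I)=\frac{e(I)}{d!\,\mathcal{H}_s(d)}.
\]
With $s\ge d$ we have $\mathcal{H}_s(d)=1$, so this gives only the known inequality $e_{HK}(I)\ge e(I)/d!$. For strictness we perturb $t$ slightly above $1$. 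Consider
\[
g(t)=\mathcal{H}_t(d)-r\mathcal{H}_{t-1}(d)
\]
for $t\in[1,1+\epsilon]$. For $1\le t\le 2$ we have $\mathcal{H}_{t-1}(d)=\frac{(t-1)^d}{d!}$ by Proposition \ref{H-property}(iv), and $\mathcal{H}_t(d)=\frac{t^d-d(t-1)^d}{d!}$. Hence
\[
g(t)=\frac{t^d-(d+r)(t-1)^d}{d!}.
\]
For $d\ge 2$ this has derivative $\frac{d}{d!}>0$ at $t=1$, so for small $\epsilon>0$ we get $g(1+\epsilon)>\frac{1}{d!}$. Taking $s\ge\max(N,d)$, so that $\mathcal{H}_s(d)=1$, Theorem \ref{lower-bound} yields
\[
e_{HK}(I)\ge g(1+\epsilon)\,e(I)>\frac{e(I)}{d!},
\]
using $e(I)>0$. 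The case $d=1$ is degenerate: $d!=1$, and the inequality $e(I)/e_{HK}(I)<1$ fails whenever $e_{HK}(I)=e(I)$, for example when $I$ is a parameter ideal, which is every $\mathfrak{m}$-primary ideal in a DVR. So the statement must implicitly assume $d\ge 2$, and I would say so explicitly.

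The main thing to check is that Theorem \ref{lower-bound} really permits $t$ strictly between $1$ and $s$ with $s$ large and independent of $t$. Since it holds for every $1\le t\le s$, this is fine. Beyond that, the only care needed is the residue-field extension ensuring that a $d$-generated reduction exists, and the explicit formula for $\mathcal{H}_t(d)$ on $[1,2]$, which is a direct evaluation of the definition with $\lfloor t\rfloor=1$.
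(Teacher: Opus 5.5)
Your proposal is correct and follows the paper's own proof. Both take $s$ at least the number of generators of $I$, so that $e_s(I)=e_{HK}(I)$ and $\mathcal{H}_s(d)=1$. Both then note that $f_r(t)=\frac{t^d-(d+r)(t-1)^d}{d!}$ satisfies $f_r(1)=\frac{1}{d!}$ and $f_r'(1)>0$ on $[1,2]$, which gives the strict inequality. You are also more careful than the paper on two points it leaves implicit. First, you pass to $R[x]_{\mathfrak{m}R[x]}$ to guarantee a $d$-generated reduction. Second, you observe that the argument and the statement need $d\ge 2$, since for $d=1$ one always has $e(I)=e_{HK}(I)$.
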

\begin{proof}
    Choosing $s$ equal to the number of generators of $I$, we have $s \geq d$ and $$\frac{e(I)}{e_{HK}(I)}\leq \frac{1}{\mathcal{H}_t(d)-r\mathcal{H}_{t-1}(d)}.$$
    Set $f_r(t)=\mathcal{H}_t(d)-r\mathcal{H}_{t-1}(d)$. For $1 \leq t \leq 2$, we have
    $$f_r(t)=\frac{t^d}{d!}-\frac{(t-1)^d}{(d-1)!}-r\frac{(t-1)^d}{d!} \quad \mathrm{and} \quad f'_r(t)=\frac{t^{d-1}-(d+r)(t-1)^{d-1}}{(d-1)!}.$$ We can see $f_r(1)=\frac{1}{d!}$ and $f'_r(1)>0$. Thus, $$\frac{e(I)}{e_{HK}(I)} \leq \frac{1}{\max_{1\leq t \leq 2}f_r(t)}<\frac{1}{f_r(1)}=d!\ .$$
\end{proof}

\section{Milnor number and Tjurina number of hypersurface singularities in positive characteristic}
In the following of this section, $K$ always denotes an algebraically closed field with characteristic $p>0$. We denote the formal power series ring $K[[x_1,\dots,x_n]]$ by $R$ and the maximal ideal $(x_1,\dots,x_n)$ by $\mathfrak{m}$.

\subsection{Generalized Milnor number}

For a hypersurface singularity $f \in R$, its Jacobian ideal $j(f)$ and Tjurina ideal $tj(f)$ are defined as
$$j(f)=(\frac{\partial f}{\partial x_1},\dots, \frac{\partial f}{\partial x_n}) \quad \text{and} \quad tj(f)=(f)+j(f), $$
its Milnor number $\mu(f)$ and Tjurina number $\tau(f)$ are defined in (\ref{def-mu-tau}). We say $f$ is an {\em isolated hypersurface singularity} if $\tau(f) < \infty$.

In fields of characteristic $0$, $\tau(f)<\infty$ if and only if $\mu(f)<\infty$. It fails in characteristic $p$. Taking $f=x^p+y^{p+1} \in K[[x,y]]$, we can see that $\tau(f)=p^2$ and $\mu(f)=\infty$. However, in the sense of multiplying by a unit, $\mu$ and $\tau$ can both be finite. For example, $\mu((1+x)f=\tau((1+x)f)=p^2$. In \cite{mu-O}, A. Hefez, J. H. O. Rodrigues, and R. Salom\~ao construct a generalized Milnor number which is always finite for isolated hypersurface singularities in arbitrary characteristic. We collect some results here.

\begin{definition}
    For an isolated hypersurface singularity $f \in K[[x_1,\dots,x_n]]$ with coordinate ring $\mathcal{O}_f$, we define the generalized Milnor number as
    $$\mu(\mathcal{O}_f)=\min\{\mu(uf) \mid u \in R^{\times}\}, $$ where $R^{\times}$ denotes the units of $R$.
\end{definition}

Here are some properties of $\mu(\mathcal{O}_f)$.

\begin{proposition}\textup{(\cite[Corollary 4.6]{mu-O})}
    (i) $\mu(\mathcal{O}_f)$ is well defined, i.e., there exists $u \in R^{\times}$ such that $\mu(uf)=\mu(\mathcal{O}_f)$.

    (ii) We have $\mu(\mathcal{O}_f)=e(tj(f))$. In particular, in characteristic $0$, $\mu(\mathcal{O}_f)=\mu(f)$ since $\mu(f)=e(j(f))=e(tj(f))$ automatically holds.

    (iii) $\mu(\mathcal{O}_f)=\mu(uf)$ if and only if $uf \in \overline{j(uf)}$. Here $\overline{I}$ means the integral closure of $I$.
\end{proposition}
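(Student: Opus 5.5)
The plan is to reduce everything to two facts: multiplicity is monotone under inclusion, and Rees' criterion for equality of multiplicities.

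\emph{Step 1 (the Tjurina ideal is unit-invariant).} For $u\in R^{\times}$ we have
$$\frac{\partial(uf)}{\partial x_i}=u\frac{\partial f}{\partial x_i}+f\frac{\partial u}{\partial x_i}.$$
Hence $tj(uf)=(uf)+j(uf)=tj(f)$, and $j(uf)\subseteq tj(f)$. Since $f$ is isolated, $tj(f)$ is $\mathfrak{m}$-primary.

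Suppose $\mu(uf)<\infty$. Then $j(uf)$ is an $\mathfrak{m}$-primary ideal with $n$ generators in the regular (so Cohen--Macaulay) ring $R$, i.e.\ a parameter ideal. Therefore
$$\mu(uf)=\lambda(R/j(uf))=e(j(uf))\geq e(tj(f)),$$
using monotonicity of $e$ for nested $\mathfrak{m}$-primary ideals.

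Moreover, $R$ is regular, hence formally equidimensional, so Rees' theorem applies. It gives equality $e(j(uf))=e(tj(f))$ if and only if $j(uf)$ is a reduction of $tj(f)=j(uf)+(uf)$. This is equivalent to $uf\in\overline{j(uf)}$.

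Thus, once (ii) is known, (iii) follows, and the minimum in (i) is bounded below by $e(tj(f))$. Note that in (iii) the condition $uf\in\overline{j(uf)}$ forces $j(uf)$ to be $\mathfrak{m}$-primary, since $\overline{j(uf)}\supseteq tj(f)$. So no finiteness hypothesis is lost.

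\emph{Step 2 (a unit attaining the bound).} It remains to produce $u$ with $uf\in\overline{j(uf)}$; this gives (i) and (ii) simultaneously. Since $K$ is infinite, $tj(f)=(f,f_1,\dots,f_n)$ (with $f_i=\partial f/\partial x_i$) has a minimal reduction generated by $n=\dim R$ general $K$-linear combinations of the generators. For a general coefficient matrix, the $n\times n$ block on the $f_i$ is invertible. Row-reducing, we may take the reduction to be
$$J=(f_1+c_1f,\dots,f_n+c_nf)$$
for suitable constants $c_i\in K$.

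Now set $u=1+\sum c_ix_i$. Then
$$\frac{\partial(uf)}{\partial x_i}=u\bigl(f_i+c_iu^{-1}f\bigr),$$
so $j(uf)=(f_i+c_iu^{-1}f)_i$. These generators differ from those of $J$ by elements of $\mathfrak{m}f\subseteq\mathfrak{m}\,tj(f)$.

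\emph{Step 3 (perturbation of a reduction).} Write $I=tj(f)$ and $J'=j(uf)$. Then $J\subseteq J'+\mathfrak{m}I$. Since $JI^r=I^{r+1}$ for some $r$, we get
$$I^{r+1}=J'I^r+\mathfrak{m}I^{r+1}.$$
By Nakayama, $J'I^r=I^{r+1}$, so $J'$ is a reduction of $I$. Hence $uf\in\overline{j(uf)}$, and by Step 1, $\mu(uf)=e(tj(f))$. This proves (i) and (ii).

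In characteristic $0$, the classical fact $f\in\overline{j(f)}$ (via the curve selection lemma, or by Lefschetz principle reduction to $\mathbb{C}$) lets us take $u=1$. Then $\mu(f)=e(j(f))=e(tj(f))$.

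\emph{Main obstacle.} I expect the delicate point to be Step 2: realizing a \emph{general} reduction of $tj(f)$ as an actual Jacobian ideal $j(uf)$. This is where characteristic $p$ could interfere, for instance if the genericity condition on the $c_i$ clashed with the derivative structure. The device of absorbing the discrepancy into $\mathfrak{m}\,tj(f)$ via Nakayama in Step 3 is what should make it work with only a linear unit $u$.
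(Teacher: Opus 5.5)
Your proof is correct. The paper gives no proof of this statement; it cites \cite{mu-O}, and the source's route is visible in the paper's next result, Proposition~\ref{null-form-criterion}.

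\textbf{The source's route (null forms).} It works directly in the fiber cone $F_{tj(f)}(R)\cong K[Y_0,\dots,Y_n]/\mathcal{N}_{tj(f)}$. For $u=a_0+a_1x_1+\dots$ one has $\partial_i(uf)\equiv a_0f_i+a_if \bmod \mathfrak{m}\,tj(f)$. So the generators of $j(uf)$ map to the linear forms $a_0Y_i+a_iY_0$, whose only common zero in $\mathbb{P}^n$ is $(a_0:-a_1:\dots:-a_n)$. Hence $j(uf)$ is a reduction of $tj(f)$ exactly when this point lies off $V(\mathcal{N}_{tj(f)})$. Together with the multiplicity comparison you do in Step 1, this yields (i)--(iii).

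\textbf{Your route.} You start from an abstract general minimal reduction and normalize it to $(f_i+c_if)$. You then realize it modulo $\mathfrak{m}\,tj(f)$ as $j(uf)$ with $u=1+\sum c_ix_i$, and conclude with the Nakayama perturbation lemma.

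\textbf{Comparison.} Both arguments rest on the same fact: being a reduction is detected modulo $\mathfrak{m}I$, so only the linear part of $u$ matters. The existence of reductions generated by general linear combinations is itself proved by Noether normalization of the fiber cone. So your argument is not really more elementary; it is a tidier packaging of the existence statement. The null-form version additionally gives an explicit description of the optimal units, namely the open condition $G(a_0,-a_1,\dots,-a_n)\neq 0$ for a single null form $G$. The paper relies on exactly this in the proof of Theorem~\ref{main-thm-0}: a fixed $G$ defined over $A$ is carried through reduction modulo $\mathfrak{p}$ to certify that a specific linear unit remains optimal, and your existence argument would not supply that. Both arguments need $K$ infinite, which the standing hypotheses (algebraically closed, resp.\ characteristic zero) guarantee.
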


To find which unit minimizes $\mu(uf)$, we need the notion of null form.

\begin{definition}\label{def-null-form}
    Let $I$ be a proper ideal of $R$ with $m+1$ generators $h_0,\dots,h_m$, where $m \geq n$. A null form for the ideal $I$ is a homogeneous polynomial $\varphi \in K[Y_0,\dots,Y_m]$ of some degree $s$ such that there exists an $F \in R[Y_0,\dots,Y_m]$ homogeneous of degree $s$ for which
$$F \equiv \varphi \bmod \mathfrak{m}R[Y_0,\dots,Y_m]$$ and $F(h_0,\dots,h_m) \in \mathfrak{m}I^s$. We denote by $\mathcal{N}_I$ the homogeneous ideal in $K[Y_0,\dots,Y_m]$ generated by all the null forms of $I$.
\end{definition}

\begin{remark}\label{rmk-fiber-cone}
    As $K$-algebras, we have
$$\frac{K[Y_0,\dots,Y_m]}{\mathcal{N}_I} \cong \oplus \frac{I^s}{\mathfrak{m}I^s}=F_I(R).$$ Since the Krull dimension of $F_I(R)$ is less or equal to $n$ (cf. \cite[Proposition 5.1.6]{Integral-closure}), it follows that $\mathcal{N}_I \neq (0)$.
\end{remark}

\begin{proposition}\label{null-form-criterion}\textup{(\cite[Theorem 4.4]{mu-O})}
    Let $f \in R$ be an isolated hypersurface singularity and let $u=a_0+a_1x_1+\dots+a_nx_n+\text{higher order terms}$ be a unit of $R^{\times}$. Then $\mu(\mathcal{O}_f)=\mu(uf)$ (which is equivalent to $uf \in \overline{j(uf)}$) if and only if there exists $G \in \mathcal{N}_{tj(f)}$ with $G(a_0,-a_1,\dots,-a_m) \neq 0$. In particular, this holds for a generic $(a_0:a_1:\dots:a_m) \in \mathbb{P}^n_k$.
\end{proposition}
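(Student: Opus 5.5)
The plan is to reduce the statement to a standard reduction criterion for the ideal $I=tj(f)$, phrased in its fiber cone $F_I(R)\cong K[Y_0,\dots,Y_n]/\mathcal{N}_I$ (Remark \ref{rmk-fiber-cone}). Here $Y_0$ corresponds to $h_0=f$ and $Y_i$ to $h_i=\partial f/\partial x_i$. I will use three facts.

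First, $tj(uf)=tj(f)$ for every unit $u$. Indeed $\partial(uf)/\partial x_i=u\,f_i+f\,u_i$, and $uf$ together with these partials generates the same ideal as $f$ together with the $f_i$. Hence $j(uf)\subseteq I$.

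Second, since $R$ is regular (hence Cohen--Macaulay) and $j(uf)$ is generated by $n$ elements, $\mu(uf)=\lambda(R/j(uf))=e(j(uf))$ whenever this length is finite. The earlier proposition gives $\mu(\mathcal{O}_f)=e(I)$. By Rees' theorem ($R$ is a complete regular local domain, so formally equidimensional), $e(j(uf))=e(I)$ holds if and only if $j(uf)$ is a reduction of $I$. Moreover, $j(uf)$ is a reduction of $I=(uf)+j(uf)$ if and only if $uf\in\overline{j(uf)}$. This already gives the equivalence quoted in parentheses. When $j(uf)$ is not $\mathfrak m$-primary, neither side holds, and this case is absorbed by the same criterion.

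Third, the Northcott--Rees criterion over the infinite field $K$: a subideal $J=(g_1,\dots,g_n)\subseteq I$ with each $g_i\in I$ is a reduction of $I$ if and only if the initial forms $\bar g_i\in I/\mathfrak m I=F_I(R)_1$ generate an ideal of $F_I(R)$ primary to the irrelevant ideal. Geometrically, this means the linear forms $\bar g_i$ have no common zero on $V(\mathcal{N}_I)\subseteq\mathbb{P}^n_K$. This is where the null forms enter: $\mathrm{Proj}\,F_I(R)=V(\mathcal{N}_I)$.

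Next I compute the initial forms. Write $u\equiv a_0$ and $u_i=\partial u/\partial x_i\equiv a_i \pmod{\mathfrak m}$. Then
\[
\partial(uf)/\partial x_i = u f_i + u_i f \equiv a_0 f_i + a_i f \pmod{\mathfrak m I},
\]
so the initial forms are the linear forms $L_i=a_0Y_i+a_iY_0$ for $i=1,\dots,n$. Since $u$ is a unit, $a_0\neq 0$. The common zero locus of $L_1,\dots,L_n$ in $\mathbb{P}^n$ is therefore the single point $P=(a_0:-a_1:\dots:-a_n)$: set $Y_0=a_0$, and then $Y_i=-a_i$.

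So $j(uf)$ is a reduction of $I$ if and only if $P\notin V(\mathcal{N}_I)$, that is, if and only if some $G\in\mathcal{N}_I$ satisfies $G(a_0,-a_1,\dots,-a_n)\neq 0$. This is the claimed criterion. For the final assertion, $\mathcal{N}_I\neq(0)$ by Remark \ref{rmk-fiber-cone}, so $V(\mathcal{N}_I)$ is a proper closed subset of $\mathbb{P}^n$. Hence a generic point avoids it, and every point of $\mathbb{P}^n$ with $a_0\ne0$ is realized by the unit $u=a_0+\sum a_ix_i$.

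The main obstacle I expect is making the reduction criterion precise when $I$ has $n+1$ generators but only the $n$ specific elements $\partial(uf)/\partial x_i$ are used. One has to check that these elements lie in $I$, not just in $I$ modulo higher-order terms, and that their classes in $I/\mathfrak m I$ are exactly the $L_i$. This requires the congruence $u_if\equiv a_if \pmod{\mathfrak m I}$, which holds because $f\in I$. The other delicate point is the passage from equality of multiplicities to reduction, which relies on Rees' theorem. I would cite both standard results from \cite{Integral-closure}.
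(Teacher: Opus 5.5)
Your proof is correct. The paper gives no argument of its own here; it simply imports the statement from \cite[Theorem 4.4]{mu-O}. So there is no in-paper route to compare against. What you supply is a self-contained derivation from Rees' multiplicity theorem and the Northcott--Rees description of reductions via the fiber cone $K[Y_0,\dots,Y_n]/\mathcal{N}_I$. Your computation of the initial forms $a_0Y_i+a_iY_0$, whose only common zero is $(a_0:-a_1:\dots:-a_n)$, is exactly what explains the sign pattern in the statement (where $a_m$ should read $a_n$).

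Two small refinements are worth making.

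\textbf{Avoiding the cited identity.} You invoke $\mu(\mathcal{O}_f)=e(tj(f))$, which the paper quotes as \cite[Corollary 4.6]{mu-O}, a result stated after the theorem you are proving. It is cleaner not to depend on it, and your argument already yields it. For every unit $u$ with $\mu(uf)<\infty$,
\[
\mu(uf)=e(j(uf))\ge e(I),
\]
since $j(uf)\subseteq I$. By Rees, equality holds iff $j(uf)$ is a reduction of $I$. Your genericity argument, using $\mathcal{N}_I\neq(0)$, produces such a $u$. Hence $\min_u \mu(uf)=e(I)$ follows directly.

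\textbf{Field hypotheses.} The criterion needs neither $K$ infinite nor $K$ algebraically closed. One has $F_I(R)/(\bar g_1,\dots,\bar g_n)\cong K[Y_0,\dots,Y_n]/(\mathcal{N}_I+(L_1,\dots,L_n))$. Moreover $K[Y]/(L_1,\dots,L_n)\cong K[T]$ via $Y_0\mapsto a_0T$ and $Y_i\mapsto -a_iT$. Under this map a homogeneous $G$ of degree $d$ goes to $T^dG(a_0,-a_1,\dots,-a_n)$. So the quotient has finite length iff some $G\in\mathcal{N}_I$ satisfies $G(a_0,-a_1,\dots,-a_n)\neq 0$. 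The equivalence of finite length with $j(uf)$ being a reduction is just Nakayama applied to $I^N=JI^{N-1}+\mathfrak{m}I^N$.
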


\subsection{Quotient of generalized Milnor number and Tjurina number in positive characteristic}
In this part, we can state one of our main results.

\begin{theorem}\label{main-thm-p}
    Let $f \in R=K[[x_1,\dots,x_n]]$ be an isolated hypersurface singularity. Then we have
    \begin{equation}\label{main-leq}
        \frac{\mu(\mathcal{O}_f)}{\tau(f)} \leq \frac{1}{\mathcal{H}_{\frac{n+1}{2}}(n)-\mathcal{H}_{\frac{n-1}{2}}(n)}.
    \end{equation}
\end{theorem}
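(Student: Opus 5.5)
The plan is to apply the Miller--Taylor lower bound (Theorem \ref{lower-bound}) to the $\mathfrak{m}$-primary ideal $I=tj(f)$ in the regular, hence Cohen--Macaulay, ring $R$. For the reduction $J$ we will use the Jacobian ideal of a suitably chosen unit multiple of $f$. The two numbers in the statement translate into multiplicities of $I$ as follows.

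\begin{enumerate}
\item By \cite[Corollary 4.6]{mu-O} (the proposition on $\mu(\mathcal{O}_f)$ recalled above), $\mu(\mathcal{O}_f)=e(tj(f))=e(I)$.
\item Since $R$ is regular, Proposition \ref{eHK-property}(ii) gives $\tau(f)=\lambda(R/I)=e_{HK}(I)$.
\end{enumerate}

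So it suffices to show $e_{HK}(I)\geq \bigl(\mathcal{H}_{\frac{n+1}{2}}(n)-\mathcal{H}_{\frac{n-1}{2}}(n)\bigr)e(I)$.

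Next we construct the reduction. By Proposition \ref{null-form-criterion}, pick a unit $u$ with $\mu(uf)=\mu(\mathcal{O}_f)$, so that $uf\in\overline{j(uf)}$.

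\begin{enumerate}
\item First check that $tj(uf)=tj(f)$. We have $\partial(uf)/\partial x_i=u\,\partial f/\partial x_i+f\,\partial u/\partial x_i$, and $u$ is a unit. Hence $(uf,\partial_i(uf))$ and $(f,\partial_i f)$ generate the same ideal.
\item Put $J=j(uf)$. Then $J\subseteq I=J+(uf)$ and $uf\in\overline J$, so $\overline J=\overline I$ and $J$ is a reduction of $I$. In particular $J$ is $\mathfrak{m}$-primary with $n$ generators, i.e.\ a parameter ideal.
\item The quotient $I/J$ is generated by the single element $uf$, so in Theorem \ref{lower-bound} we may take $r_0=r=1$.
\end{enumerate}

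Now choose the parameters in Theorem \ref{lower-bound}. Since $I$ is generated by $n+1$ elements, the corollary preceding the definition of $s$-closure gives $e_s(I)=e_{HK}(I)$ for $s\geq n+1$. Also $\mathcal{H}_s(n)=1$ for such $s$ by Proposition \ref{H-property}(iii). Take $s=n+1$ and $t=\frac{n+1}{2}$, which satisfies $1\leq t\leq s$ for $n\geq 1$. Theorem \ref{lower-bound} then yields
\[
\tau(f)=e_{HK}(I)=e_{n+1}(I)\geq \bigl(\mathcal{H}_{\frac{n+1}{2}}(n)-\mathcal{H}_{\frac{n-1}{2}}(n)\bigr)\,\mu(\mathcal{O}_f).
\]
Dividing gives (\ref{main-leq}), provided the coefficient is positive. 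Positivity holds because $\mathcal{H}_s(n)$ is strictly increasing on $(0,n)$ by Remark \ref{H-measure}. In the degenerate case $n=1$, read $\mathcal{H}_0=0$; then the bound is $1$, which is consistent since there $tj(f)=j(uf)$ up to the reduction.

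The choice $t=\frac{n+1}{2}$ is the optimal one, which explains the shape of the bound. The function $g(t)=\mathcal{H}_t(n)-\mathcal{H}_{t-1}(n)$ has derivative $\mathcal{H}'_t(n)-\mathcal{H}'_{t-1}(n)$. By Corollary \ref{cor-H'} this vanishes when $t-1=n-t$, and by the symmetry $\mathcal{H}'_s=\mathcal{H}'_{n-s}$ this critical point is the maximum. Optimality is not needed for the inequality itself, however.

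I expect the main obstacle to be the reduction step: verifying that a unit realising $\mu(\mathcal{O}_f)$ exists and that it leaves the Tjurina ideal unchanged, so that the hypotheses of Theorem \ref{lower-bound} hold with $r=1$. I would handle this with the Hefez--Rodrigues--Salom\~ao results above together with the product-rule computation in the first step of the reduction construction. Everything after that is a direct substitution.
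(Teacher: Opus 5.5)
Your proposal is correct and is essentially the paper's proof. You pass to a unit multiple with $uf\in\overline{j(uf)}$, identify $\mu(\mathcal{O}_f)=e(tj(f))$ and $\tau(f)=e_{HK}(tj(f))$, and apply the Miller--Taylor bound with $s=n+1$, $r=1$, $t=\frac{n+1}{2}$. Two things you add are worthwhile: the explicit check that $tj(uf)=tj(f)$, and the observation that the inequality only requires $t=\frac{n+1}{2}$ to be an admissible choice, not the maximiser. The paper's uniqueness-of-maximum argument is therefore not logically needed.
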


\begin{proof}
    Since $\mu(\mathcal{O}_f)$ and $\tau(f)$ are invariants in the sense of multiplying a unit, we assume that $f \in \overline{j(f)}$.

    By definition, $\mu(\mathcal{O}_f)=e(tj(f))$. Since $(R,\mathfrak{m})$ is regular local ring, $\tau(f)=\lambda(R/tj(f))=e_{HK}(tj(f))$ by Proposition \ref{eHK-property}(ii). Therefore, we have $$\frac{\mu(\mathcal{O}_f)}{\tau(f)} =\frac{e(tj(f))}{e_{HK}(tj(f))}.$$

    Since $f \in \overline{j(f)}$, $j(f)$ is a reduction of $tj(f)$, and $tj(f)/j(f)$ is generated by less or equal than $1$ element. Therefore, applying Theorem \ref{lower-bound} for $s=n+1,r=1$, we have
    \begin{equation}\label{main-choose-t}
        e_{HK}(tj(f))=e_{n+1}(tj(f)) \geq (\mathcal{H}_t(n)-\mathcal{H}_{t-1}(n))e(tj(f)).
    \end{equation}

    Next we consider $f_n(s)=\mathcal{H}_s(n)-\mathcal{H}_{s-1}(n),1 \leq s \leq n+1$. We use induction on $n$ to show $f_n(s)$ has only one local maximum. $n=1$ follows easily. For $n \geq 2$ and suppose that $u  \in (0,n)$ is the unique local maximum such that $f_{n-1}(s)$ is strictly increasing on $(0,u)$ and is strictly decreasing on $(u,d)$. Let $t \in (0,n+1)$ be a local maximum of $f_n(s)$. Therefore, $0=f_n'(t)=f_{n}'(t-1)-f_{n-1}'(t-1)$ by Proposition \ref{H-property}(vi). Since $f_{n-1}(s)$ is strictly increasing on $(0,u)$ and is strictly decreasing on $(u,d)$, there is a unique $t$ such that $f_{n-1}'(t)=f_{n-1}'(t-1)$. Therefore, the local maximum $t$ of $f_n(s)$ is unique.

    Choosing $t$ to be the unique local maximum of $f_n(t)$, we have
    $$f_n(t)=\mathcal{H}_t(n)-\mathcal{H}_{t-1}(n)=(1-\mathcal{H}_{n-t}(n))-(1-\mathcal{H}_{n+1-t}(n))=\mathcal{H}_{n+1-t}(n)-\mathcal{H}_{n-t}(n)=f_n(n+1-t). $$ Thus, $n+1-t=t$ implies $t=\frac{n+1}{2}$.

    Therefore, choosing $t=\frac{n+1}{2}$ in (\ref{main-choose-t}), we get
    $$\frac{\mu(\mathcal{O}_f)}{\tau(f)} =\frac{e(tj(f))}{e_{HK}(tj(f))} \leq \frac{1}{\mathcal{H}_{\frac{n+1}{2}}(n)-\mathcal{H}_{\frac{n-1}{2}}(n)}.$$
\end{proof}

\begin{corollary}
    For a plane curve singularity $f \in K[[x,y]]$, putting $n=2$ in (\ref{main-leq}), we get $\frac{\mu(\mathcal{O}_f)}{\tau(f)} \leq \frac{4}{3}$. For a surface singularity $f \in K[[x,y,z]]$, putting $n=3$ we get $\frac{\mu(\mathcal{O}_f)}{\tau(f)} \leq \frac{3}{2}$. For $n=4$ and $n=5$, the ratios $\frac{\mu(\mathcal{O}_f)}{\tau(f)} \leq \frac{192}{115}$ and $ \frac{20}{11}$, respectively.
\end{corollary}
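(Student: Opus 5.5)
The plan is to specialize Theorem \ref{main-thm-p} to $n=2,3,4,5$ and evaluate the constant $\mathcal{H}_{\frac{n+1}{2}}(n)-\mathcal{H}_{\frac{n-1}{2}}(n)$ explicitly. No further theory is needed beyond the closed formula $\mathcal{H}_s(n)=\sum_{i=0}^{\lfloor s\rfloor}\frac{(-1)^i}{n!}\binom{n}{i}(s-i)^n$.

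The first step is a simplification using the symmetry in Proposition \ref{H-property}(v). Since $\frac{n+1}{2}=n-\frac{n-1}{2}$, we get $\mathcal{H}_{\frac{n+1}{2}}(n)=1-\mathcal{H}_{\frac{n-1}{2}}(n)$. Hence the denominator equals $1-2\mathcal{H}_{\frac{n-1}{2}}(n)$, and only one value of $\mathcal{H}$ must be computed in each dimension:
\begin{itemize}
\item For $n=2$, Proposition \ref{H-property}(iv) gives $\mathcal{H}_{1/2}(2)=\frac{1}{8}$. The denominator is $\frac34$, so the bound is $\frac43$.
\item For $n=3$, we have $\mathcal{H}_1(3)=\frac16$. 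The denominator is $\frac23$, so the bound is $\frac32$.
\item For $n=4$, the formula gives $\mathcal{H}_{3/2}(4)=\frac{1}{24}\left((\tfrac32)^4-4(\tfrac12)^4\right)=\frac{77}{384}$. The denominator is $\frac{230}{384}=\frac{115}{192}$, so the bound is $\frac{192}{115}$.
\item For $n=5$, the formula gives $\mathcal{H}_2(5)=\frac{1}{120}(2^5-5)=\frac{27}{120}$. The denominator is $\frac{66}{120}=\frac{11}{20}$, so the bound is $\frac{20}{11}$.
\end{itemize}

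Substituting these values into (\ref{main-leq}) gives the four stated inequalities. The only place an error could creep in is the arithmetic, in particular the alternating sum for $n=4$, where the floor $\lfloor 3/2\rfloor=1$ must be handled correctly. As a cross-check, one can use Remark \ref{H-measure}: each value should equal the volume of the corresponding truncated simplex. For example, $\mathcal{H}_{3/2}(4)$ is the volume of $\{x\in[0,1]^4 : \sum x_i\le 3/2\}$, obtained from the full simplex by inclusion–exclusion over the $4$ corners cut off.
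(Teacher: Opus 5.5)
Your proposal is correct and follows the paper's route: the corollary is obtained by substituting $n=2,3,4,5$ into Theorem \ref{main-thm-p}. Reducing the denominator to $1-2\mathcal{H}_{\frac{n-1}{2}}(n)$ via Proposition \ref{H-property}(v) is the same symmetry the paper uses in that proof, and your values $\frac{1}{8},\frac{1}{6},\frac{77}{384},\frac{27}{120}$, with the resulting bounds $\frac43,\frac32,\frac{192}{115},\frac{20}{11}$, are all correct.
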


We have the following estimate for this bound:
\begin{proposition}\label{bound-less-n}
    For $n \geq 2$, $\frac{\mu(\mathcal{O}_f)}{\tau(f)} \leq \frac{1}{\mathcal{H}_{\frac{n+1}{2}}(n)-\mathcal{H}_{\frac{n-1}{2}}(n)}<n$.
\end{proposition}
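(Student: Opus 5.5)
The plan is to read the quantity probabilistically and use an averaging argument. Put $F_n(t)=\mathcal{H}_t(n)-\mathcal{H}_{t-1}(n)$ for $1\le t\le n+1$. The left inequality is Theorem \ref{main-thm-p}, so only
$$F_n\!\left(\tfrac{n+1}{2}\right)>\tfrac1n$$
needs proof. By Remark \ref{H-measure}, $F_n(t)$ is the volume of the slab $\{x\in[0,1]^n \mid t-1\le x_1+\dots+x_n\le t\}$. So it is the probability that a sum of $n$ independent uniform variables falls in a window of length $1$.

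\textbf{Telescoping.} Sum $F_n$ over the integers $t=1,\dots,n$:
$$\sum_{k=1}^{n}F_n(k)=\mathcal{H}_n(n)-\mathcal{H}_0(n)=1,$$
using Proposition \ref{H-property}(iii) and $\mathcal{H}_0(n)=0$ (the slabs partition the cube up to measure zero). Hence $\max_{1\le k\le n}F_n(k)\ge \frac1n$.

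\textbf{Comparison with the maximum.} In the proof of Theorem \ref{main-thm-p} it was shown that $F_n$ has a unique local maximum on $[1,n+1]$, located at $t=\frac{n+1}{2}$. The argument there gives strict increase before this point and strict decrease after it. Therefore $F_n\!\left(\frac{n+1}{2}\right)\ge F_n(k)$ for every integer $k$, which already yields $F_n\!\left(\frac{n+1}{2}\right)\ge\frac1n$. It remains to make this strict, and I split by parity.

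\emph{$n$ even.} Then $\frac{n+1}{2}$ is not an integer. By strict unimodality, $F_n\!\left(\frac{n+1}{2}\right)>\max_k F_n(k)\ge \frac1n$.

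\emph{$n$ odd, $n\ge3$.} Then $\frac{n+1}{2}$ is an integer. If equality $F_n\!\left(\frac{n+1}{2}\right)=\frac1n$ held, then every $F_n(k)$ would be at most $\frac1n$ while their sum is $1$. This forces $F_n(k)=\frac1n$ for all $k$. But Proposition \ref{H-property}(iv) gives $F_n(1)=\mathcal{H}_1(n)=\frac1{n!}<\frac1n$ for $n\ge3$, a contradiction.

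\textbf{Main obstacle.} The delicate step is the strictness, which rests on the strict unimodality of $F_n$. The induction in the proof of Theorem \ref{main-thm-p} is somewhat sketchy, so I would re-verify it directly. By Corollary \ref{cor-H'}, $F_n'(t)=g(t)-g(t-1)$, where $g=\mathcal{H}'_{\cdot}(n)$ is the Irwin--Hall density. This density is symmetric about $n/2$ and strictly unimodal on $(0,n)$, which can be shown by induction via the convolution formula (vi). It follows that $g(t)>g(t-1)$ exactly when $t<\frac{n+1}{2}$ (for $t$ in the support range), and this is the strictness needed. If that direct check became messy, I would fall back on log-concavity of $g$, which is preserved under convolution with the uniform density.
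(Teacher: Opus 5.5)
Your proposal is correct and follows essentially the same route as the paper: you telescope $\sum_{k=1}^{n}\bigl(\mathcal{H}_k(n)-\mathcal{H}_{k-1}(n)\bigr)=1$ and compare each term with the unique maximum of $t\mapsto\mathcal{H}_t(n)-\mathcal{H}_{t-1}(n)$ at $t=\frac{n+1}{2}$. Your parity split (using $\mathcal{H}_1(n)=\frac{1}{n!}<\frac{1}{n}$ in the odd case) and your Irwin--Hall check of strict unimodality make explicit the strictness that the paper's one-line ``unique maximum'' argument leaves implicit.
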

\begin{proof}
    We have already seen in the proof of Theorem \ref{main-thm-p} that $f_n(t)=\mathcal{H}_t(n)-\mathcal{H}_{t-1}(n)$ has the unique maximum $f_n(\frac{n+1}{2})$. Since $\mathcal{H}_0(n)=0$ and $\mathcal{H}_n(n)=1$, we have
    $$1=\sum_{i=1}^nf_n(i) < n f_n(\frac{n+1}{2}).$$
    Therefore, $$ \frac{\mu(\mathcal{O}_f)}{\tau(f)} \leq \frac{1}{\mathcal{H}_{\frac{n+1}{2}}(n)-\mathcal{H}_{\frac{n-1}{2}}(n)}=\frac{1}{f_n(\frac{n+1}{2})}<n.$$
\end{proof}

Dealing with the equality in (\ref{main-leq}) is more complicated. We have the following criterion. The proof is inspired by and improves upon the proof of \cite[Theorem 3.8]{lower-bound-of-s}.

\begin{proposition}\label{lemma-eq-hold}
    Let $f \in R=K[[x_1,\dots,x_n]]$ be an isolated hypersurface singularity. For $n \geq 3$, if the equality in (\ref{main-leq}) holds, then $(j(f):f) \subseteq \overline{j(f)}$.
\end{proposition}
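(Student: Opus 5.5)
The plan is to reprove the inequality (\ref{main-choose-t}) by hand at the level of lengths, keeping track of every estimate that is not an equality. Equality in (\ref{main-leq}) will then force each of these slack terms to be $o(p^{en})$, and a nonzero slack term will be produced from any $g \in (j(f):f)\setminus\overline{j(f)}$.

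\textbf{Setup.} As in the proof of Theorem \ref{main-thm-p}, multiply $f$ by a unit so that $f \in \overline{j(f)}$. Put $J=j(f)$, $I=tj(f)=J+(f)$ and $q=p^e$.
\begin{itemize}
\item $J$ is a parameter ideal in the regular ring $R$, so its associated graded ring is a polynomial ring $(R/J)[T_1,\dots,T_n]$.
\item The quantity $e_{HK}(I)=\tau(f)$ is unchanged by passing to $e_{n+1}(I)$.
\end{itemize}
For $k=\lceil tq\rceil$ with $t=\frac{n+1}{2}$, write
$$\lambda\big(R/(I^{k}+I^{[q]})\big)=\lambda\big(R/(J^{k}+J^{[q]})\big)-\lambda\big((I^{k}+I^{[q]})/(J^{k}+J^{[q]})\big).$$
The first term is computed exactly by $\mathcal{H}$-functions, since $J$ is a parameter ideal. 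For the second term, use $I^k=J^k+fI^{k-1}$ and $I^{[q]}=J^{[q]}+(f^q)$. It is the image of multiplication by $f$ on $R/(J^{k-1}+J^{[q]})$, plus a contribution from $f^q$. The $f^q$ contribution is controlled as in \cite[Theorem 3.8]{lower-bound-of-s}, using $f\in\overline{J}$ and $t\le s$.

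\textbf{Kernel term.} The Miller--Taylor argument bounds the multiplication-by-$f$ image by the length of its source. The difference from the exact value is precisely the length of the kernel
$$\mathcal{K}_q=\big((J^{k}+J^{[q]}):f\big)\big/\big(J^{k-1}+J^{[q]}\big).$$
So the first step is to write $e_{HK}(I)-(\mathcal{H}_t(n)-\mathcal{H}_{t-1}(n))e(I)$ as
$$\lim_{q\to\infty}\frac{\lambda(\mathcal{K}_q)}{q^{n}} \;+\;(\text{nonnegative terms}),$$
where the nonnegative terms come from the other inequalities in that proof. Equality in (\ref{main-leq}) then gives $\lambda(\mathcal{K}_q)=o(q^n)$. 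It also gives equality in all the other estimates, including the choice of $t$. Since $t=\frac{n+1}{2}$ is the unique maximum of $f_n$, equality is achievable only at that one value.

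\textbf{Producing a large kernel.} Suppose, for contradiction, that $g\in(J:f)$ with $g\notin\overline{J}$. Then $fgJ^{m}\subseteq J^{m+1}$, so $gJ^{k-1}$ maps into $\mathcal{K}_q$. Choose a Rees valuation $v$ of $J$ with $v(g)<v(J)$. Then $gJ^{k-1}\not\subseteq\overline{J^{k}}$, and, working in the polynomial ring $\operatorname{gr}_J(R)$, the image of $gJ^{k-1}$ in $\mathcal{K}_q$ has length of order $q^{n-1}$. This is not yet $q^n$. To upgrade, I would vary $t$: run the same argument for every $t'$ in a small interval $[t,t+\varepsilon]$, equivalently for every $k'\in[\lceil tq\rceil,\lceil (t+\varepsilon)q\rceil]$. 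Summing the $q^{n-1}$-size contributions over these $\asymp \varepsilon q$ layers gives a defect of size $\asymp\varepsilon q^n$. One must check that these defects add up rather than cancel. The intended mechanism is that $e_{n+1}(I)$ and $e(I)$ do not depend on $t$, so a defect appearing at level $k'$ can be transported back to $h_{n+1}$ through the chain of inclusions $I^{k'}+I^{[q]}$.

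\textbf{Role of $n\ge 3$.} The hypothesis $n\ge 3$ should enter here. It ensures $\frac{n+1}{2}\ge 2$, so the whole interval $[t,t+\varepsilon]$ stays in the range $1\le t'\le s$ where Theorem \ref{lower-bound} applies. It also ensures that the relevant $\mathcal{H}'$-weights from Corollary \ref{cor-H'} are strictly positive there.

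\textbf{Main obstacle.} The hardest step will be converting the pointwise $q^{n-1}$-size kernel into a genuine $q^n$ loss in the single limit defining $e_{HK}(I)$. My first attempt will be to prove directly that
$$\lambda\big(R/(I^{[q]}+(J:f)J^{k-1})\big)\le \lambda\big(R/I^{[q]}\big)-c\,q^{n}$$
for some $c>0$, using the Rees valuation $v$ to bound the monomial-type count in $\operatorname{gr}_J(R)$.
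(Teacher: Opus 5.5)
There is a genuine gap. It sits exactly where you place the slack.

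\textbf{The slack term is misidentified.} Your kernel $\mathcal{K}_q$ is not even a quotient in general, because $f\in\overline{J}$ does not give $fJ^{k-1}\subseteq J^{k}+J^{[q]}$. For example, take $J=(x^2,y^2)$, $f=xy$, $k=\lceil\frac32 q\rceil$, and the monomial $x^{2a+1}y^{2b+1}$ with $a+b=k-1$ and $a,b<q$. More seriously, everything produced by multiplication by $f$ is negligible. Because $J$ is a reduction of $I$,
$$\lambda\bigl((I^{k}+J^{[q]})/(J^{k}+J^{[q]})\bigr)\le\lambda(I^{k}/J^{k})=O(q^{n-1})$$
for every $k$. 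So your $q^{n-1}$ estimate is the best one can get there, and equality in (\ref{main-leq}) carries no information about this term. Since the total discrepancy between $I$-powers and $J$-powers at any single level is $O(q^{n-1})$, no summation over levels can produce an order-$q^n$ loss in the single length defining $e_{HK}(I)$.

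The $q^n$-size slack lives in the $f^q$ step that you set aside as ``controlled as in Miller--Taylor''. First enlarge $I^{\lceil (n+1)q\rceil}$ to $I^{\lceil tq\rceil}$; note that your decomposition at $k=\lceil tq\rceil$ computes $h_t(I)$, not $h_{n+1}(I)=e_{HK}(I)$. Then
$$(I^{\lceil tq\rceil}+I^{[q]})/(I^{\lceil tq\rceil}+J^{[q]})\cong R/\bigl((I^{\lceil tq\rceil}+J^{[q]}):f^q\bigr),$$
and this colon ideal contains $I^{\lceil (t-1)q\rceil}+(J:f)^{[q]}$, not merely $I^{\lceil (t-1)q\rceil}+J^{[q]}$. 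Thus an element $g\in J:f$ enters through its Frobenius power $g^q$, not through $gJ^{k-1}$. The bound becomes
$$h_{t-1}(I,J:f)=\mathcal{H}_{t-1}(n)\,e_{t-1}(J,J:f)\le\mathcal{H}_{t-1}(n)\,e(J),$$
and equality in (\ref{main-leq}) forces $e_{t-1}(J,J:f)=e_{t-1}(J)$.

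\textbf{The upgrade step is missing.} Turning that equality of multiplicities into a statement about elements is exactly the ``$q^{n-1}$ versus $q^n$'' problem you leave open, and the layer-summing heuristic does not solve it. The needed input is Proposition~\ref{s-closure-criterion} (Taylor). Since $R$ is an $F$-finite complete domain, $e_{t-1}(J,J:f)=e_{t-1}(J)$ gives $J:f\subseteq J^{\text{cl}_{t-1}}=J^{\text{w.cl}_{t-1}}$.

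This is also where $n\ge 3$ is used: it gives $t-1=\frac{n-1}{2}\ge 1$. That is needed for the $s$-closure to be defined at all. It also gives $cr^q\in J^{\lceil (t-1)q\rceil}+J^{[q]}\subseteq J^{q}$ for all $q$, whence $r\in\overline{J}$. Your stated reason for $n\ge 3$ (keeping $[t,t+\varepsilon]$ inside $[1,s]$) holds equally for $n=2$, so it cannot be the real role of the hypothesis.
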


\begin{proof}
    We still assume $f \in \overline{j(f)}$. We denote $I=tj(f),\ J=j(f)$. We denote $q=p^e$ and by $q \rightarrow \infty$, we mean $q$ moves through $e \rightarrow \infty$. Then we have
    \begin{equation}\label{total-three-terms}
        \begin{aligned}
            \lambda(\frac{I^{\lceil sq \rceil}+I^{[q]}}{J^{\lceil sq \rceil}+J^{[q]}}) &\leq \lambda(\frac{I^{\lceil tq \rceil}+I^{[q]}}{J^{\lceil sq \rceil}+J^{[q]}})\\
            &=\lambda(\frac{I^{\lceil tq \rceil}+I^{[q]}}{I^{\lceil tq \rceil}+J^{[q]}})+\lambda(\frac{I^{\lceil tq \rceil}+J^{[q]}}{J^{\lceil tq \rceil}+J^{[q]}})+\lambda(\frac{J^{\lceil tq \rceil}+J^{[q]}}{J^{\lceil sq \rceil}+J^{[q]}}).
        \end{aligned}
    \end{equation}
    For the second term, since $J$ is a reduction of $I$, $\lambda(\frac{I^{\lceil tq \rceil}+J^{[q]}}{J^{\lceil tq \rceil}+J^{[q]}})=o(q^n)$. For the third term, since $J$ is a parameter ideal and $(R,\mathfrak{m})$ is Cohen-Macaulay, $$\lim_{q \rightarrow \infty} \frac{1}{q^n}\lambda(\frac{J^{\lceil tq \rceil}+J^{[q]}}{J^{\lceil sq \rceil}+J^{[q]}})=h_s(J)-h_t(J)=(\mathcal{H}_s(n)-\mathcal{H}_t(n))e(J)=(\mathcal{H}_s(n)-\mathcal{H}_t(n))e(I).$$

    Finally, for the first term, we have
    \begin{equation}
        \begin{aligned}
            \lambda(\frac{I^{\lceil tq \rceil}+I^{[q]}}{I^{\lceil tq \rceil}+J^{[q]}}) &=\lambda(\frac{I^{\lceil tq \rceil}+J^{[q]}+f^q}{I^{\lceil tq \rceil}+J^{[q]}})\\
            &=\lambda(\frac{R}{({I^{\lceil tq \rceil}+J^{[q]}}):f^q})\\
            &\leq \lambda(\frac{R}{(I^{\lceil tq \rceil}:f^q)+J^{[q]}:f^q})\\
            &\leq \lambda(\frac{R}{I^{\lceil (t-1)q \rceil}+(J:f)^{[q]}}).
        \end{aligned}
    \end{equation}
    Therefore, we have
    \begin{equation}\label{first-term-leq}
    \begin{aligned}
        \lim_{q \rightarrow \infty} \frac{1}{q^n}\lambda(\frac{I^{\lceil tq \rceil}+I^{[q]}}{I^{\lceil tq \rceil}+J^{[q]}})&=\lim_{q \rightarrow \infty} \frac{1}{q^n}\lambda(\frac{R}{I^{\lceil (t-1)q \rceil}+(J:f)^{[q]}})\\
        &=h_{t-1}(I,(J:f))=\mathcal{H}_{t-1}(n)e_{t-1}(I,(J:f))=\mathcal{H}_{t-1}(n)e_{t-1}(J,(J:f))\\
        &\leq \mathcal{H}_{t-1}(n)e_{t-1}(J,J)=\mathcal{H}_{t-1}(n)e(J)=\mathcal{H}_{t-1}(n)e(I).
    \end{aligned}
    \end{equation}
    Following (\ref{total-three-terms}), all the above together implies
    $$h_s(J)-h_s(I)=\lim_{q \rightarrow \infty}\frac{1}{q^n}\lambda(\frac{I^{\lceil sq \rceil}+I^{[q]}}{J^{\lceil sq \rceil}+J^{[q]}}) \leq \mathcal{H}_{t-1}(n)e(I)+(\mathcal{H}_s(n)-\mathcal{H}_t(n))e(I).$$
    Thus,
    \begin{equation}\label{total-sum}
        \begin{aligned}
            e_s(I)=\frac{h_s(I)}{\mathcal{H}_s(n)} &\geq \frac{1}{\mathcal{H}_s(n)}(h_s(J)-(\mathcal{H}_{t-1}(n)+\mathcal{H}_s(n)-\mathcal{H}_t(n))e(I))\\
            &=(\frac{\mathcal{H}_t(n)-\mathcal{H}_{t-1}(n)}{\mathcal{H}_{s}(n)})e(I).
        \end{aligned}
    \end{equation}

    The fact that the equality in (\ref{main-leq}) holds implies that the equality in (\ref{total-sum}) holds for $s=n+1,t=\frac{n+1}{2}$, which in turn implies that the equality in (\ref{first-term-leq}) holds. That is, $e_{t-1}(J,(J:f))=e_{t-1}(J,J)$. Clearly $(R, \mathfrak{m})$ is an $F$-finite complete domain. Therefore, by Proposition \ref{s-closure-criterion}, we have $(J:f) \subseteq J^{\text{cl}_{t-1}}=J^{\text{w.cl}_{t-1}}$, where $t-1=\frac{n-1}{2} \geq 1$.

    By definition, for every $r \in (J:f)$, there exists $c \in R^{\circ}$, such that 
    $$cr^{q} \in J^{\lceil(t-1)q \rceil}+J^{[q]} \subset J^q $$
    for every power $q$ of $p$, which implies $r \in \overline{J}$. Therefore, we have $(J:f) \subseteq \overline{J}$ if the equality in (\ref{main-leq}) holds.
\end{proof}

However, we have not found any examples indicating $(J:f) \subseteq \overline{J}$. Therefore, we propose the following conjecture.

\begin{conjecture}\label{conj-j-closure}
    Let $f \in K[[x_1,\dots,x_n]]$ define an isolated hypersurface singularity over a field $K$ of arbitrary characteristic, and let $j(f)$ denotes it Jacobian ideal. Assume that $f\in \overline{j(f)}$. Then $j(f):f \nsubseteq \overline{j(f)}$.
\end{conjecture}

If Conjecture \ref{conj-j-closure} holds for $n \geq 3$, then $$\frac{\mu(\mathcal{O}_f)}{\tau(f)} < \frac{1}{\mathcal{H}_{\frac{n+1}{2}}(n)-\mathcal{H}_{\frac{n-1}{2}}(n)}.$$

\subsection{The ideal $(j(f):f)$}

In \cite{conjecture}, H. Hassanzadeh, A.N. Nejad, and A. Simis raised a conjecture 
\begin{conjecture}\label{conj-J-I}\textup{(\cite[Conjecture 3.6]{conjecture})}
    Let $f \in R=k[[x_1,\dots,x_n]]$ be an isolated hypersurface singularity, where $k$ is a field of characteristic zero. Then $(j(f):f) \nsubseteq tj(f)$.
\end{conjecture}

We can see that Conjecture \ref{conj-j-closure} is not only a stronger version of Conjecture \ref{conj-J-I}, since $tj(f) \subset \overline{j(f)}$ always holds if $f \in \overline{j(f)}$, but also generalizes Conjecture \ref{conj-J-I} to the case of positive characteristic.

In \cite{conjecture}, H. Hassanzadeh, A.N. Nejad, and A. Simis showed that Conjecture \ref{conj-J-I} holds for $n=2$. In this paper, we can show Conjecture \ref{conj-j-closure} holds for $n=2$. We first prove the case of positive characteristic. The case of characteristic zero will be left to the next section.

First we recall the Briançon-Skoda Theorem.

\begin{theorem}\label{skoda-thm}
    Let $R$ be a regular ring, $J \subset R$ be an ideal with $n$ generators. Then $\overline{J^{n+k-1}} \subseteq J$ for every $k \in \mathbb{N}$.
\end{theorem}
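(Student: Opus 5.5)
Since $J^k\subseteq J$, it suffices to prove the classical form $\overline{J^{n+k-1}}\subseteq J^k$. I would do this first in characteristic $p>0$, which is the case used in this section, by the tight closure argument of Hochster--Huneke. For characteristic zero I would not reprove it and would instead cite Lipman--Sathaye (or Lipman--Teissier in the analytic case). We may localize and assume $R$ is a regular local domain, since membership in an ideal is a local property and integral closure commutes with localization.

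\textbf{Step 1 (integral closure condition).} Take $x\in\overline{J^{n+k-1}}$. In a Noetherian domain, membership in the integral closure gives a nonzero $c\in R$ with $c\,x^m\in (J^{n+k-1})^m=J^{(n+k-1)m}$ for all $m\geq 1$. To see this, note $x$ lies in the integral closure of an ideal $I$ exactly when $I(I+(x))^{N}=(I+(x))^{N+1}$ for some $N$. Then any nonzero $c\in I^N$ works for every $m$.

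\textbf{Step 2 (pigeonhole).} Let $J=(g_1,\dots,g_n)$ and put $m=q=p^e$. The ideal $J^{(n+k-1)q}$ is generated by monomials $g_1^{a_1}\cdots g_n^{a_n}$ with $\sum a_i=(n+k-1)q$. Write $a_i=qb_i+r_i$ with $0\le r_i\le q-1$. Then
\[
q\sum_i b_i \;\geq\; (n+k-1)q-n(q-1)\;=\;(k-1)q+n\;>\;(k-1)q,
\]
so $\sum b_i\geq k$. Hence each such monomial lies in $(J^k)^{[q]}$. It follows that $c\,x^q\in (J^k)^{[q]}$ for all $q$, which means $x\in (J^k)^*$.

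\textbf{Step 3 (tight closure in a regular ring is trivial).} By Kunz's theorem, Frobenius is flat on a regular ring. Flatness gives $(I^{[q]}:c^{q})... $ more precisely $(I^{[q]}:x^q)=(I:x)^{[q]}$. So $c\in (J^k:x)^{[q]}$ for all $q$. If $(J^k:x)$ were a proper ideal, it would lie in $\mathfrak{m}$, and then $c\in\bigcap_q \mathfrak{m}^{[q]}\subseteq\bigcap_q\mathfrak{m}^q=0$ by Krull's intersection theorem. This contradicts $c\neq 0$. Therefore $x\in J^k\subseteq J$.

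The only real obstacle is Step 3, the triviality of tight closure via flatness of Frobenius; the rest is bookkeeping. In characteristic zero the statement is a standard citation (Lipman--Sathaye), or it can be obtained by reduction modulo $p$.
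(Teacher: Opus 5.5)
Your proposal is correct and follows the same route as the paper. The paper gives no argument of its own: it cites Brian\c{c}on--Skoda, Lipman--Sathaye for arbitrary regular rings, and the Hochster--Huneke tight-closure proof in positive characteristic. Your Steps 1--3 are exactly that tight-closure proof written out: the multiplier $c$ with $cx^m\in J^{(n+k-1)m}$, the pigeonhole inclusion $J^{(n+k-1)q}\subseteq (J^k)^{[q]}$, and the triviality of tight closure via Kunz flatness and Krull's intersection theorem.

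Two things to keep in mind. First, the reduction via $J^k\subseteq J$ needs $k\geq 1$; for $k=0$ the statement is false. Second, for mixed-characteristic regular rings it is the Lipman--Sathaye citation that does the work, not reduction modulo $p$.
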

\begin{proof}
    The original version, namely the case where $R$ is the coordinate ring of a smooth variety over $\mathbb{C}$, is given in \cite{skoda}. This was generalized to all regular rings in \cite{regular-skoda-jacobian-ideal}. For a simple method in positive characteristic, see \cite{tight-closure-skoda}.
\end{proof}

\begin{corollary}\label{cor-fn-j(f)}
    Let $f \in K[[x_1,\dots,x_n]]$ be an isolated hypersurface singularity over a field $K$ of arbitrary characteristic. If $f \in \overline{j(f)}$, then $f^n \in j(f)$.
\end{corollary}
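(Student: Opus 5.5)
The plan is to apply the Briançon--Skoda Theorem \ref{skoda-thm} with $J=j(f)$, which has $n$ generators $\frac{\partial f}{\partial x_1},\dots,\frac{\partial f}{\partial x_n}$, in the regular ring $R=K[[x_1,\dots,x_n]]$. Taking $k=1$ there gives $\overline{J^{n}}\subseteq J$.

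The only remaining task is to check that $f^n\in\overline{J^n}$. The hypothesis says $f\in\overline{J}$. Integral closure is multiplicative: $\overline{I}\,\overline{I'}\subseteq\overline{II'}$. One way to see this is through the valuative criterion: $x\in\overline{I}$ if and only if $v(x)\ge v(I)$ for every Rees valuation (or every discrete valuation) $v$ of the domain $R$. Alternatively, one can multiply integral dependence equations directly.

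Using multiplicativity, we get $v(f^n)=n\,v(f)\ge n\,v(J)=v(J^n)$ for every such $v$, so $f^n\in\overline{J^n}$. Combining with the first step,
$$f^n\in\overline{j(f)^n}\subseteq j(f),$$
which is the claim.

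I do not expect a real obstacle here. The one point to be careful about is that Theorem \ref{skoda-thm} is stated for regular rings in arbitrary characteristic, which the cited references cover, and that $R$ is indeed regular. The isolatedness hypothesis is not essential to this argument: it only ensures that $j(f)$ is an honest ideal with $n$ generators, which is true regardless.
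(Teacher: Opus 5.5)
Your proof is correct and is essentially the paper's argument: the paper also writes $f^n\in\overline{j(f)}^{\,n}\subseteq\overline{j(f)^n}\subseteq j(f)$, using multiplicativity of integral closure together with Theorem~\ref{skoda-thm} at $k=1$. Your valuative justification of the middle inclusion only spells out a standard step that the paper uses without comment.
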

\begin{proof}
    By Theorem \ref{skoda-thm}, $f^n \in  \overline{j(f)}^n \subseteq \overline{j(f)^n} \subseteq j(f)$.
\end{proof}

\begin{proposition}\label{conj-j-closure-p-n=2}
    Conjecture \ref{conj-j-closure} holds for $n=2$ in fields of positive characteristic.
\end{proposition}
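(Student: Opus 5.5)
The plan is to argue by contradiction, combining a length count from linkage with the bound $\frac{\mu(\mathcal{O}_f)}{\tau(f)}\leq \frac43$ of Theorem \ref{main-thm-p} for $n=2$.

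\textbf{Setup.} Put $J=j(f)$ and $I=tj(f)=J+(f)$, and assume $f\in\overline{J}$.

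If $f\in J$, then $(J:f)=R$. Since $\overline{J}$ is a proper ideal, $(J:f)\nsubseteq\overline{J}$ and we are done. So assume from now on that $f\notin J$. By Corollary \ref{cor-fn-j(f)} we still have $f^2\in J$.

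\textbf{Step 1: compute $\lambda(R/(J:f))$.} Since $\tau(f)<\infty$ and $f\in\overline J$, the ideal $J$ is $\mathfrak m$-primary. It has two generators in the two-dimensional regular ring $R$, so it is a parameter ideal. Hence $R/J$ is Gorenstein Artinian and $\mu(f)=\lambda(R/J)=e(J)=e(I)=\mu(\mathcal{O}_f)$.

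Multiplication by $f$ gives an isomorphism $R/(J:f)\cong (J+(f))/J = I/J$. Therefore
$$\lambda(R/(J:f))=\lambda(R/J)-\lambda(R/I)=\mu(\mathcal{O}_f)-\tau(f).$$

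\textbf{Step 2: derive the contradiction.} Suppose $(J:f)\subseteq\overline J$. Then
$$\mu(\mathcal{O}_f)-\tau(f)\geq \lambda(R/\overline J).$$
Since $e(\overline J)=e(J)=\mu(\mathcal{O}_f)$, it suffices to show the strict inequality
$$\lambda(R/\overline J)>\tfrac12\, e(\overline J).$$
Granting it, we get $\mu(\mathcal{O}_f)-\tau(f)>\frac12\mu(\mathcal{O}_f)$, i.e. $\frac{\mu(\mathcal{O}_f)}{\tau(f)}>2$. This contradicts the bound $\frac43$ from Theorem \ref{main-thm-p} (or Liu-type bounds $<n=2$ via Proposition \ref{bound-less-n}).

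\textbf{Main obstacle: the inequality $e(L)<2\lambda(R/L)$.} This is needed for an integrally closed $\mathfrak m$-primary ideal $L$ in the two-dimensional regular local ring $R$.

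My first approach is Zariski–Lipman theory of complete ideals, valid in any characteristic for two-dimensional regular local rings. It gives the Hoskin–Deligne type formula $e(L)=2\lambda(R/L)-\operatorname{ord}(L)$. Since $\operatorname{ord}(L)\geq 1$, the strict inequality follows. The check $L=\mathfrak m^k$, with $e=k^2$ and $\lambda=\binom{k+1}2$, is consistent with this formula.

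As an alternative, I would use that complete ideals in a two-dimensional RLR have reduction number one (Lipman–Teissier, Huneke). For a minimal reduction $Q$ this gives $L^2=QL$, hence $e_1(L)=\lambda(L/Q)$ and $e(L)=\lambda(R/L)+\lambda(L/Q)$. Contractedness gives $\mu(L)=\operatorname{ord}(L)+1$, and one then needs $\lambda(L/Q)<\lambda(R/L)$.

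I expect the cleanest route is simply to cite the Hoskin–Deligne/Lipman formula. I would also check that it applies over an algebraically closed field of characteristic $p$, which holds because the theory is characteristic-free.
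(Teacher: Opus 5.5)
Your skeleton is the paper's argument: argue by contradiction, use multiplication by $f$ to get $\lambda(R/(J:f))=\mu(\mathcal{O}_f)-\tau(f)$, bound a multiplicity by twice a colength for an ideal whose integral closure is $\overline{J}$, and contradict the $\frac43$ bound of Theorem~\ref{main-thm-p}. The problem is how you justify the step you call the main obstacle.

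\textbf{The cited formula is wrong.} The identity $e(L)=2\lambda(R/L)-\operatorname{ord}(L)$ is false in general. For the complete ideal $L=(x,y^2)$ one has $e(L)=\lambda(R/L)=2$ and $\operatorname{ord}(L)=1$, so the right-hand side would be $3$. Combining the Hoskin--Deligne formula with $e(L)=\sum_S\operatorname{ord}_S(L^S)^2$ actually gives
$$2\lambda(R/L)-e(L)=\sum_S\operatorname{ord}_S(L^S),$$
where the sum runs over \emph{all} infinitely near base points $S$ of $L$, weighted by residue degrees, which are $1$ here. Your test case $\mathfrak m^k$ has a single base point, which is why it did not expose the error. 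The corrected identity still gives $e(L)<2\lambda(R/L)$, so your argument can be repaired. Your ``alternative'' via reduction number one does not help, however: the inequality $\lambda(L/Q)<\lambda(R/L)$ is just a restatement of $e(L)<2\lambda(R/L)$.

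\textbf{None of this machinery is needed.} Strictness is irrelevant, since $\mu(\mathcal{O}_f)-\tau(f)\ge\frac12\mu(\mathcal{O}_f)$ already forces $\mu(\mathcal{O}_f)/\tau(f)\ge 2>\frac43$. The non-strict inequality $e(L)\le 2\lambda(R/L)$ holds for every $\mathfrak m$-primary ideal $L$ of $R$, complete or not. In characteristic $p$ it reads $e(L)\le 2!\,e_{HK}(L)=2\lambda(R/L)$ by Proposition~\ref{eHK-property}(ii),(iii); in general it is Lech's inequality.

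The paper applies this directly to $L=(J:f)$. Since $J\subseteq (J:f)\subseteq\overline{J}$, the multiplicity of $(J:f)$ is $e(J)=\mu(\mathcal{O}_f)$. This avoids passing to $\overline{J}$ and invoking Zariski--Lipman theory altogether. The paper also uses Brian\c{c}on--Skoda to upgrade the containment to $(J:f)=\overline{J}$, but as your version shows, the containment alone suffices.
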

\begin{proof}
    If not, then $(j(f):f) \subseteq \overline{j(f)}$. We have $\overline{j(f)}^2 \subseteq j(f)$ by Theorem \ref{skoda-thm}. Since $f \in \overline{j(f)}$, we have $\overline{j(f)} \subseteq (j(f):f)$. Thus, we get $(j(f):f)=\overline{j(f)}$.

    We have the exact sequence
    $$0 \rightarrow \frac{j(f):f}{j(f)} \rightarrow \frac{R}{j(f)} \xrightarrow{\cdot f} \frac{R}{j(f)} \rightarrow \frac{R}{tj(f)} \rightarrow 0. $$
    Then we have $\lambda( R/(j(f):f))=\lambda (R/j(f))-\lambda(R/tj(f))=\mu(\mathcal{O}_f)-\tau(f)$.

    Therefore, we have
    $$2 \geq \frac{e(j(f):f)}{e_{HK}(j(f):f)}=\frac{e(\overline{j(f)})}{\lambda(j(f):f)}=\frac{\mu(\mathcal{O}_f)}{\mu(\mathcal{O}_f)-\tau(f)}=\frac{1}{1-\frac{\tau(f)}{\mu(\mathcal{O}_f)}} \geq 4, $$
    where the first and last inequalities come from Proposition \ref{eHK-property}(iii) and Theorem \ref{main-thm-p}. Thus, it leads to a contradiction.
\end{proof}

\begin{remark}
    Although Conjecture \ref{conj-j-closure} holds for $n=2$ in positive characteristic, we cannot deduce that the equality in (\ref{main-leq}) fails for $n=2$.
\end{remark}

\iffalse
\begin{theorem}
    The equality in (\ref{main-leq}) cannot hold for $n=2$. That is, for an isolated hypersurface singularity $f \in R=K[[x,y]]$, we have
    \begin{equation}
        \frac{\mu(\mathcal{O}_f)}{\tau(f)} < \frac{4}{3}.
    \end{equation}
\end{theorem}
\begin{proof}
    Suppose that there exists $f$ such that the equality in (\ref{main-leq}) holds, i.e. $$\frac{\mu(\mathcal{O}_f)}{\tau(f)} = \frac{4}{3} .$$ We still assume $f \in \overline{j(f)}$. Then $j(f) \subset (j(f):f) \subset \overline{j(f)}$ by Lemma \ref{lemma-eq-hold}. Therefore, $\overline{(j(f):f)}=\overline{j(f)}$ and thus $e(j(f):f)=e(j(f)).$

    We have the exact sequence
    $$0 \rightarrow \frac{j(f):f}{j(f)} \rightarrow \frac{R}{j(f)} \xrightarrow{\cdot f} \frac{R}{j(f)} \rightarrow \frac{R}{tj(f)} \rightarrow 0. $$
    Then we have $\dim_K R/(j(f):f)=\dim_K R/j(f)-\dim_K R/tj(f)=\mu(\mathcal{O}_f)-\tau(f)$.

    Putting $I=(j(f):f)$ in Theorem \ref{lower-bound} and choosing $s=2$, we have
    \begin{equation}
        \begin{aligned}
            \dim_K R/(j(f):f)&=e_{HK}(j(f):f)=e_n(j(f):f)\\
            &\geq (\mathcal{H}_t(2)-r\mathcal{H}_{t-1}(2))e(j(f):f)\\
            &= (\mathcal{H}_t(2)-r\mathcal{H}_{t-1}(2))e(j(f))
        \end{aligned}
    \end{equation}
    for every $1 \leq t \leq 2$. Then
    $$\mu(\mathcal{O}_f)-\tau(f)=\frac{1}{4}\mu(\mathcal{O}_f) \geq (\mathcal{H}_t(n)-r\mathcal{H}_{t-1}(n))\mu(\mathcal{O}_f).$$
    Choosing $t=1$, we get $\frac{1}{4} \geq \frac{1}{2}$, a contradiction.
\end{proof}
\fi

\section{The quotient of Milnor number and Tjurina number in characteristic zero}
In this section, we obtain our main result in characteristic zero from Theorem \ref{main-thm-p} by reduction modulo $p$. In the following, $K$ denotes a field of characteristic zero, $R=K[[x_1,\dots,x_n]]$ and $\mathfrak{m}=(x_1,\dots,x_n)$.

First we recall the finite determinacy theorem.

\begin{definition}
    Two power series $f,g \in R$ are called {\em right equivalence} (denoted by $f \sim_r g$) if there exists $\phi \in Aut(R)$ such that $f=\phi(g)$.

    Two power series $f,g \in R$ are called {\em contact equivalence} (denoted by $f \sim_c g$) if there exist $\phi \in Aut(R)$ and $U \in R^{\times}$ such that $f=U\cdot\phi(g)$.
\end{definition}
Note that when $f$ is isolated, both $\mu(f)$ and $\tau(f)$ are invariants under right (or contact) equivalence in characteristic zero.

A power series $f \in R$ is called {\em right (or contact) $k$-determined} if for all $g \in R$ such that $j_k(f)=j_k(g)$ we have $f \sim_r g$ (or $f \sim_c g$). $f$ is called finitely determined if there exists a $k$ such that $f$ is $k$-determined.

\begin{theorem}\label{finite-deter}\textup{(\cite[Theorem 2.23]{finite-deter})} 
    For $f \in \mathfrak{m}$, $f$ is finitely contact determined if and only if $\tau(f)<\infty$.
\end{theorem}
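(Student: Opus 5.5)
The plan is to prove both implications through the infinitesimal criterion for contact equivalence, working with the tangent space to the contact orbit,
$$T(f)=(f)+\mathfrak{m}\,j(f)\subseteq tj(f).$$
Since $\mathfrak{m}\,tj(f)\subseteq T(f)\subseteq tj(f)$, the condition $\tau(f)<\infty$ holds if and only if $T(f)$ is $\mathfrak{m}$-primary. This is the dictionary I will use in both directions.

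\emph{($\tau(f)<\infty\Rightarrow$ finite determinacy.)} Assume $\tau(f)<\infty$. Then $\mathfrak{m}^k\subseteq tj(f)$ for some $k$, and so $\mathfrak{m}^{k+1}\subseteq T(f)$. Take $g$ with $j_N(g)=j_N(f)$, where $N=2k+1$ (or any sufficiently large $N$), and join them by $f_t=f+t(g-f)$, $t\in[0,1]$, with $h=g-f\in\mathfrak{m}^{N+1}$.
\begin{itemize}
\item \emph{Step 1: a uniform tangent-space inclusion.} Every partial derivative of $f_t$ differs from that of $f$ by an element of $\mathfrak{m}^{N}$, and $f_t$ differs from $f$ by an element of $\mathfrak{m}^{N+1}$. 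Hence
$$\mathfrak{m}^{k+1}\subseteq T(f)\subseteq T(f_t)+\mathfrak{m}^{k+2},$$
once $N\ge k+1$. Nakayama's lemma over $K[t]_{\mathrm{loc}}[[x]]$ then gives $\mathfrak{m}^{k+1}\subseteq (f_t)+\mathfrak{m}\,j(f_t)$, uniformly for $t$ in a neighbourhood of any fixed $t_0$. Because the parameter ring has $t$ localized, this works for all $t_0\in K$ simultaneously.
\item \emph{Step 2: solve the homological equation.} Since $h\in\mathfrak{m}^{N+1}\subseteq\mathfrak{m}^{k+1}$, we can write
$$h=-a(t,x)\,f_t-\sum_i \xi_i(t,x)\,\partial_i f_t,\qquad \xi_i\in\mathfrak{m}^2 .$$
The condition $\xi_i\in\mathfrak{m}^2$ comes from the choice of $N$ via a second application of Nakayama's lemma with $\mathfrak{m}\cdot\mathfrak{m}^{k+1}$.
\item \emph{Step 3: integrate.} Integrate the time-dependent vector field $\xi$ to a family $\phi_t$ of automorphisms, and solve the linear ODE $\dot U=aU$ for the unit $U_t$. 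Then $\frac{d}{dt}\bigl(U_t\,\phi_t^*f_t\bigr)=0$, so $g=f_1\sim_c f_0=f$.
\end{itemize}
In the formal setting the integration can be done degree by degree: $\xi\in\mathfrak{m}^2$ guarantees that each coefficient is determined by finitely many lower ones. Alternatively, one can replace the flow by an inductive jet-by-jet construction, killing $h$ modulo $\mathfrak{m}^{N+j}$ at step $j$ and taking the $\mathfrak{m}$-adic limit of the composed automorphisms and units.

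\emph{(Finite determinacy $\Rightarrow\tau(f)<\infty$.)} Suppose $f$ is contact $k$-determined, and pass to the jet space $J_{M}=R/\mathfrak{m}^{M+1}$ with $M=k+1$. The contact group acts on $J_M$ through a finite-dimensional algebraic group $\mathcal{K}_M$. Determinacy says that the orbit $\mathcal{K}_M\cdot j_M(f)$ contains the affine subspace $j_M(f)+\mathfrak{m}^{k+1}/\mathfrak{m}^{M+1}$. Comparing tangent spaces at $j_M(f)$ (the orbit is a smooth locally closed subvariety, since $\operatorname{char}K=0$), we get
$$\mathfrak{m}^{k+1}\subseteq (f)+\mathfrak{m}\,j(f)+\mathfrak{m}^{k+2}.$$
Nakayama's lemma then gives $\mathfrak{m}^{k+1}\subseteq T(f)\subseteq tj(f)$, hence $\tau(f)<\infty$.

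\emph{Main obstacle.} I expect the hardest step to be the first implication, specifically the uniformity in $t$ combined with the integration over the whole interval $[0,1]$. The concern is that Nakayama's lemma gives the inclusion only locally in $t$. I would handle this as follows: the coefficients $a,\xi_i$ can be chosen polynomial in $t$ after inverting a polynomial that does not vanish on $K$, and the jet-by-jet inductive construction avoids analytic flows altogether. If that fails, I would fall back on the standard argument from the cited reference, which is stated there for $K$ of characteristic zero.
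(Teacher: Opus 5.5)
The paper gives no argument for this statement; it is quoted from the cited reference. Your outline is the standard proof behind it: Mather's homotopy method for $\tau(f)<\infty\Rightarrow$ finite determinacy, and tangent spaces of orbits in a jet space for the converse. It is correct in substance. Compared with the bare citation, it has the advantage of being carried out directly in $K[[x_1,\dots,x_n]]$ for an arbitrary field $K$ of characteristic zero, which is how the result is used in Section 4. What the citation buys is brevity and a fully checked proof.

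Two points in the first implication should be fixed.

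(1) The ``main obstacle'' you anticipate does not arise in the formal setting. The ideal $(x_1,\dots,x_n)$ lies in the Jacobson radical of $K[t][[x]]$, so Nakayama gives $\mathfrak{m}^{k+1}K[t][[x]]\subseteq (f_t)+\mathfrak{m}\,j(f_t)$ globally in $t$, with no localization, and all coefficients are polynomial in $t$. Your proposed remedy of inverting polynomials in $t$ that do not vanish on $K$ would actually break the formal integration. Antiderivatives of rational functions need not be rational; for example $\int_0^1 dt/(1+t^2)\notin\mathbb{Q}$.

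(2) You need $a\in\mathfrak{m}$, not just $\xi_i\in\mathfrak{m}^2$. Otherwise the constant term of $U_t$ is $\exp\int_0^t a(s,0)\,ds$, which cannot be evaluated at $t=1$ inside $K$. Both conditions follow at once from
$h\in\mathfrak{m}^{N+1}=\mathfrak{m}^{N-k}\mathfrak{m}^{k+1}\subseteq \mathfrak{m}^{N-k}f_t+\mathfrak{m}^{N-k+1}j(f_t)$,
which gives $a\in\mathfrak{m}^{k+1}$ and $\xi_i\in\mathfrak{m}^{k+2}$ for $N=2k+1$. The equations to solve are $\dot\phi_t=\xi(t,\phi_t)$ and $\dot U_t=(a\circ\phi_t)\,U_t$, rather than $\dot U=aU$. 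With these, every coefficient is obtained degree by degree by integrating polynomials in $t$; this is where characteristic zero enters, and evaluation at $t=1$ then makes sense. Your jet-by-jet alternative also works and avoids $t$ altogether.

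In the converse, characteristic zero is what makes the orbit map separable, so that the tangent space of the orbit is the image of the Lie algebra. Orbits are smooth in any characteristic, so this, not smoothness, is the role of $\operatorname{char}K=0$. If $K$ is not algebraically closed, pass to $\overline{K}$ and use that the $K$-points of the affine subspace $j_M(f)+\mathfrak{m}^{k+1}/\mathfrak{m}^{M+1}$ are Zariski dense in it.
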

    
Therefore, any isolated hypersurface singularity is finitely contact determined.

Next, we recall the semicontinuity of the Milnor number and Tjurina number.

\begin{proposition}\label{semi-cont}\textup{(\cite[Proposition 63]{semi-tau})}
    Let $A$ be Noetherian and $F \in S=A[[\mathbf{x}]]$. For $\mathfrak{p} \in \text{Spec}A$ denote by $k(\mathfrak{p})=A_{\mathfrak{p}}/\mathfrak{p}A_{\mathfrak{p}}$ and $F(\mathfrak{p})$ the image of $F$ in $k(\mathfrak{p})[[\mathbf{x}]]$. If $F \in A[\mathbf{x}]$, then $\mu(F(\mathfrak{p}))$ and $\tau(F(\mathfrak{p}))$ are semicontinuous at $\mathfrak{p} \in \text{Spec}A$. That is, $\mathfrak{p}$ has an open neighborhood $U \subset \mathrm{Spec}A$ such that $\mu(F(\mathfrak{p})) \geq \mu(F(\mathfrak{q}))$ and $\tau(F(\mathfrak{p})) \geq \tau(F(\mathfrak{q}))$ for all $\mathfrak{q} \in U$.
\end{proposition}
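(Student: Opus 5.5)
The statement to prove is the semicontinuity of $\mu$ and $\tau$ for a polynomial family $F\in A[\mathbf{x}]$ over a Noetherian base. I would prove it by realizing both numbers as fibre lengths of a finite module over $A$ (after localizing near $\mathfrak p$) and then applying upper semicontinuity of fibre dimension for finitely generated modules.

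\textbf{Setup.} Fix $\mathfrak p$ with $\tau(F(\mathfrak p))<\infty$; the case $\mu$ is identical with $j$ in place of $tj$. If the value is infinite there is nothing to prove. Put $N=\tau(F(\mathfrak p))$. Then $\mathfrak m^{N}\subseteq tj(F(\mathfrak p))$ in $k(\mathfrak p)[[\mathbf x]]$, since a colength-$N$ ideal contains $\mathfrak m^N$.

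\textbf{Step 1: a finite module.} Consider the $A$-module
$$M=A[[\mathbf{x}]]/(tj(F)+\mathfrak{m}^{N+1}),$$
where $tj(F)=(F,\partial_1F,\dots,\partial_nF)$. It is a quotient of the finite free module $A[[\mathbf x]]/\mathfrak m^{N+1}$. Its fibre is $M\otimes k(\mathfrak q)=k(\mathfrak q)[[\mathbf x]]/(tj(F(\mathfrak q))+\mathfrak m^{N+1})$, because forming $tj$ commutes with base change: the partial derivatives of $F(\mathfrak q)$ are the images of those of $F$. Hence $\dim_{k(\mathfrak q)} M\otimes k(\mathfrak q)$ is upper semicontinuous in $\mathfrak q$, by Nakayama: minimal generators at $\mathfrak p$ lift to generators on a neighbourhood. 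So there is an open $U\ni\mathfrak p$ on which this dimension is at most $N$.

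\textbf{Step 2: removing the truncation.} For $\mathfrak q\in U$ I need $\tau(F(\mathfrak q))=\dim M\otimes k(\mathfrak q)$. Let $I_{\mathfrak q}=tj(F(\mathfrak q))+\mathfrak m^{N+1}$, which has colength at most $N$. A colength-$\le N$ ideal contains $\mathfrak m^{N}$: otherwise the chain $I_{\mathfrak q}+\mathfrak m^{k}$ for $k=0,\dots,N+1$ would strictly increase $N+1$ times. Thus $\mathfrak m^N\subseteq I_{\mathfrak q}$, so $\mathfrak m^{N+1}\subseteq \mathfrak m\, I_{\mathfrak q}$, i.e.
$$\mathfrak m^{N+1}\subseteq \mathfrak m\, tj(F(\mathfrak q))+\mathfrak m^{N+2}.$$
By Krull intersection (Nakayama in the complete local ring), $\mathfrak m^{N+1}\subseteq tj(F(\mathfrak q))$. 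Therefore $\tau(F(\mathfrak q))=\dim M\otimes k(\mathfrak q)\le N$, as required.

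\textbf{Expected obstacle.} Step 2 is the subtle point: semicontinuity of truncated lengths is formal, but passing back to the untruncated invariants needs the uniform bound from Step 1 feeding the $\mathfrak m$-adic Nakayama argument. The polynomial hypothesis $F\in A[\mathbf x]$ is presumably used to keep the relevant $A$-modules finite, for example when working with $A[\mathbf x]_{\mathfrak m}$ rather than with power series over non-complete $A$. In the truncated form above it is harmless.
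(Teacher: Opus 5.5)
Your proposal is correct and complete. There is nothing internal to match it against, because the paper does not prove this proposition; it imports it by citation from \cite{semi-tau}.

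Your argument has three steps:
\begin{itemize}
\item truncate at $\mathfrak m^{N+1}$ to obtain the finite $A$-module $M$, whose fibres are $k(\mathfrak q)[[\mathbf x]]/(tj(F(\mathfrak q))+\mathfrak m^{N+1})$ because $tj$ commutes with base change;
\item apply upper semicontinuity of $\mathfrak q\mapsto\dim_{k(\mathfrak q)}M\otimes k(\mathfrak q)$ via Nakayama;
\item remove the truncation by the colength argument.
\end{itemize}
This is a standard self-contained proof. It also gives a little more than the statement: you never use $F\in A[\mathbf x]$, as you noticed, so the argument proves semicontinuity for arbitrary $F\in A[[\mathbf x]]$. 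That is exactly the generality in which Corollary \ref{cor-semicont} is stated, since the corollary silently drops the polynomial hypothesis. In Theorem \ref{main-thm-0} only polynomials are fed in, so nothing in the paper depends on this.

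Two small points of presentation. First, in Step 2 your chain argument relies on the fact that $I+\mathfrak m^k=I+\mathfrak m^{k+1}$ for some $k\le N$ already forces $\mathfrak m^k\subseteq I$, by Nakayama; say this explicitly. You can also shorten the end of Step 2: from $\mathfrak m^N\subseteq tj(F(\mathfrak q))+\mathfrak m\cdot\mathfrak m^N$, Nakayama gives $\mathfrak m^N\subseteq tj(F(\mathfrak q))$ directly, with no need to multiply through by $\mathfrak m$. Second, for the simultaneous statement, take $U$ to be the intersection of the neighbourhoods obtained for $\mu$ and for $\tau$. If one of the two values at $\mathfrak p$ is infinite, use only the neighbourhood for the other.
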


From Proposition \ref{semi-cont}, we directly obtain the following corollary.

\begin{corollary}\label{cor-semicont}
    Let $A$ be a Noetherian domain and $F \in S=A[[\mathbf{x}]]$. If $\mu(F(0))$ and $\tau(F(0))$ are finite, then there exists an open dense subset $U \subset \mathrm{Spec}A$ such that $$\mu(F(0))=\mu(F(\mathfrak{p})),\ \tau(F(0))=\tau(F(\mathfrak{p}))$$ for all $\mathfrak{p} \in U$.
\end{corollary}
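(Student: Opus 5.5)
The statement to prove is Corollary \ref{cor-semicont}: for a Noetherian domain $A$ and $F\in A[[\mathbf{x}]]$ with $\mu(F(0))$ and $\tau(F(0))$ finite, there is a dense open $U\subset\mathrm{Spec}A$ on which both invariants equal their values at the generic point. Here $F(0)$ is the image of $F$ over $k(0)=\mathrm{Frac}(A)$.

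The plan is to get the result from Proposition \ref{semi-cont} in two steps: first produce a neighbourhood of the generic point where the invariants can only drop, then show they cannot drop below their generic values.

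\textbf{Step 1 (neighbourhood of the generic point).} Proposition \ref{semi-cont} is stated for polynomial $F$, while the corollary allows power series, so this must be bridged first. Since $\tau(F(0))<\infty$, Theorem \ref{finite-deter} makes $F(0)$ finitely contact determined. Only finitely many coefficients of a jet matter, and these can be inverted on a basic open set $D(a)$. After replacing $F$ by a high jet $F_k\in A[\mathbf{x}]$, the invariants at every prime of $D(a)$ agree with those of $F$. Alternatively, where the invariants are finite they are determined by a sufficiently high jet, since $\mathfrak{m}^N\subset tj$ and $\mathfrak{m}^N\subset j$ for large $N$. With $F$ now polynomial, Proposition \ref{semi-cont} at $\mathfrak{p}=(0)$ gives an open $U_0\ni(0)$ with
\[
\mu(F(\mathfrak{q}))\le \mu(F(0))\quad\text{and}\quad \tau(F(\mathfrak{q}))\le\tau(F(0))\qquad\text{for all }\mathfrak{q}\in U_0.
\]
Because $A$ is a domain, $\mathrm{Spec}A$ is irreducible, so every nonempty open set, in particular $U_0$, is dense.

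\textbf{Step 2 (reverse inequality).} Fix $\mathfrak{q}\in U_0$; then $\mu(F(\mathfrak{q}))$ is finite. Apply Proposition \ref{semi-cont} at $\mathfrak{q}$. This yields an open neighbourhood $V$ of $\mathfrak{q}$ on which the invariants are at most their values at $\mathfrak{q}$. Every nonempty open set contains the generic point $(0)$, so $(0)\in V$ and
\[
\mu(F(0))\le\mu(F(\mathfrak{q})),\qquad \tau(F(0))\le\tau(F(\mathfrak{q})).
\]
Combined with Step 1 this gives equality on all of $U:=U_0$, which is dense, and the corollary follows.

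The main obstacle is the reduction in Step 1 from power series to polynomials. Finite determinacy controls the contact class of $F(\mathfrak{q})$ only once finiteness at $\mathfrak{q}$ is known, so the truncation degree must be uniform over an open set. I would first try the following: choose $N$ with $\mathfrak{m}^{N}\subset j(F(0))$ over $\mathrm{Frac}(A)$. Clearing denominators expresses the monomials of degree $N$ as combinations of the partials over $A_a$ for a single nonzero $a\in A$, and this inclusion persists at every $\mathfrak{q}\in D(a)$. Jets of order $N+1$ then suffice.
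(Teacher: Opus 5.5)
Your proof is correct and is the argument the paper leaves implicit when it says the corollary follows directly from Proposition \ref{semi-cont}: semicontinuity at $(0)$ gives an open (hence dense) set on which the invariants can only drop, and semicontinuity at each $\mathfrak{q}$ gives the reverse inequality, because every neighbourhood of $\mathfrak{q}$ contains the generic point. Your truncation step is extra care the paper skips, since it only applies the corollary to polynomial $F$; it works if you clear denominators modulo $\mathfrak{m}^{N+1}$ (only finitely many coefficients), then use Nakayama in $A_a[[\mathbf{x}]]$ and intersect $U_0$ with $D(a)$, whereas the contact-determinacy variant should be dropped because $\mu$ is not a contact invariant in positive characteristic.
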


Then we present the main result of this section.

\begin{theorem}\label{main-thm-0}
    Let $f \in R=K[[x_1,\dots,x_n]]$ be an isolated hypersurface singularity. Then we have
    \begin{equation}\label{eq-main-char0}
        \frac{\mu(f)}{\tau(f)} \leq \frac{1}{\mathcal{H}_{\frac{n+1}{2}}(n)-\mathcal{H}_{\frac{n-1}{2}}(n)}.
    \end{equation}
\end{theorem}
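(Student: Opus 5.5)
The plan is reduction modulo $p$, so that Theorem \ref{main-thm-p} can be applied.

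\emph{Step 1: reduce to a polynomial over a finitely generated ring.} Since $\tau(f)<\infty$, Theorem \ref{finite-deter} makes $f$ contact $k$-determined for some $k$. Also $\mu(f)<\infty$ in characteristic zero. After possibly enlarging $k$ (every $g$ with the same $k$-jet then has $\tau(g)<\infty$ and is contact equivalent to $f$), we may replace $f$ by its $k$-jet. Here we need $\mu$ and $\tau$ to be contact invariants; this holds in characteristic zero because $\mu(f)=e(tj(f))$ and $tj(f)$ is a contact invariant. We also need $\mu$ to be unchanged under passing to the $k$-jet. If the jet changes $\mu$, we instead take $k$ large enough that $\mathfrak m^{k}\subset \mathfrak m\, j(f)$; then $j$ is unchanged modulo higher terms, and a Nakayama argument gives that both $\mu$ and $\tau$ are determined by the $k$-jet. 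So we may assume $f$ is a polynomial with coefficients in a finitely generated $\mathbb{Z}$-subalgebra $A\subset K$. Let $F\in A[\mathbf{x}]$ be this polynomial, so that $F$ over the fraction field gives $f$. Since $\mu$ and $\tau$ are unchanged by field extension, we may compute them over $\mathrm{Frac}(A)$.

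\emph{Step 2: choose a good closed point.} Apply Corollary \ref{cor-semicont} to $F$ over the Noetherian domain $A$, with $(0)$ the generic point. This gives a dense open $U\subset\mathrm{Spec}A$ on which $\mu(F(\mathfrak p))=\mu(f)$ and $\tau(F(\mathfrak p))=\tau(f)$. Because $A$ is finitely generated over $\mathbb{Z}$ and contains $\mathbb{Q}$-independent data only finitely, $U$ contains a maximal ideal $\mathfrak p$, whose residue field $k(\mathfrak p)$ is finite of characteristic $p>0$. Pass to the algebraic closure $\overline{k(\mathfrak p)}$; this preserves $\mu$ and $\tau$. The reduction $\bar f=F(\mathfrak p)$ then satisfies $\tau(\bar f)=\tau(f)<\infty$, so it is an isolated singularity. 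It also satisfies $\mu(\bar f)=\mu(f)<\infty$.

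\emph{Step 3: apply the positive characteristic bound.} By definition $\mu(\mathcal O_{\bar f})\le \mu(\bar f)$, but we need the reverse inequality, and this is the main obstacle. To get it we use $\mu(\mathcal O_{\bar f})=e(tj(\bar f))$ together with the semicontinuity of multiplicities. Over a dense open set in characteristic zero we know $e(tj(F(\mathfrak q)))=e(j(F(\mathfrak q)))=\mu$. The plan is to show that $\bar f\in\overline{j(\bar f)}$ for generic $\mathfrak p$. We would spread out an integral dependence equation of $f$ over $j(f)$, which exists in characteristic zero, to coefficients in $A$ after localizing. Reducing it modulo $\mathfrak p$ shows $\bar f\in\overline{j(\bar f)}$. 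Then Proposition \ref{null-form-criterion}(iii) (i.e. $\mu(\mathcal O_f)=\mu(uf)$ iff $uf\in\overline{j(uf)}$, with $u=1$) gives $\mu(\mathcal O_{\bar f})=\mu(\bar f)$. Theorem \ref{main-thm-p} then yields
\[
\frac{\mu(f)}{\tau(f)}=\frac{\mu(\mathcal O_{\bar f})}{\tau(\bar f)}\le \frac{1}{\mathcal{H}_{\frac{n+1}{2}}(n)-\mathcal{H}_{\frac{n-1}{2}}(n)}.
\]
If the spreading-out of the integral dependence equation proves delicate, a fallback is available. In that case we would argue that $e(tj(F(\mathfrak q)))\ge e(j(F(\mathfrak q)))$-type comparisons are constant on a dense open set, using flatness of $A[\mathbf x]/tj(F)$ generically.
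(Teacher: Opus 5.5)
Your Steps 1--2 match the paper's reduction, and you correctly identify the crux. You need $\mu(\mathcal O_{\bar f})=\mu(\bar f)$, which by the cited characterization $\mu(\mathcal O_f)=\mu(uf)\iff uf\in\overline{j(uf)}$ (with $u=1$) amounts to $\bar f\in\overline{j(\bar f)}$. However, the step meant to deliver this does not work as stated. An integral dependence relation $f^m+c_1f^{m-1}+\dots+c_m=0$ with $c_i=\sum_{|\beta|=i}h_{i\beta}(\partial f)^\beta$ lives in a power series ring. The $h_{i\beta}$ have infinitely many coefficients, which need not lie in $A[1/s]$ for any single $s$ (think of $e^x\in\mathbb{Q}[[x]]$). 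So no localization of $A$ carries the relation, and there is nothing to reduce modulo $\mathfrak p$. Your fallback does not fill this gap: $e(tj)\le e(j)$ holds always, and generic constancy of $e(tj(F(\mathfrak q)))$ along $\mathrm{Spec}A$ is exactly what is at stake.

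The gap closes with a truncation argument that uses your Step 2.
\begin{itemize}
\item Over $k_0=\mathrm{Frac}A$ one still has $f\in\overline{j(f)}$. Replace each $h_{i\beta}$ by its truncation $h'_{i\beta}$ in degrees $<\mu m$, where $\mu=\mu(f)$.
\item Then $c'_i=\sum h'_{i\beta}(\partial f)^\beta$ and $E=f^m+\sum_i c'_if^{m-i}=\sum_i(c'_i-c_i)f^{m-i}$ are polynomials with finitely many coefficients, and $E\in\mathfrak m^{\mu m}$. All of this is defined over some $A[1/s]$.
\item Take a closed point $\mathfrak p\in U\cap D(s)$, which exists since $A$ is Jacobson. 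Since $\lambda(R/j(\bar f))=\mu(\bar f)=\mu$, you get $\mathfrak m^{\mu}\subseteq j(\bar f)$, hence $\bar E\in j(\bar f)^m$.
\item Therefore $\bar f^m+\sum_{i<m}\bar c'_i\bar f^{m-i}+(\bar c'_m-\bar E)=0$ is an integral dependence relation over $j(\bar f)$.
\end{itemize}

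With this repair your argument is complete, and it genuinely differs from the paper's. The paper never shows that $\bar f$ itself minimizes $\mu$. Instead it proceeds as follows:
\begin{itemize}
\item It takes a nonzero $G\in\ker\varphi_A$. Its defining relation $G(f,\partial_1 f,\dots,\partial_n f)\in\mathfrak m_AI_A^d$ already holds in $A[[\mathbf{x}]]$, so reduction mod $\mathfrak p$ is automatic.
\item It chooses a linear unit $u=a_0+\sum a_ix_i$ with $G(a_0,-a_1,\dots,-a_n)\neq 0$ and applies semicontinuity to $uF$.
\item It invokes the null-form criterion (Proposition \ref{null-form-criterion}) to get $\mu(\bar u\bar f)=\mu(\mathcal O_{\bar f})$.
\end{itemize}
Your route trades the null-form machinery for part (iii) of the characterization with $u=1$, plus the classical fact $f\in\overline{j(f)}$ in characteristic zero. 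The price is the truncation step, which depends on knowing $\mu(\bar f)<\infty$.
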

\begin{proof}
    By Theorem \ref{finite-deter}, we can assume $f \in K[x_1,\dots,x_n]$ without changing $\mu(f),\tau(f)$. Write
    $$ f=\sum_{a_\alpha \neq 0} a_{\alpha}\mathbf{x}^{\alpha},$$ then $\text{supp}(f)=\{\alpha \mid a_{\alpha}\neq 0\} \subset \mathbb{N}^n$ is a finite set.

    Denote $A=\mathbb{Z}[a_{\alpha}]_{\alpha \in \mathrm{supp}f}$. Then $f \in A[x_1,\dots,x_n]$.
    Write $$R_A=A[[x_1,\dots,x_n]],\  \mathfrak{m}_A=(x_1,\dots,x_n) \subset R_A$$ and 
    $$J_A=(\frac{\partial f}{\partial x_1},\dots, \frac{\partial f}{\partial x_n}) \subset R_A,\ I_A=(f,J_A) \subset R_A.$$ Denote the Milnor number (resp. Tjurina number) of $f$ over $k_0=\mathrm{Frac}A$ by $\mu_0(f)$ (resp. $\tau_0(f)$). Then $K[[x_1,\dots,x_n]]/J \cong K \otimes_{k_0} (k_0[[x_1,\dots,x_n]]/(J_A\otimes_A k_0))$. Thus $\mu_0(f)=\mu(f)<\infty$. Similarly $\tau_0(f)=\tau(f)<\infty$.

    Like Definition \ref{def-null-form} and Remark \ref{rmk-fiber-cone}, we have an $A$-algebra homomorphism
    \begin{equation}\label{fiber-cone-A}
        \varphi_A:A[Y_0,\dots,Y_n] \rightarrow \oplus_{s \geq 0} \frac{I_A^s}{\mathfrak{m}_AI_A^s}.
    \end{equation}
    Note that $$\dim \oplus_{s \geq 0} \frac{I_A^s}{\mathfrak{m}_AI_A^s} \leq \dim \oplus_{s \geq 0}I_A^s \leq \dim A[[x_1,\dots,x_n]]<\dim A[Y_0,\dots,Y_n]$$ (cf. \cite[Proposition 5.1.6]{Integral-closure}), we have $\ker \varphi_A \neq 0$.

    Choosing $G \in \ker \varphi_A$ with $\deg G=d$, we have $G(f,\frac{\partial f}{\partial x_1},\dots,\frac{\partial f}{\partial x_n}) \in \mathfrak{m}_AI_A^d$. Then we choose $a_0,a_1,\dots,a_n$ such that $a_0 \neq 0$ and $G(a_0,-a_1,\dots,-a_n) \neq 0$. Set $g=G(a_0,-a_1,\dots,-a_n)$.

    Consider all closed points $\mathfrak{p} \in \mathrm{Spec}A$. There exists a prime number $p \in \mathbb{Z}$ such that $\mathfrak{p} \cap\mathbb{Z}=(p)$. Therefore, $k(\mathfrak{p})=A_{\mathfrak{p}}/\mathfrak{p}A_{\mathfrak{p}}$ is a field of characteristic $p>0$. All closed points form a dense subset of $\mathrm{Spec}A$. For $a \in A,$ we denote by $\overline{a}$ the image of $a$ in $k(\mathfrak{p})$.

    Therefore, we can take closed point $\mathfrak{p} \in \mathrm{Spec}A$ simultaneously satisfying the following conditions:\\
    (i) $\overline{a_0} \neq 0$ and $\overline{g}=\overline{G(a_0,-a_1,\dots,-a_n)}=\overline{G}(\overline{a_0},-\overline{a_1},\dots,-\overline{a_n}) \neq 0$,\\
    (ii) $\mu_0(f)=\mu_0((a_0+a_1x_1+\dots+a_nx_n)f)=\mu((\overline{a_0}+\overline{a_1}x_1+\dots+\overline{a_n}x_n)f(\mathfrak{p}))$,\\
    (iii) $\tau_0(f)=\tau(f(\mathfrak{p}))$,\\
    because the $\mathfrak{p}$ satisfying (i), (ii), (iii) respectively form a dense subset by Corollary \ref{cor-semicont}.

    Modulo $\mathfrak{p}$ in (\ref{fiber-cone-A}) and denote by $\mathfrak{a}(\mathfrak{p})$ the image of ideal $\mathfrak{a} \subset A[[\mathbf{x}]]$ in $k(\mathfrak{p})[[\mathbf{x}]]$, we get a commutative diagram
\begin{center}
    \begin{tikzcd}
0 \arrow[r] \arrow[d] & \ker \varphi \arrow[r] \arrow[d] & A[Y_0,\dots,Y_n] \arrow[r, "\varphi"] \arrow[d]     & \oplus_{s \geq 0} \frac{I_A^s}{\mathfrak{m}_AI_A^s} \arrow[d] \\
0 \arrow[r]           & (\ker \varphi)(\mathfrak{p}) \arrow[r]           & k(\mathfrak{p})[Y_0,\dots,Y_n] \arrow[r, "\varphi(\mathfrak{p})"] & \oplus_{s \geq 0} \frac{I_A(\mathfrak{p})^s}{\mathfrak{m}_A(\mathfrak{p})I_A(\mathfrak{p})^s}          
\end{tikzcd}
\end{center}
and thus $\ker \varphi(\mathfrak{p}) \cong (\ker \varphi)(\mathfrak{p})$. Therefore, $\overline{G}$ is a null form of $I_A(\mathfrak{p})$.

By Proposition \ref{null-form-criterion}, $\mu((\overline{a_0}+\overline{a_1}x_1+\dots+\overline{a_n}x_n)f(\mathfrak{p}))=\mu(\mathcal{O}_{f(\mathfrak{p})})$. Finally we get
$$\frac{\mu(f)}{\tau(f)}=\frac{\mu_0(f)}{\tau_0(f)}=\frac{\mu(\mathcal{O}_{f(\mathfrak{p})})}{\tau(f(\mathfrak{p}))} \leq \frac{1}{\mathcal{H}_{\frac{n+1}{2}}(n)-\mathcal{H}_{\frac{n-1}{2}}(n)}$$ by Theorem \ref{main-thm-p}.
\end{proof}

\begin{remark}
    (i) For $n \geq 2$, we have $\frac{\mu(f)}{\tau(f)} \leq \frac{1}{\mathcal{H}_{\frac{n+1}{2}}(n)-\mathcal{H}_{\frac{n-1}{2}}(n)}<n$ by Proposition \ref{bound-less-n}, which is better than the bound given by Y. Liu \cite{liuyongqiang}.

    (ii) If Conjecture \ref{conj-j-closure} holds for $n \geq 3$ in positive characteristic, then the equality in (\ref{eq-main-char0}) cannot hold.
\end{remark}

Putting $n=2$, we get $\frac{\mu(f)}{\tau(f)}\leq \frac{4}{3}$ for an isolated plane curve singularity $f$ over a characteristic zero field. Applying the same method as Proposition \ref{conj-j-closure-p-n=2}, we can show
\begin{corollary}
    Conjecture \ref{conj-j-closure} holds for $n=2$.
\end{corollary}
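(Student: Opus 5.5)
We argue by contradiction, transporting the positive-characteristic argument of Proposition \ref{conj-j-closure-p-n=2} to characteristic zero. Let $f\in K[[x,y]]$ with $\operatorname{char}K=0$, so $\mu(f)=\mu(\mathcal{O}_f)=e(tj(f))$. Since $\mu$ and $\tau$ are finite here, it suffices to treat $f$ with $f\in\overline{j(f)}$. Indeed, by the null-form criterion (or simply because $\mu(f)=e(tj(f))=e(j(f))$ in characteristic zero) we have $\overline{tj(f)}=\overline{j(f)}$, which gives this hypothesis automatically. Suppose $j(f):f\subseteq\overline{j(f)}$.

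\textbf{Step 1.} We show $(j(f):f)=\overline{j(f)}$. By Theorem \ref{skoda-thm} with two generators and $k=1$, we have $\overline{j(f)^2}\subseteq j(f)$, and hence $\overline{j(f)}\cdot f\subseteq\overline{j(f)}^2\subseteq\overline{j(f)^2}\subseteq j(f)$. This gives $\overline{j(f)}\subseteq j(f):f$, and equality follows from the assumption.

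\textbf{Step 2.} We compute the colength of the colon ideal. The exact sequence
$$0\to (j(f):f)/j(f)\to R/j(f)\xrightarrow{\cdot f}R/j(f)\to R/tj(f)\to 0$$
yields $\lambda(R/(j(f):f))=\mu(f)-\tau(f)$. Also $e(j(f):f)=e(\overline{j(f)})=e(j(f))=\mu(f)$.

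\textbf{Step 3.} This is the key point, and where characteristic zero needs a substitute. In Proposition \ref{conj-j-closure-p-n=2} we used $e(I)\le d!\,e_{HK}(I)=2\lambda(R/I)$. In characteristic zero we instead use the classical inequality $e(I)\le d!\,\lambda(R/I)$ for $\mathfrak{m}$-primary ideals in a regular local ring of dimension $d$. This is Lech's inequality $e(I)\le d!\,e(\mathfrak{m})\lambda(R/I)$, which is valid in any characteristic. With $d=2$ it gives $\mu(f)\le 2(\mu(f)-\tau(f))$, that is, $\mu/\tau\ge 2$.

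If we prefer to stay within the paper's tools, we can instead reduce modulo $p$ as in the proof of Theorem \ref{main-thm-0}. Choose a closed point $\mathfrak{p}$ preserving $\mu$, $\tau$ and the colength $\lambda(R/(j(f):f))$, the last being the difference of the first two. Then apply Proposition \ref{eHK-property}(iii) to $j(f(\mathfrak{p})):f(\mathfrak{p})$. The snag is that the integral-closure containment need not descend, but we only need the multiplicity inequality $e(j:f)\ge e(\overline{j})$ type bound in the right direction. Since $j(f)\subseteq j(f):f$, we already have $e(j:f)\le e(j)$; what we actually need is $e(j:f)=\mu$. So Lech's inequality is the cleaner route, and I would use it.

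\textbf{Step 4.} Combining $\mu/\tau\ge2$ with $\mu(f)/\tau(f)\le 4/3$ from Theorem \ref{main-thm-0} at $n=2$ gives a contradiction. Hence $j(f):f\nsubseteq\overline{j(f)}$.

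The main obstacle is Step 3, namely justifying $e(I)\le 2\lambda(R/I)$ in characteristic zero. Lech's inequality handles this directly. If a reference confined to the paper's methods is required, I would derive it by reduction mod $p$: pass to a prime $\mathfrak{p}$ where both $e$ and $\lambda$ of $j(f):f$ are preserved, using semicontinuity and the fact that $\lambda(R/(j(f):f))=\mu-\tau$ and $e(j(f):f)=\mu$ are computed from $\mu$ and $\tau$. Then invoke Proposition \ref{eHK-property}(ii),(iii) there.
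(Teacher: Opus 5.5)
Your proof is correct and follows the paper's route: Brian\c{c}on--Skoda forces $j(f):f=\overline{j(f)}$, the exact sequence gives colength $\mu-\tau$ against multiplicity $\mu$, and a multiplicity-versus-colength bound gives $\mu/\tau\ge 2$, contradicting $\mu/\tau\le 4/3$. Here Lech's inequality $e(I)\le 2\lambda(R/I)$ supplies in characteristic zero what Proposition \ref{eHK-property}(ii),(iii) supplies in characteristic $p$, an ingredient the paper's phrase ``the same method'' leaves implicit; your mod-$p$ fallback is not airtight as written (the equality $e(j(f):f)=\mu$ comes from the hypothesis $j(f):f\subseteq\overline{j(f)}$, not from $\mu$ and $\tau$ alone, and that containment need not descend), but you rightly set it aside in favor of Lech.
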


Putting $n=3$ and $K=\mathbb{C}$, we have
\begin{corollary}\label{cor-C-3/2}
    $\frac{\mu}{\tau}\leq\frac{3}{2}$ for any isolated hypersurface singularity $f \in \mathbb{C}[[x,y,z]]$.
\end{corollary}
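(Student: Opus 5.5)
This is the specialization $n=3$, $K=\mathbb{C}$ of Theorem \ref{main-thm-0}. The plan is to invoke that theorem and then evaluate the constant $\mathcal{H}_{2}(3)-\mathcal{H}_{1}(3)$ explicitly.

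First, $\mathbb{C}$ is a field of characteristic zero. An isolated hypersurface singularity $f\in\mathbb{C}[[x,y,z]]$ therefore satisfies the hypotheses of Theorem \ref{main-thm-0} with $n=3$. That theorem gives
$$\frac{\mu(f)}{\tau(f)}\leq \frac{1}{\mathcal{H}_{2}(3)-\mathcal{H}_{1}(3)},$$
since $\frac{n+1}{2}=2$ and $\frac{n-1}{2}=1$.

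Next, I compute the two values of $\mathcal{H}_s(3)$ that appear.

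\begin{itemize}
\item By Proposition \ref{H-property}(iv), $\mathcal{H}_1(3)=\frac{1}{3!}=\frac16$.
\item By Proposition \ref{H-property}(v) with $s=1$, $\mathcal{H}_2(3)=1-\mathcal{H}_1(3)=\frac56$.
\item As a check against the defining formula: $\mathcal{H}_2(3)=\frac{1}{6}\bigl(2^3-3\cdot 1^3+3\cdot 0^3\bigr)=\frac{5}{6}$.
\end{itemize}

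Hence $\mathcal{H}_2(3)-\mathcal{H}_1(3)=\frac{2}{3}$, and the bound becomes $\frac{\mu}{\tau}\leq\frac32$.

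There is no real obstacle. The only things to confirm are that $\mathbb{C}$ meets the characteristic-zero hypothesis of Theorem \ref{main-thm-0}, which it does, and that the arithmetic for the constant is correct.
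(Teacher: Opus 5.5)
Your proposal is correct and is the paper's own argument: the corollary follows by setting $n=3$ and $K=\mathbb{C}$ in Theorem~\ref{main-thm-0}. Your computation $\mathcal{H}_2(3)-\mathcal{H}_1(3)=\frac{5}{6}-\frac{1}{6}=\frac{2}{3}$, which the paper does not write out, is also right.
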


The following Wahl's theorem gives the relationship among $\mu$, $\tau$, and the geometric genus $p_g$.

\begin{theorem}\label{Wahl-thm}\textup{(\cite[Corollary 2.9]{Wahl})}
    Let $X$ be an isolated complete intersection singularity of $\dim 2$ over $\mathbb{C}$. Then 
    $$\mu-\tau \leq 2p_g-\dim H^1(A;\mathbb{C}),$$ where $A$ is the exceptional divisor of a minimal good resolution of $X$.
\end{theorem}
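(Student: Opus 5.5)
The statement is quoted from \cite[Corollary 2.9]{Wahl}, so the plan is to reconstruct Wahl's argument. The idea is to express both $\mu$ and $\tau$ through sheaf cohomology on a minimal good resolution $\pi:\tilde X\to X$ with exceptional divisor $A$, and then compare them. The two inputs are Laufer's formula for $\mu$ and a cohomological description of $T^1_X$. I have not checked the numerical bookkeeping in the third step, and that step carries the whole inequality.

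First, the Milnor number. For a two-dimensional ICIS the Milnor fibre is a smoothing. Laufer's formula (extended by Looijenga and Steenbrink to ICIS) then gives
$$\mu = 12p_g + K_{\tilde X}^2 + \chi_{\mathrm{top}}(A) - 1,$$
where $\chi_{\mathrm{top}}(A)=1-b_1(A)+b_2(A)$ and $b_1(A)=\dim H^1(A;\mathbb{C})$. So $\mu$ is expressed through $p_g$, $K^2$, $b_1(A)$ and $b_2(A)$. Since $A$ is a tree-like configuration of curves in a good resolution, $b_1(A)=2\sum_i g(A_i)+(\text{number of cycles in the dual graph})$.

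Second, the Tjurina number. For an ICIS of dimension $2$ the depth is at least $2$. Hence $T^1_X\cong H^1(U,\Theta_U)$, where $U=X\setminus\{0\}=\tilde X\setminus A$, and $\tau=\dim T^1_X$. I would apply the local cohomology sequence
$$H^1_A(\tilde X,\Theta_{\tilde X})\to H^1(\tilde X,\Theta_{\tilde X})\to H^1(U,\Theta_U)\to H^2_A(\tilde X,\Theta_{\tilde X})\to H^2(\tilde X,\Theta_{\tilde X}).$$
The last map vanishes on the relevant piece because $\tilde X$ is a neighbourhood of a curve. Using Wahl's vanishing $H^1_A(\Theta_{\tilde X})=0$ for a minimal good resolution, this gives
$$\tau = h^1(\Theta_{\tilde X}) + \dim H^2_A(\Theta_{\tilde X}).$$
Serre duality turns $H^2_A(\Theta_{\tilde X})$ into the dual of $H^0(\tilde X,\Omega^1_{\tilde X}(K_{\tilde X}))$ modulo sections with support conditions. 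This quantity is computable via Riemann--Roch on the cycles supported on $A$.

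Third, I would turn the desired inequality into the lower bound
$$\tau \ge 10p_g + K^2 + \chi_{\mathrm{top}}(A) - 1 + b_1(A).$$
To get it, I would compute $\chi(\Theta_{\tilde X})$ relative to a large cycle $Z$ on $A$ by Riemann--Roch: $\chi = 2\chi(\mathcal{O}) + \tfrac{1}{2}(c_1^2-\ldots)$, with $c_1^2=K^2$ and $c_2$ related to $\chi_{\mathrm{top}}(A)$. I would then use $h^1(\mathcal{O}_{\tilde X})=p_g$ to absorb the $p_g$ terms; this is the source of the coefficient $2$ in front of $p_g$. The term $b_1(A)$ should come from $H^1(A,\mathcal{O}_A)$ and the weight filtration, namely the part of $H^1(\Theta_{\tilde X})$ coming from the curve genera and cycles of the dual graph.

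\emph{Main obstacle.} The hard step is proving that $h^2$-type correction terms are nonnegative. Concretely, one needs a lower bound for $h^1(\Theta_{\tilde X})$ in terms of Euler characteristics, that is, control of $h^2_A$ and $H^0(\Omega^1(K))$. Only in that direction does the argument yield an inequality rather than an equality. I would first try Wahl's vanishing theorems for $H^1_A(\Theta_{\tilde X})$ on minimal good resolutions. If those do not suffice, I would fall back on the Gorenstein duality $\Omega^2_{\tilde X}\cong\mathcal{O}(K_{\tilde X})$, with $K_{\tilde X}$ anti-effective up to a cycle whose $\chi$ is $p_g$.
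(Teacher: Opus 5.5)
The paper gives no proof of this statement; it is quoted from Wahl. So your outline has to stand on its own, and there is a genuine gap: it breaks at Step 2.

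For a two-dimensional ICIS, $\operatorname{depth}\mathcal{O}_{X,0}=2$. Schlessinger's isomorphism $T^1_X\cong H^1(U,\Theta_U)$ requires depth at least $3$; depth $2$ only gives an injection $T^1_X\hookrightarrow H^1(U,\Theta_U)$. In fact $H^1(U,\Theta_U)\cong H^2_{\{0\}}(X,\Theta_X)$ is the top local cohomology of a reflexive, hence depth $2$, module on a $2$-dimensional germ, so it is infinite-dimensional; already $H^1(\mathbb{C}^2\setminus\{0\},\mathcal{O})$ is. In your sequence $H^1(\tilde X,\Theta_{\tilde X})$ is finite-dimensional and $H^2(\tilde X,\Theta_{\tilde X})=0$ outright. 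Hence $H^2_A(\tilde X,\Theta_{\tilde X})$ is infinite-dimensional too, and the right-hand side of $\tau=h^1(\Theta_{\tilde X})+\dim H^2_A(\Theta_{\tilde X})$ is infinite. Any deformation-theoretic approach has to identify the finite-dimensional subspace $T^1_X\subset H^1(U,\Theta_U)$, i.e.\ which deformations of $U$ extend across the singular point. Nothing in the outline does this.

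Consequently Step 3 has no input, and even as a plan it is not an argument. Since $\chi_{\mathrm{top}}(A)-1+b_1(A)=b_2(A)$, your target is $\tau\ge 10p_g+K^2+b_2(A)$. You only assert that an unspecified Riemann--Roch computation on a non-compact surface will produce the terms $2p_g$ and $b_1(A)$ with the right signs. That cannot be waved through, because the inequality is sharp:
\begin{itemize}
\item for simple elliptic singularities, $\mu=\tau$, $p_g=1$ and $b_1(A)=2$, so both sides are $0$;
\item for cusps such as $T_{2,3,7}$, $\mu-\tau=1$, $p_g=1$ and $b_1(A)=1$, so both sides are $1$.
\end{itemize}

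If you want a finite-dimensional handle, at least in the hypersurface case the paper actually uses, differential forms give one directly. Let $J$ be the Jacobian ideal.
\begin{itemize}
\item The image of $\Omega^2_X$ in $\omega_X$ is $J\omega_X$, so $\tau=\dim\omega_X/J\omega_X$.
\item The image of $d\Omega^1_X$ in $\omega_X$ satisfies $\omega_X/d\Omega^1_X\cong\Omega^3/(df\wedge d\Omega^1+f\Omega^3)=H''/fH''$. Here $H''$ is the Brieskorn lattice, free of rank $\mu$, so $\mu=\dim\omega_X/d\Omega^1_X$.
\item $p_g=\dim\omega_X/\pi_*\omega_{\tilde X}$.
\end{itemize}
Hence $\mu-\tau=\dim J\omega_X/d\Omega^1_X$, with both modules inside $\pi_*\omega_{\tilde X}$. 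The statement then becomes a bound of this quotient by $2p_g-b_1(A)$, to be proved by comparing with forms on the resolution. That comparison is where $p_g$ and $H^1(A;\mathbb{C})$ naturally enter.
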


Then we can see that Corollary \ref{cor-C-3/2} is in fact a weak version of Durfee's conjecture.

\begin{proposition}\label{durfee-3/2}\textup{(\cite[Proposition 3]{quotient-4/3})}
    Durfee's Conjecture \ref{conj-durfee} implies Corollary \ref{cor-C-3/2}.
\end{proposition}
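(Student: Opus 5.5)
Assume Durfee's Conjecture \ref{conj-durfee}, $6p_g \le \mu$ (strict unless $\mu=0$), for an isolated hypersurface singularity $f\in\mathbb{C}[[x,y,z]]$. The plan is to combine it with Wahl's inequality, Theorem \ref{Wahl-thm}, and eliminate $p_g$.

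An isolated hypersurface singularity in $\mathbb{C}^3$ is an isolated complete intersection singularity of dimension $2$. Theorem \ref{Wahl-thm} therefore gives
$$\mu-\tau\le 2p_g-\dim H^1(A;\mathbb{C})\le 2p_g,$$
because $\dim H^1(A;\mathbb{C})\ge 0$. Durfee's inequality $p_g\le \mu/6$ then yields $\mu-\tau\le \mu/3$, that is, $\tfrac{2}{3}\mu\le\tau$. Rearranging gives $\mu/\tau\le 3/2$, which is Corollary \ref{cor-C-3/2}. We may divide by $\tau$ since $\tau\ge 1$ for a singular point; if $f$ is smooth at $0$, then $\mu=\tau=0$ and there is nothing to prove.

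The strict form in Conjecture \ref{conj-durfee} actually gives more. If $\mu>0$, then $6p_g<\mu$, so $\mu-\tau<\mu/3$ and $\mu/\tau<3/2$, which is Almirón's strict Conjecture \ref{conj-3/2}. Either way Corollary \ref{cor-C-3/2} follows.

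There is no real obstacle. The only points to check are that the hypotheses of Theorem \ref{Wahl-thm} hold for hypersurfaces, and that the term $\dim H^1(A;\mathbb{C})$ can be dropped because it only helps the inequality. The degenerate case $\tau=0$ is excluded as above.
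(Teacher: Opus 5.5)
Your proof is correct and follows the paper's argument: apply Wahl's inequality, drop the nonnegative term $\dim H^1(A;\mathbb{C})$, and substitute $p_g\le \mu/6$ to get $\mu-\tau\le \mu/3$, hence $\mu/\tau\le 3/2$. Your extra remarks, that $\tau\ge 1$ so the division is legitimate and that the strict form of Durfee's conjecture yields the strict Conjecture \ref{conj-3/2}, are valid refinements that the paper does not spell out.
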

\begin{proof}
    If Conjecture \ref{conj-durfee} is true, then $6p_g \leq \mu$. By Theorem \ref{Wahl-thm}, $\mu-\tau \leq 2p_g \leq \frac{\mu}{3}$. Then $\frac{\mu}{\tau} \leq \frac{3}{2}$.
\end{proof}
Therefore, the validity of Corollary \ref{cor-C-3/2} makes Conjecture \ref{conj-durfee} more likely to hold. 

\section{The minimal Tjurina number of the equisingularity class of $x_1^n+\dots+x_d^n$}

In \cite{greuel-conj}, A. Dimca and G.M. Greuel constructed a sequence of hypersurface singularities in $K[[x,y]]$ whose initial forms are $x^n+y^n$ and for which $\frac{\mu}{\tau}$ tends to $\frac{4}{3}$. In \cite{Wahl}, Wahl showed that there exists a sequence in $K[[x,y,z]]$ with initial forms $x^n+y^n+z^n$ such that $\frac{\mu}{\tau} \rightarrow \frac{3}{2}$ when $n \rightarrow \infty$. In this part, we will calculate $\frac{\mu}{\tau}$ of hypersurface singularities in $K[[x_1,\dots,x_d]]$ with initial forms $x_1^n+\dots+x_d^n$ following the process in \cite{Zariski-moduli} p.114-127, omit  lengthy calculations and proofs, and only present the main results.

In the following, we denote by $\mathfrak{m}=(x_1,\dots, x_d)$ the maximal ideal of $K[[x_1,\dots,x_d]]$. We denote by $\#A$ the number of elements of the set $A$. We say $a\sim_k b$ for $k>0$ if $\frac{a-b}{n^k} \rightarrow 0$ when $n \rightarrow \infty$. 

First we give two lemmas.

\begin{lemma}\label{lemma-number}
    For $n \in \mathbb{N}$, $\#\{a_1+\dots+a_d=n \mid \forall a_i \in \mathbb{N}\}=\binom{n+d-1}{d-1}$. In particular, there are $\binom{n+d-1}{d-1}$ monomials of degree $n$ in $\mathfrak{m}^n$. Moreover, 
    \begin{equation}
        \begin{aligned}
            \#\{a_1+\dots+a_d=n \mid \forall a_i \in \mathbb{N}\} &\sim_{d-1} \frac{n^{d-1}}{(d-1)!},\\
            \#\{a_1+\dots+a_d \leq n \mid \forall a_i \in \mathbb{N}\} &\sim_{d} \frac{n^{d}}{d!}.
        \end{aligned}
    \end{equation}
\end{lemma}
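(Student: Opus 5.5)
The first claim is the standard stars-and-bars count. I will encode a solution $(a_1,\dots,a_d)\in\mathbb{N}^d$ of $a_1+\dots+a_d=n$ as a row of $n$ stars and $d-1$ bars, where $a_i$ is the number of stars between the $(i-1)$-st and $i$-th bars. This is a bijection onto the choices of positions of the $d-1$ bars among $n+d-1$ slots, which gives $\binom{n+d-1}{d-1}$. The monomials of degree exactly $n$ in $K[[x_1,\dots,x_d]]$ are the $\mathbf{x}^{\alpha}$ with $|\alpha|=n$; these lie in $\mathfrak{m}^n$, and their images form a $K$-basis of $\mathfrak{m}^n/\mathfrak{m}^{n+1}$. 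So the statement about monomials is the same count.

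For the first asymptotic, I will write
$$\binom{n+d-1}{d-1}=\frac{(n+1)(n+2)\cdots(n+d-1)}{(d-1)!}.$$
This is a polynomial in $n$ of degree $d-1$ with leading coefficient $\frac{1}{(d-1)!}$. Subtracting $\frac{n^{d-1}}{(d-1)!}$ leaves a polynomial of degree at most $d-2$. Dividing it by $n^{d-1}$ therefore gives something that tends to $0$, which is exactly $\sim_{d-1}$.

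For the second asymptotic, I will sum the first count over $m=0,\dots,n$. The hockey-stick identity $\sum_{m=0}^n\binom{m+d-1}{d-1}=\binom{n+d}{d}$ gives the value directly; equivalently, adding a slack variable $a_{d+1}$ turns $\sum_{i\le d} a_i\le n$ into an equation in $d+1$ variables. Then
$$\binom{n+d}{d}=\frac{(n+1)\cdots(n+d)}{d!},$$
which is a polynomial of degree $d$ with leading term $\frac{n^d}{d!}$. The same argument as before shows the difference is $O(n^{d-1})=o(n^d)$.

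There is no real obstacle here. The only point needing care is the convention that $\mathbb{N}$ includes $0$, which the counts above use. One should also note that $\sim_k$ is only asserted with respect to $n\to\infty$ for fixed $d$, so the polynomial-degree argument suffices.
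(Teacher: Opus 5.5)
Your proof is correct and is the standard elementary argument the paper has in mind; the paper only says ``The proof is elementary'' and gives no details. Stars-and-bars for the exact count, the expansion $\binom{n+d-1}{d-1}=\frac{(n+1)\cdots(n+d-1)}{(d-1)!}$ for the first asymptotic, and the hockey-stick identity giving $\binom{n+d}{d}$ for the second fill in exactly those omitted details.
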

\begin{proof}
    The proof is elementary.
\end{proof}

\begin{lemma}\label{lemma-number<n}
    For $\lambda>0$, we have
    \begin{equation}\label{set-H'}
        \begin{aligned}
            \#\{a_1+\dots+a_d \leq \lambda n \mid \forall a_i \in \mathbb{N},a_i \leq n\} &\sim_{d} n^d \cdot \mathcal{H}_\lambda(d),\\
            \#\{a_1+\dots+a_d = \lambda n \mid \forall a_i \in \mathbb{N},a_i \leq n\} &\sim_{d-1} n^{d-1} \cdot \mathcal{H}'_\lambda(d).
        \end{aligned}
    \end{equation}
\end{lemma}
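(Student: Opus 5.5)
The plan is to read both counts as lattice-point counts in dilated polytopes and compare them with volumes. Substitute $y_i=a_i/n$. The first set becomes the set of points of the lattice $\frac1n\mathbb{Z}^d$ lying in
$$P_\lambda=\{y\in[0,1]^d \mid y_1+\dots+y_d\le\lambda\}.$$
By Remark \ref{H-measure}, $P_\lambda$ has volume $\mathcal{H}_\lambda(d)$.

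For the first asymptotic I will use the standard cube-counting argument. Attach to each lattice point $a$ the unit cube $a+[0,1)^d$.
\begin{itemize}
\item Cubes attached to points of $nP_\lambda$ all lie in $nP_{\lambda+d/n}$ enlarged to the box $[0,n+1]^d$.
\item Every cube contained in $nP_{\lambda}$ has its corner in the set being counted.
\end{itemize}
This sandwiches the count between $\operatorname{vol}(nP_{\lambda-d/n})$ and $\operatorname{vol}(n\tilde P_{\lambda+d/n})$, where $\tilde P$ uses the slightly enlarged box $[0,1+1/n]^d$. The volume of $n\tilde P_{\lambda+d/n}$ differs from $n^d\mathcal{H}_{\lambda+d/n}(d)$ by $O(n^{d-1})$. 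By Proposition \ref{H-property}(ii), Lipschitz continuity gives $|\mathcal{H}_{\lambda\pm d/n}(d)-\mathcal{H}_\lambda(d)|=O(1/n)$. Both bounds are therefore $n^d\mathcal{H}_\lambda(d)+O(n^{d-1})$, which yields $\sim_d$.

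For the second count, the number of solutions of $a_1+\dots+a_d=m$ with $0\le a_i\le n$ is the difference of the first count at levels $m$ and $m-1$. Differencing the $\sim_d$ asymptotic loses an order of $n$ and does not directly give the $\sim_{d-1}$ statement, so I would argue by induction on $d$ instead. Fix $a_d=k\in\{0,\dots,n\}$. Then
$$\#\{a_1+\dots+a_d=\lambda n,\ 0\le a_i\le n\}=\sum_{k=0}^{n}\#\{a_1+\dots+a_{d-1}\le \lambda n-k\}-\#\{a_1+\dots+a_{d-1}\le\lambda n-k-1\},$$
with all $a_i\le n$. A cleaner route is to note that the slice count is the number of lattice points of the $(d-1)$-dimensional polytope $\{a\in[0,n]^{d-1}\mid \lambda n-n\le a_1+\dots+a_{d-1}\le\lambda n\}$. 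This holds because $a_d=\lambda n-\sum_{i<d}a_i$ is determined and must lie in $[0,n]$; here $\lambda n$ is taken to be an integer, or else the count is taken with $\lfloor\lambda n\rfloor$.

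The first part, applied in dimension $d-1$ to both levels $\lambda$ and $\lambda-1$, shows this count is
$$n^{d-1}\bigl(\mathcal{H}_\lambda(d-1)-\mathcal{H}_{\lambda-1}(d-1)\bigr)+O(n^{d-2}).$$
The lower level needs care at the boundary, but a single hyperplane layer contributes only $O(n^{d-2})$. By Corollary \ref{cor-H'}, the bracket equals $\mathcal{H}'_\lambda(d)$, which proves the second asymptotic.

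The main obstacle is making the $O(n^{d-1})$ (respectively $O(n^{d-2})$) boundary error uniform. This requires bounding the number of lattice points within distance $\sqrt d$ of the boundary of $nP_\lambda$ by $C n^{d-1}$. I will handle it with the fattening argument above, using the Lipschitz continuity of $\mathcal{H}_s$ in $s$ for the slanted facet together with the trivial count $O(n^{d-1})$ for the facets $a_i=0$ and $a_i=n$. When $d=1$ the second statement is read as a count equal to $1$ or $0$, and $\mathcal{H}'_\lambda(1)$ is the indicator of $[0,1]$, consistent away from the endpoints.
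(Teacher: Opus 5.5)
Your proposal is correct, and for the second formula it takes a genuinely different route from the paper.

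\textbf{First formula.} The paper uses the same rescaling idea you do, stated only qualitatively: the points $(a_1/n,\dots,a_d/n)$ become dense in $\{y\in[0,1]^d\mid \sum y_i\le\lambda\}$, whose volume is $\mathcal{H}_\lambda(d)$ by Remark \ref{H-measure}. Your cube sandwich is the quantitative version of this argument and gives an explicit $O(n^{d-1})$ error.

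\textbf{Second formula.} Here the paper only says the level count ``follows by differentiating with respect to $\lambda$.'' As you observe, that step is not justified as written. A $\sim_d$ statement for the cumulative count controls the difference of two consecutive levels only up to $o(n^d)$. Even an $O(n^{d-1})$ error would control it only up to $O(n^{d-1})$, and you need $o(n^{d-1})$. So differentiating an asymptotic equivalence needs extra input, for instance slow variation of the level counts combined with averaging over windows of width $\varepsilon n$.

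Your slicing argument avoids this entirely and gives an honest proof. You eliminate $a_d$ to get the lattice points of $\{b\in[0,n]^{d-1}\mid \lambda n-n\le\sum b_i\le\lambda n\}$. You then apply the first formula in dimension $d-1$ at levels $\lambda$ and $\lambda-1$. Finally you identify $\mathcal{H}_\lambda(d-1)-\mathcal{H}_{\lambda-1}(d-1)=\mathcal{H}'_\lambda(d)$ via Corollary \ref{cor-H'}. The result has error $O(n^{d-2})$. In fact the slicing only needs the qualitative first formula in dimension $d-1$. The shift by $-1$ can then be absorbed using monotonicity of the count in the level and continuity of $\mathcal{H}_s(d-1)$.

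\textbf{Trade-off.} The paper's version is shorter and makes it transparent why $\mathcal{H}'$ appears. Yours is the one that actually proves the $\sim_{d-1}$ claim.

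\textbf{Minor remarks.} Your lower bound can be taken to be $\operatorname{vol}(nP_\lambda)=n^d\mathcal{H}_\lambda(d)$ itself, since $y\mapsto\lfloor y\rfloor$ maps $nP_\lambda$ into the counted set. For $\lambda\le 1$, the lower level is handled by the implicit convention $\mathcal{H}_s(d-1)=0$ for $s\le 0$, which Proposition \ref{H-property}(vi) already presupposes.
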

\begin{proof}
    For every $(a_1,\dots,a_d) \in \{a_1+\dots+a_d \leq \lambda n \mid \forall a_i \in \mathbb{N},a_i \leq n\},$ $(\frac{a_1}{n},\dots,\frac{a_d}{n}) \in X_s=\{(x_1,\dots,x_d) \in [0,1]^d \mid x_1+\dots+x_d \leq s\}$, whose measure is equal to $\mathcal{H}_\lambda(d)$ by Remark \ref{H-measure}. When $n \rightarrow \infty$, the image of $(\frac{a_1}{n},\dots,\frac{a_d}{n})$ is dense in $X_s$. Hence $\#\{a_1+\dots+a_d \leq \lambda n \mid \forall a_i \in \mathbb{N},a_i \leq n\} \sim_{d} n^d \cdot \mathcal{H}_\lambda(d)$. The second formula in (\ref{set-H'}) follows by differentiating with respect to $\lambda$. 
\end{proof}

Then we can present the main result of this section.

\begin{theorem}
    Let $K$ be a field of characteristic $0$. Denote by $\tau_{min}$ the minimal Tjurina number of positive weight deformations of $x_1^n+\dots+x_d^n$. Then $$\tau_{min}\sim_d (\mathcal{H}_{\frac{d+1}{2}}(d)-\mathcal{H}_{\frac{d-1}{2}}(d))\cdot n^d.$$ Therefore, for every $d\geq 2$, there exists a sequence in $K[[x_1,\dots,x_d]]$ consisting of deformations of $x_1^n+\dots+x_d^n$ such that $$\frac{\mu}{\tau} \rightarrow \frac{1}{\mathcal{H}_{\frac{d+1}{2}}(d)-\mathcal{H}_{\frac{d-1}{2}}(d)} \mathrm{\ \ when\ n \rightarrow \infty}.$$ As a result, the bounds in Theorem \ref{main-thm-0} and Theorem \ref{main-thm-p} are sharp.
\end{theorem}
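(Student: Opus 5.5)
The plan is to follow the Zariski-moduli computation for $x^n+y^n$, extended to $d$ variables. Let $f_0=x_1^n+\dots+x_d^n$, which is weighted homogeneous with weights $1$. Its Milnor number is $\mu(f_0)=(n-1)^d\sim_d n^d$. Positive weight deformations $f=f_0+\sum c_\alpha x^\alpha$ with $|\alpha|>n$ are $\mu$-constant, so $\mu(f)=(n-1)^d$ throughout the family.

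Since $\tau$ is upper semicontinuous in the parameters (Proposition \ref{semi-cont}), a generic choice of the $c_\alpha$ realises $\tau_{min}$. This reduces the statement to the asymptotic $\tau_{min}\sim_d(\mathcal{H}_{\frac{d+1}{2}}(d)-\mathcal{H}_{\frac{d-1}{2}}(d))n^d$. Once that is established, $\mu/\tau_{min}$ tends to the claimed constant, and sharpness of Theorems \ref{main-thm-0} and \ref{main-thm-p} follows.

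For the upper bound on $\tau_{min}$, i.e. the lower bound on $\mu/\tau$, we use $\tau=\mu-\lambda((j(f):f)/j(f))$ from the exact sequence in Proposition \ref{conj-j-closure-p-n=2}. We also use the monomial basis of $R/j(f_0)$, namely $\{x^a: 0\le a_i\le n-2\}$, graded by degree. The idea is that for generic higher-order terms, the multiplication-by-$f$ map on $R/j(f)$ has kernel as large as possible. Because of the filtration by degree, $f\cdot x^a$ for $|a|=k$ lands in degrees $\ge k+n+1$. The number of basis monomials of degree $\ge m$ with all $a_i\le n$ is asymptotically $n^d(1-\mathcal{H}_{m/n}(d))$ by Lemma \ref{lemma-number<n}.

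So the rank of multiplication by $f$, restricted to degrees $\le \lambda n$, is at most the smaller of two quantities:
\[
n^d\mathcal{H}_\lambda(d)\quad\text{(source)}\qquad\text{and}\qquad n^d\bigl(1-\mathcal{H}_{\lambda+1}(d)\bigr)\quad\text{(target)}.
\]
Optimising the crossover $\lambda$ by the symmetry $\mathcal{H}_s+\mathcal{H}_{d-s}=1$ gives $\lambda=\frac{d-1}{2}$. The total rank is then at most $n^d(\mathcal{H}_{\frac{d-1}{2}}(d)+1-\mathcal{H}_{\frac{d+1}{2}}(d))$, since source terms of degree $\le \frac{d-1}{2}n$ contribute the source count and higher ones contribute at most the target count. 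Hence $\tau=\mu-\dim\ker$, which equals the rank, is at most $n^d(1-\mathcal{H}_{\frac{d+1}{2}}(d)+\mathcal{H}_{\frac{d-1}{2}}(d))$.

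This is wrong by the complement, so we must be careful: $\tau=\dim R/tj(f)=\mu-\operatorname{rank}(\cdot f)$. The rank bound above gives only a lower bound on $\tau$, namely $\tau\ge n^d(\mathcal{H}_{\frac{d+1}{2}}-\mathcal{H}_{\frac{d-1}{2}})$, which is exactly Theorem \ref{main-thm-0} again. It follows that the real content is the reverse inequality: for generic $c_\alpha$, the multiplication map achieves this maximal rank up to $o(n^d)$.

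For that we follow \cite{Zariski-moduli}, working with the graded pieces $\mathrm{gr}$ of $R/j(f)$ with respect to the $\mathfrak{m}$-adic filtration. The plan is to show that the degree-shifting map $x^a\mapsto$ (leading part of $f x^a$ modulo $j(f_0)$ and previously constructed relations) is injective up to degree $\frac{d-1}{2}n$ and surjective onto degrees $\ge\frac{d+1}{2}n$. This is done degree by degree via a Gröbner/standard-basis style argument, comparing with the Jacobian algebra of $f_0$, which is a complete intersection with Lefschetz-type behaviour: generic linear forms times powers give maximal rank maps in the graded Artinian Gorenstein algebra $K[x]/(x_i^{n-1})$. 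Concretely, we choose the deformation $f=f_0+\ell\, g$ with suitable generic terms, and reduce maximal rank of $\cdot f$ to the strong Lefschetz property of the monomial complete intersection $K[x]/(x_1^{n-1},\dots,x_d^{n-1})$ in characteristic $0$ (Stanley's theorem), for multiplication by a power of a generic linear form across the middle degree.

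The main obstacle is this last step. We must justify that the filtered multiplication map for the actual power series $f$ has rank controlled by its associated graded (Lefschetz) map, and we must handle the $o(n^d)$ error terms near the middle degree. The count itself then follows from Lemma \ref{lemma-number<n}.
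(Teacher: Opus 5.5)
Your corrected plan (the one after you discard the false start ``$\tau=\mu-\dim\ker$, which equals the rank'') is sound, and the step you flag as the main obstacle is routine. Let $M=R/j(f)$, filtered by $F_k=$ image of $\mathfrak m^k$, and let $\phi$ be multiplication by $f$ on $M$.

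\textbf{Closing the flagged step.} The initial forms $nx_i^{n-1}$ of $\partial f/\partial x_i$ form a regular sequence. Hence $\operatorname{gr}M\cong A:=K[x_1,\dots,x_d]/(x_1^{n-1},\dots,x_d^{n-1})$. Its Hilbert function $h_k$ is symmetric and unimodal, with socle degree $D=d(n-2)$.

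The Euler relation gives $f\equiv-\frac1n\sum_{k\ge1}kf_{n+k}\pmod{j(f)}$. So $\phi(F_k)\subseteq F_{k+n+1}$, and $\operatorname{gr}\phi$ is multiplication by $-\frac1n f_{n+1}$ on $A$.

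Since $\operatorname{im}(\operatorname{gr}\phi)\subseteq\operatorname{gr}(\operatorname{im}\phi)$, you get $\operatorname{rank}\phi\ge\operatorname{rank}\operatorname{gr}\phi$. This is exactly the direction you need.

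Take $f=f_0+\ell^{n+1}$ with $\ell$ a generic linear form. If you want to stay in the paper's family $f_u$, truncating $\ell^{n+1}$ to monomials with all exponents $\le n-2$ does not change $\operatorname{gr}\phi$. Stanley's strong Lefschetz theorem then gives $\operatorname{rank}\operatorname{gr}\phi=\sum_k\min(h_k,h_{k+n+1})$.

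No error terms near the middle degree arise. With $k_0=\lfloor\frac{D-n-1}{2}\rfloor$ you get $\tau\le\#\{a\in[0,n-2]^d:\ k_0<|a|\le k_0+n+1\}$. Lemma \ref{lemma-number<n} evaluates this as $(\mathcal{H}_{\frac{d+1}{2}}(d)-\mathcal{H}_{\frac{d-1}{2}}(d))\,n^d$ up to $o(n^d)$.

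Your source/target heuristic is also rigorous. For every positive weight deformation, $\operatorname{rank}\phi\le\dim M/F_{k_0+1}+\dim F_{k_0+n+2}$. Subtracting from $\mu$ gives the same count, so you obtain $\tau_{min}$ exactly without appealing to Theorem \ref{main-thm-0}.

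\textbf{Comparison with the paper.} The paper works on the Tjurina algebra directly, and its steps are:
\begin{itemize}
\item It asserts that the degree-$(n+p+1)$ initial forms of $T=tj(f_u)$ are spanned by the monomials $A_p$ and the products $B_p=\{x^af_{n+1}\}$.
\item It asserts, deferring to Zariski's book, that the coefficient matrices $L_p(u)$ have full rank for generic $u$.
\item It locates the vanishing threshold $p=\frac{d-1}{2}n$ from $\mathcal{H}'_{1+\lambda}(d)=\mathcal{H}'_{\lambda}(d)$ and the symmetry in Corollary \ref{cor-H'}.
\item It sums the graded pieces $\mathfrak m^{n+p+1}/(\mathfrak m^{n+p+2}+T)$.
\end{itemize}
The full-rank claim for $L_p(u)$ is precisely the maximal-rank property of multiplication by $f_{n+1}$ on $A$. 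So both proofs rest on the same phenomenon.

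Your route, on the Milnor algebra with $\tau=\mu-\operatorname{rank}(\cdot f)$, gives a real proof of that claim from a citable theorem. It also sidesteps the paper's statement, given without proof, that $T$ has no initial forms beyond those from $A_p\cup B_p$. One of your inequalities needs only $\operatorname{rank}\phi\ge\operatorname{rank}\operatorname{gr}\phi$ for a single explicit deformation. The other comes from the trivial filtration bound, or from Theorem \ref{main-thm-0}.

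The paper's degree-by-degree bookkeeping mirrors the classical $d=2$ computation. In exchange, it aims at the whole Hilbert function of $\operatorname{gr}(R/tj(f_u))$, not just its length.
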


\begin{proof}
    We omit the lengthy calculation and proof, and only present the main results. One can refer to \cite{Zariski-moduli} p.114-127 for details.
~\\
~\\
\textbf{I.} A basis of $f_0=x_1^n+\dots+x_d^n$ is given by $x_1^{a_1}\dots x_d^{a_d},\ 0 \leq a_i \leq n-2$. Write
\begin{equation}
    \begin{aligned}
        f_u&=x_1^n+\dots+x_d^n+\sum_{\substack{0 \leq a_i \leq n-2,\\ a_1+\dots+a_d \geq n+1}} u_{(a_1,\dots,a_d)}x_1^{a_1}\dots x_d^{a_d}\\
        &=x_1^n+\dots+x_d^n+f_{n+1}+f_{n+2}+\dots,
    \end{aligned}
\end{equation}
where $$f_{n+p+1}=\sum_{\substack{0 \leq a_i \leq n-2,\\ a_1+\dots+a_d= n+p+1}} u_{(a_1,\dots,a_d)}x_1^{a_1}\dots x_d^{a_d},\ p \geq 0.$$
Then the minimal number of positive weight deformations of $f_0$ equals $\dim_K K[[x_1,\dots,x_d]]/tj(f_u)$ for general $u=(u_{(a_1,\dots,a_d)})$. We denote $T=tj(f_u)$. 
\\
~\\
\textbf{II.} Choose $g_{n+p+1}$ a homogeneous polynomial of degree $n+p+1$ and 
$$g_{n+p+1}=P_p^*f_{n+1}+\sum_{i=1}^d Q_{i,p+2}^* x_i^{n-1}, $$ where $P_p^*,Q_{i,p+2}^*$ are homogeneous polynomials of degree $p,p+2$, respectively. 
Write $$P_p^*=\sum_{a_1+\dots+a_d=p}P_{(a_1,\dots,a_d)}^*x_1^{a_1}\dots x_d^{a_d}, \quad Q_{i,p+2}^*=\sum_{a_1+\dots+a_d=p+2}Q_{i,(a_1,\dots,a_d)}^*x_1^{a_1}\dots x_d^{a_d}.$$ Note that if there is a monomial $x_1^{a_1}\dots x_d^{a_d}$ in $P_p^*$ such that $a_i \geq n-1$, we merge this monomial with $Q_{i,p+2}^*$. Thus, every monomial $x_1^{a_1}\dots x_d^{a_d}$ in $P_p^*$ satisfies $a_i \leq n-2$ for every $i$.

Denote $r_{n,p}=\#\{a_1+\dots+a_d=p\mid \forall a_i\in \mathbb{N},a_i \leq n-2\}$. Every $P_{(a_1,\dots,a_d)}^*, Q_{i,(a_1,\dots,a_d)}^* \in K$, and there are $r_{n,p}$ different $P_{(a_1,\dots,a_d)}^*$, $\binom{p+1+d}{d-1}$ different $Q_{i,(a_1,\dots,a_d)}^*$ for each $i$.

For $\iota=(a_1,\dots,a_d)$ such that $a_1+\dots+a_d=n+p+1$, denote $x^{(p)}_{\iota}=x_1^{a_1}\dots x_d^{a_d}$. Note that there are $\binom{n+p+d}{d-1}$ such $\iota$. Denote $l^{(p)}_{\iota}$ the coefficient of $x^{(p)}_\iota$ in $g_{n+p+1}$. Denote
\begin{equation}
    \begin{aligned}
        l^{(p)}_{\iota}&=\sum l^{(p)}_{\iota,(a_1,\dots,a_d)}(u)P^*_{(a_1,\dots,a_d)}\\
        &+\sum l^{(p)}_{\iota,r_{n,p}+(a_1,\dots,a_d)}(u)Q_{1,(a_1,\dots,a_d)}^*+\sum l^{(p)}_{\iota,r_{n,p}+\binom{p+1+d}{d-1}+(a_1,\dots,a_d)}(u)Q_{2,(a_1,\dots,a_d)}^*+\dots\\
        &+\sum l^{(p)}_{\iota,r_{n,p}+\binom{p+1+d}{d-1}\cdot (d-1)+(a_1,\dots,a_d)}(u)Q_{d,(a_1,\dots,a_d)}^*.
    \end{aligned}
\end{equation}
Denote $L_p(u)$ the $r_{n,p} \times \binom{n+p+d}{d-1}$ matrix given by $\bigg(l^{(p)}_{\iota,(a_1,\dots,a_d)}(u)\bigg)$ and $L_p^*(u)$ the $(r_{n,p}+d \cdot \binom{p+1+d}{d-1}) \times \binom{n+p+d}{d-1}$ matrix given by $$\bigg(l^{(p)}_{\iota,(a_1,\dots,a_d)}(u),l^{(p)}_{\iota,r_{n,p}+(a_1,\dots,a_d)}(u),\dots,l^{(p)}_{\iota,r_{n,p}+\binom{p+2+d}{d-1}\cdot (d-1)+(a_1,\dots,a_d)}(u)\bigg).$$
We can show that $L_p(u),L^*_p(u)$ are of full rank. That is,
$$\mathrm{rank}L_p(u)=r_{n,p}=\#\{a_1+\dots+a_d=p\mid \forall a_i\in \mathbb{N},a_i \leq n-2\},$$
$$\mathrm{rank}L^*_p(u)=\#\{a_1+\dots+a_d=p\mid \forall a_i\in \mathbb{N},a_i \leq n-2\}+d \cdot \binom{p+1+d}{d-1}. $$
\\
~\\
\textbf{III.} Every homogeneous polynomial $g_{n+p+1} \in \mathfrak{m}^{n+p+2}+T$ of degree $n+p+1$ has the form 
\begin{equation}\label{g-P-Q}
    g_{n+k+1}=P_p^*f_{n+1}+\sum_{i=1}^d Q_{i,p+2}^* x_i^{n-1}.
\end{equation}
Conversely, every homogeneous polynomial $g_{n+p+1}$ of degree $n+p+1$ with the form (\ref{g-P-Q}) belongs to $\mathfrak{m}^{n+p+2}+T$. Therefore, $$B_p=\{x_1^{a_1}\dots x_d^{a_d}f_{n+1} \mid a_1+\dots+a_d=p\}$$ along with monomials $$A_p=\{x_1^{a_1}\dots x_d^{a_d}x_i^{n-1} \mid a_1+\dots+a_d=p+2\} $$ form the basis of the vector space of homogeneous polynomials $g_{n+p+1}$ that belong to $\mathfrak{m}^{n+p+2}+T$. There remains $\binom{n+p+d}{d-1}-\#A_p$ monomials of degree $n+p+1$, while each element in $B_p$ induces a relation between these monomials modulo $\mathfrak{m}^{n+p+2}+T$. Therefore, the number of bases of $\mathfrak{m}^{n+p+1}/(\mathfrak{m}^{n+p+2}+T)$ is $\binom{n+p+d}{d-1}-\#A_p-\mathrm{rank}L_p(u)$.
\\
~\\
\textbf{IV.} Let $n \rightarrow \infty$. Then $\mathfrak{m}^{n+p+2} \subset T$ if 
\begin{equation}\label{eq-m-in-J}
    \binom{n+p+d}{d-1} \sim_{d-1} \#A_p+\mathrm{rank}L_p(u).
\end{equation}
Note that 
\begin{equation}
    \begin{aligned}
        \#A_p&=\#\{a_1+\dots+a_d=n+p+1 \mid \exists a_i \geq n-1\}\\
        &=\#\{a_1+\dots+a_d=n+p+1 \mid \forall a_i \in \mathbb{N}\}-\#\{a_1+\dots+a_d=n+p+1 \mid \forall a_i \leq n-2\}.
    \end{aligned}
\end{equation}
Let $p=\lambda n$. By Lemma \ref{lemma-number} and \ref{lemma-number<n}, (\ref{eq-m-in-J}) is equivalent to 
\begin{equation}\label{eq-m-in-J-lambda}
    \mathcal{H}'_{1+\lambda}(d)=\mathcal{H}'_{\lambda}(d).
\end{equation}
We have $\mathcal{H}'_s(d)=\mathcal{H}'_{d-s}(d)$ by Corollary \ref{cor-H'} and $\mathcal{H}'_s(d)$ has the only local maximum by the proof of Theorem \ref{main-thm-p}. Hence, $\lambda+\lambda+1=d$ and $\lambda=\frac{d-1}{2}$.

Therefore, we have $\mathfrak{m}^{n+p+1} \in T$ for $p > \frac{d-1}{2} n$, and

\begin{equation}
    \begin{aligned}
        \dim_K K[[x_1,\dots,x_d]]/T &\sim_d \dim_K K[[x_1,\dots,x_d]]/\mathfrak{m}^{n+1}+\sum_{0 \leq p \leq \frac{d-1}{2} n} \dim_K \mathfrak{m}^{n+p+1}/(\mathfrak{m}^{n+p+2}+T)\\
        & \sim_d \frac{n^d}{d!}+\sum_{0 \leq p \leq \frac{d-1}{2} n} \bigg(\frac{(n+p)^{d-1}}{(d-1)!}-\#A_p-\mathrm{rank}L_p(u)\bigg)\\
        &\sim_d \frac{n^d}{d!}+n^d(\mathcal{H}_{\frac{d+1}{2}}(d)-\mathcal{H}_1(d))-\mathcal{H}_{\frac{d-1}{2}}(d) \cdot n^d\\
        &\sim_d (\mathcal{H}_{\frac{d+1}{2}}(d)-\mathcal{H}_{\frac{d-1}{2}}(d))\cdot n^d.
    \end{aligned}
\end{equation}

That is, $$\tau_{min}\sim_d (\mathcal{H}_{\frac{d+1}{2}}(d)-\mathcal{H}_{\frac{d-1}{2}}(d))\cdot n^d.$$
Since $\mu(f_u) \sim_d n^d$, we have $$\frac{\mu}{\tau_{min}} \rightarrow \frac{1}{\mathcal{H}_{\frac{d+1}{2}}(d)-\mathcal{H}_{\frac{d-1}{2}}(d)}.$$

Thus, the bound in Theorem \ref{main-thm-0} is sharp. As we can obtain every positive characteristic field by reduction modulo $p$, the bound in Theorem \ref{main-thm-p} is also sharp.
\end{proof}

Thus, we have completely solved an open problem of P. Almir\'{o}n \cite[Problem 1]{quotient-4/3} for isolated hypersurface singularities as follows:

\begin{corollary}
    Let $(X,0)\subset (\mathbb{C}^d,0)$ be an isolated complete intersection singularity of codimension $1$. Then $\frac{1}{\mathcal{H}_{\frac{d+1}{2}}(d)-\mathcal{H}_{\frac{d-1}{2}}(d)}$ is the optional rational number such that $$\frac{\mu}{\tau} \leq \frac{1}{\mathcal{H}_{\frac{d+1}{2}}(d)-\mathcal{H}_{\frac{d-1}{2}}(d)},$$ where optimal means that there exists a family of singularities such that $\frac{\mu}{\tau}$ tends to $\frac{1}{\mathcal{H}_{\frac{d+1}{2}}(d)-\mathcal{H}_{\frac{d-1}{2}}(d)}$ when the multiplicity at the origin tends to infinity.
\end{corollary}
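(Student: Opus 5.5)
The final corollary will follow by combining two results already in the paper: the upper bound of Theorem \ref{main-thm-0}, specialized to $K=\mathbb{C}$, and the sharpness statement of the last theorem of the previous section. We first check that the setup matches. An isolated complete intersection singularity of codimension $1$ in $(\mathbb{C}^d,0)$ is an isolated hypersurface singularity $f\in\mathbb{C}[[x_1,\dots,x_d]]$, with $\mu$ and $\tau$ defined via $j(f)$ and $tj(f)$ as in (\ref{def-mu-tau}). Applying Theorem \ref{main-thm-0} with $n=d$ gives
$$\frac{\mu}{\tau}\le \frac{1}{\mathcal{H}_{\frac{d+1}{2}}(d)-\mathcal{H}_{\frac{d-1}{2}}(d)}.$$
By the explicit formula $\mathcal{H}_s(d)=\sum_{i\le\lfloor s\rfloor}\frac{(-1)^i}{d!}\binom{d}{i}(s-i)^d$, the right-hand side is rational whenever $s\in\frac12\mathbb{Z}$. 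So the bound is a rational number.

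For optimality, we use the family $f_u=x_1^n+\dots+x_d^n+(\text{higher terms})$ with generic positive-weight coefficients $u$. The previous theorem shows that $\tau(f_u)=\tau_{min}\sim_d(\mathcal{H}_{\frac{d+1}{2}}(d)-\mathcal{H}_{\frac{d-1}{2}}(d))n^d$. Since $f_u$ is a semi-quasihomogeneous deformation of $f_0$, we have $\mu(f_u)=\mu(f_0)=(n-1)^d\sim_d n^d$. The multiplicity of $f_u$ at the origin is exactly $n$, because the initial form is $x_1^n+\dots+x_d^n$. Hence $\mu/\tau\to 1/(\mathcal{H}_{\frac{d+1}{2}}(d)-\mathcal{H}_{\frac{d-1}{2}}(d))$ as the multiplicity tends to infinity.

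Finally, optimality in the sense of the statement means no smaller constant $c$ can work. If $c$ were strictly smaller than the bound, then for $n$ large the family $f_u$ would violate $\mu/\tau\le c$, by the limit just established. This completes the argument.

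The only step needing care is the sharpness input, which rests on the asymptotic count of $\tau_{min}$. In particular it relies on the full-rank claims for $L_p(u)$ for generic $u$, and on the identification of the threshold $\lambda=\frac{d-1}{2}$, which comes from the symmetry $\mathcal{H}'_s(d)=\mathcal{H}'_{d-s}(d)$ in Corollary \ref{cor-H'}. Both are taken from the preceding theorem. Beyond that, the corollary is a direct assembly of the two results.
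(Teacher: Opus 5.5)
Your proposal is correct and follows the paper's route. The corollary is just the assembly of Theorem~\ref{main-thm-0} (with $n=d$, $K=\mathbb{C}$) and the sharpness theorem of the last section, applied to the generic positive-weight deformations $f_u$ of $x_1^n+\dots+x_d^n$. These have multiplicity $n$, $\mu(f_u)=(n-1)^d$, and $\tau_{min}\sim_d(\mathcal{H}_{\frac{d+1}{2}}(d)-\mathcal{H}_{\frac{d-1}{2}}(d))n^d$. Your extra remarks on rationality of the bound and on why no smaller constant can work only make explicit what the paper leaves implicit.
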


\newcommand{\etalchar}[1]{$^{#1}$}

\end{document}